\newcommand{\stkout}[1]{\ifmmode\text{\sout{\ensuremath{#1}}}\else\sout{#1}\fi}
\newcommand{\ignore}[1]{}
\newcommand{\Psieps}{\Psi_{\varepsilon}}
\def\@noindentfalse{\global\let\if@noindent\iffalse}
\def\@noindenttrue {\global\let\if@noindent\iftrue}
\def\@aftertheorem{%
  \@noindenttrue
  \everypar{%
    \if@noindent%
      \@noindentfalse\clubpenalty\@M\setbox\z@\lastbox%
    \else%
      \clubpenalty \@clubpenalty\everypar{}%
    \fi}}
\theoremstyle{plain}
\newtheorem{theorem}{Theorem}[section]
\newtheorem{proposition}[theorem]{Proposition}
\newtheorem{lemma}[theorem]{Lemma}
\theoremstyle{definition}
\newtheorem{remark}[theorem]{Remark}
\newtheorem{definition}[theorem]{Definition}
\newcommand*{\colorboxed}{}
\def\colorboxed#1#{%
  \colorboxedAux{#1}%
}
\newcommand*{\colorboxedAux}[3]{%
  % #1: optional argument for color model
  % #2: color specification
  % #3: formula
  \begingroup
    \colorlet{cb@saved}{.}%
    \color#1{#2}%
    \boxed{%
      \color{cb@saved}%
      #3%
    }%
  \endgroup
}
\renewcommand*{\backref}[1]{\ifx#1\relax \else Page #1 \fi}
\renewcommand*{\backrefalt}[4]{%
  \ifcase #1 \footnotesize{(Not cited.)}%
  \or        \footnotesize{(Cited on page~#2.)}%
  \else      \footnotesize{(Cited on pages~#2.)}%
  \fi
}
\def\be#1{\begin{equation*}#1\end{equation*}}
\def\ben#1{\begin{equation}#1\end{equation}}
\def\bes#1{\begin{equation*}\begin{split}#1\end{split}\end{equation*}}
\def\besn#1{\begin{equation}\begin{split}#1\end{split}\end{equation}}
\def\ba#1{\begin{align*}#1\end{align*}}
\def\ban#1{\begin{align}#1\end{align}}
\def\given{\typeout{Command 'given' should only be used within bracket command}}
\newcounter{@bracketlevel}
\def\@bracketfactory#1#2#3#4#5#6{
\expandafter\def\csname#1\endcsname##1{%
\addtocounter{@bracketlevel}{1}%
\global\expandafter\let\csname @middummy\alph{@bracketlevel}\endcsname\given%
\global\def\given{\mskip#5\csname#4\endcsname\vert\mskip#6}\csname#4l\endcsname#2##1\csname#4r\endcsname#3%
\global\expandafter\let\expandafter\given\csname @middummy\alph{@bracketlevel}\endcsname
\addtocounter{@bracketlevel}{-1}}%
}
\def\bracketfactory#1#2#3{%
\@bracketfactory{#1}{#2}{#3}{relax}{1mu plus 0.25mu minus 0.25mu}{0.6mu plus 0.15mu minus 0.15mu}
\@bracketfactory{b#1}{#2}{#3}{big}{1mu plus 0.25mu minus 0.25mu}{0.6mu plus 0.15mu minus 0.15mu}
\@bracketfactory{bb#1}{#2}{#3}{Big}{2.4mu plus 0.8mu minus 0.8mu}{1.8mu plus 0.6mu minus 0.6mu}
\@bracketfactory{bbb#1}{#2}{#3}{bigg}{3.2mu plus 1mu minus 1mu}{2.4mu plus 0.75mu minus 0.75mu}
\@bracketfactory{bbbb#1}{#2}{#3}{Bigg}{4mu plus 1mu minus 1mu}{3mu plus 0.75mu minus 0.75mu}
}
\newcounter{ctr}\loop\stepcounter{ctr}\edef\X{\@Alph\c@ctr}%
\edef\csname s\X\endcsname{\noexpand\mathscr{\X}}
\edef\csname c\X\endcsname{\noexpand\mathcal{\X}}
\edef\csname b\X\endcsname{\noexpand\boldsymbol{\X}}
\edef\csname I\X\endcsname{\noexpand\mathbbm{\X}}
\edef\csname r\X\endcsname{\noexpand\mathrm{\X}}
\let\@IE\IE\let\IE\undefined
\newcommand{\IE}{\mathop{{}\@IE}\mathopen{}}
\let\@IP\IP\let\IP\undefined
\newcommand{\IP}{\mathop{{}\@IP}}
\def\now{%
\minute=\time%
\hour=\time \divide \hour by 60%
\hourMins=\hour \multiply\hourMins by 60%
\advance\minute by -\hourMins%
\zeroPadTwo{\the\hour}:\zeroPadTwo{\the\minute}%
}
\def\zeroPadTwo#1{\ifnum #1<10 0\fi#1}
\numberwithin{equation}{section}
\renewcommand\section{\@startsection {section}{1}{\z@}%
{-3.5ex \@plus -1ex \@minus -.2ex}%
{1.3ex \@plus.2ex}%
{\center\small\sc\mathversion{bold}\MakeUppercase}}
\def\subsection#1{\@startsection {subsection}{2}{0pt}%
{-3.5ex \@plus -1ex \@minus -.2ex}%
{1ex \@plus.2ex}%
{\bf\mathversion{bold}}{#1}}
\def\subsubsection#1{\@startsection{subsubsection}{3}{0pt}%
{\medskipamount}%
{-10pt}%
{\normalsize\itshape}{\kern-2.2ex. #1.}}
\def\blfootnote{\xdef\@thefnmark{}\@footnotetext}
\def\note#1{\par\smallskip%
\noindent%
\llap{$\boldsymbol\Longrightarrow$}%
\fbox{\vtop{\hsize=0.98\hsize\parindent=0cm\small\rm #1}}%
\rlap{$\boldsymbol\Longleftarrow$}%
\par\smallskip}
\renewcommand{\cite}{\citet}
\def\^#1{\ifmmode {\mathaccent"705E #1} \else {\accent94 #1} \fi}
\def\~#1{\ifmmode {\mathaccent"707E #1} \else {\accent"7E #1} \fi}
\edef\-#1{\noexpand\ifmmode {\noexpand\bar{#1}} \noexpand\else \-#1\noexpand\fi}
\def\>#1{\vec{#1}}
\def\.#1{\dot{#1}}
\def\atop{\@@atop}
\renewcommand{\leq}{\leqslant}
\renewcommand{\geq}{\geqslant}
\renewcommand{\phi}{\varphi}
\newcommand{\eps}{\varepsilon}
\newcommand{\eq}{\eqref}
\newcommand{\bigo}{\mathrm{O}}
\newcommand{\lito}{\mathrm{o}}
\newcommand{\Var}{\mathop{\mathrm{Var}}\nolimits}
\newcommand{\law}{\mathscr{L}}
\def\sp#1{^{(#1)}}
\newcommand{\Cov}{\mathrm{Cov}}
\newcommand{\diam}{\mathrm{diam}}
\newcommand{\dist}{\mathtt{d}}
\newcommand{\Lip}{\mathrm{Lip}}
\def\nr#1{\textcolor{black}{#1}}
\begin{document}

\title{\sc\bf\large\MakeUppercase{
Gaussian random field approximation via Stein's method with applications 
to wide random neural networks
}
}

\author{
  \begin{tabular}{c@{\hskip 0.5in}c}
 Krishnakumar Balasubramanian &  Larry Goldstein\\
\it University of California, Davis & \it University of Southern California \\
\texttt{kbala@ucdavis.edu} &\texttt{larry@usc.edu} \\
\\
 Nathan Ross & Adil Salim \\
\it University of Melbourne & \it Microsoft Research\\
\texttt{nathan.ross@unimelb.edu.au} & \texttt{adilsalim@microsoft.com}
\end{tabular}
}
\date{\today}
%\author{\sc 
%Krishnakumar Balasubramanian, Larry Goldstein, \\ \sc Nathan Ross, and Adil Salim}
%\date{\it University of California Davis, University of Southern California,  University~of~Melbourne, and Microsoft Research
%}

\maketitle
\begin{abstract}
We derive upper bounds on the Wasserstein distance ($W_1$), with respect to $\sup$-norm, between any continuous $\mathbb{R}^d$ valued random field indexed by the $n$-sphere and the Gaussian, based on Stein's method. We develop a novel Gaussian smoothing technique that allows us to transfer a bound in a smoother metric to the $W_1$ distance. The smoothing is based on covariance functions constructed using powers of Laplacian operators, designed so that the associated Gaussian process has a tractable Cameron-Martin or Reproducing Kernel Hilbert Space. This feature enables us to move beyond one dimensional interval-based index sets that were previously considered in the literature. Specializing our general result, we obtain the first bounds on the Gaussian random field approximation of wide random neural networks of any depth and Lipschitz activation functions at the random field level. Our bounds are explicitly expressed in terms of the widths of the network and moments of the random weights. We also obtain tighter bounds when the activation function has three bounded derivatives. \\

\noindent \textbf{Keywords:} Distributional approximation, Gaussian random field, Stein's method, Laplacian-based smoothing, Deep neural networks.
\end{abstract}

\tableofcontents

\section{Introduction}

Random fields that arise in a variety of applications related to deep learning~\citep{Neal1996,lee2018deep,Matthews2018,yang2019wide,Hanin2021} and stochastic optimization~\citep{benveniste2012adaptive,sirignano2020mean,chen2020dynamical,rotskoff2022trainability,balasubramanian2023high} can exhibit limiting Gaussian behavior, rigorously understood through the theory of weak convergence. Combining this asymptotic behavior with the comprehensive theory of Gaussian random fields leads to insights about the qualitative and quantitative behavior of the random field of interest. In order to justify the accuracy of the approximation of quantities of interest by those of their limits, it is important to quantify the error in the Gaussian random field approximation. Indeed, in the standard multivariate central limit theorem,  Berry-Esseen bounds precisely determine when the Gaussian behavior ``kicks-in''. Our main goal in this work is to develop such quantitative Berry-Esseen-type bounds for Gaussian random field approximations via Stein's method. We focus in particular on bounds in the Wasserstein metric ($W_1$) with respect to sup-norm, and highlight that convergence of these bounds to zero implies asymptotic weak convergence.  Moreover, such bounds immediately imply Wasserstein bounds between important statistics of the fields, such as finite-dimensional distributions and extrema.

Stein's method has been extensively developed to provide quantitative distributional approximation bounds in both the Gaussian and non-Gaussian settings; we refer to~\cite*{chen2011normal,ross2011fundamentals,nourdin2012normal} for a detailed treatment of the former. Recent works (see, for example, \citet[Section 1.1]{Barbour2021}) have focused on developing Stein's method to derive Gaussian process approximations results. These works pertain to random process indexed by the interval $[0,T]$, for some $T< \infty$. As is common in Stein's method, bounds are first developed in some ``smooth'' metric and are then transferred to the metric of interest, such as the Wasserstein, L\'evy-Prokhorov or Kolmogorov metrics, via various smoothing techniques.

For instance,~\citet[Lemma 1.10]{Barbour2021} develops an infinite-dimensional
analog of a widely-used finite-dimensional Gaussian smoothing technique. Based on this foundation, the authors establish Gaussian process approximation bounds for processes indexed by the interval $[0,T]$, in the $W_1$ and L\'evy-Prokhorov metrics. However, their smoothing technique is restricted to random processes indexed by some subset of the real line, as  it relies on a detailed understanding of the Cameron-Martin space of one-dimensional Brownian motion. As there are no canonical Gaussian random fields \emph{indexed} by more general sets, e.g., the $n$-sphere, which have  explicit Cameron-Martin spaces, new ideas are required  to adapt these smoothing techniques to this setting.

A main contribution of this work is the development of a novel smoothing technique which can be used in conjunction with Stein's method to derive Gaussian random field approximation bounds in the $W_1$ metric. The smoothing technique is based on the construction of a Gaussian random field with an explicit Cameron-Martin space via Laplacian operators. Though we focus on the case of random fields indexed by the $n$-sphere $\cS^n$, our approach is generally applicable to random fields indexed by any compact metric measure space $\cM$, subject to increased technical complexity.   

We apply our general result to derive quantitative bounds for the $W_1$ distance between the output of a wide random neural network indexed by inputs in $\cS^n$ and the corresponding Gaussian random field. Though wide random neural networks produce highly complicated random fields, such bounds allow them to be studied via their more tractable limiting Gaussian behavior. In the one hidden layer case, \cite{Neal1996} argues that wide random neural networks asymptotically behave as Gaussian random fields. The works of \cite{Matthews2018} and \cite{lee2018deep} give heuristic and empirical evidence that general depth neural networks exhibit Gaussian random field limits. Very recently, \cite{Hanin2021} proves that deep neural networks converge weakly to appropriately defined Gaussian random fields as the layer widths tend to infinity. At a high-level, one proceeds here by first establishing convergence of finite dimensional distributions, which typically follows directly from the multivariate CLT. Weak convergence then follows from tightness results. In a different but related direction, \cite{Li2022} provide a  characterization of the limiting covariance matrix of the output of the neural network when evaluated at a finite-set of points, as the depth and width tends to infinity at the same rate. 

From a quantitative point of view, the question of how wide a random neural network has to be in order that the limiting Gaussian random field provides a good approximation is left unanswered by results that only demonstrate weak convergence. Works that addresses this gap include~\cite{Eldan2021, Basteri2022, Klukowski2022,bordino2023},  discussed in more detail in Section~\ref{sec:comparison}. However, results currently known to us have at least one of the following drawbacks: they (i) work in weaker topologies, such as Wasserstein metrics with respect to integral (e.g., $L^2$) distances, rather than the $\sup$-norm, (ii) only provide approximation bounds for finite dimensional distributions, and not at the random field level, (iii) require Gaussian or similar restrictive assumptions on the random weights, (iv) consider special cases like one hidden-layer neural networks or use restricted activation functions, such as polynomials. In contrast, our work provides precise quantitative bounds for the error in approximating wide random neural networks with Gaussian random fields, without any of the above-mentioned restrictions. 

In the remainder of the introduction, we state and discuss our main results.  Section \ref{sec:master} is devoted to our smoothing result, Theorem~\ref{thm:strongermetric}. Section~\ref{sec:appdnn} contains our Gaussian approximation results for wide random neural networks, Theorems~\ref{thm:w1bound} and~\ref{thm:rateimprovment}.

\begin{table}[t]
    \centering
    \begin{tabular}{|c|c|}
    \hline
      Notation   & Description \\ \hline \hline 
      $(\cM,\dist, \nu)$ & Metric space $(\cM,\dist)$ equipped with a measure $\nu$ \\ \hline
      $\mathcal{S}^n$ & $n$-sphere \\ \hline
      $\mathrm{C}(\cM;\IR^d)$ & Banach space of continuous functions equipped with $\sup$-norm \\ \hline
    $(\eps,\delta)$ & Regularization and smoothing parameters respectively\\ 
       \hline
       $F,H$  & Random fields in $\mathrm{C}(\cM;\IR^d)$\\\hline
       $G$ & Gaussian Random Field used to approximate $F\in \mathrm{C}(\cM;\IR^d)$ \\\hline
       $S$ & Smoothing Gaussian Random Field \\ \hline
      $\mathsf{d}_{\cH}(F,H)$ & Integral probability metric over a class of test functions $\mathcal{H}$\\\hline
      $D^k$ & $k$-th order Fr\'echet derivative\\ 
       \hline
    \end{tabular}
    \caption{Summary of some main notations used.}
 %   \label{tab:my_label}
\end{table}

\subsection{Bounds for random field approximations}\label{sec:master}
We now formally describe our setting and main result. Consider a compact metric space $(\cM,\dist)$, equipped with a finite Borel measure $\nu$ that is positive on open balls. Let $\mathrm{C}(\cM;\IR^d)$ denote the (separable) Banach space of continuous functions $f:\cM\to \IR^d$, equipped with the $\sup$-norm $\norm{f}_\infty:=\sup_{x\in \cM} \norm{f(x)}_2$, where $\norm{\cdot}_2$ is the usual Euclidean norm in $\IR^d$. For two random fields $F,H \in \rC(\cM;\mathbb{R}^d)$, we are interested in the distributional approximation of the random field $F$ by $H$ in appropriate distances, which we introduce next.
 
For a function $\zeta:\mathrm{C}(\cM;\IR^d)\to \IR$, we denote taking Fr\'echet derivatives by $D, D^2,\ldots$, and let the operator norm $\norm{\cdot}$ be defined for a $k(\geq 1)$-linear form $A$ on $\mathrm{C}(\cM;\IR^d)$
by $\norm{A}:=\sup_{\norm{f}_\infty=1} \abs{A[f,\ldots,f]}$. The (integral) probability distances we consider are given by the supremum of the differences $|\IE \zeta(F) - \IE \zeta(H)|$ taken over all functions in some class~$\cH$ 
of \emph{test functions} that map $\mathrm{C}(\cM;\IR^d) \to \IR$:  
\be{
  \mathsf{d}_{\cH}(F,H) \coloneqq  \sup_{\zeta\in\cH}\babs{\IE\cls{\zeta(F)}- \IE\cls{ \zeta(H)}}. 
}
In particular, we are interested in the case where the role of $\cH$ is played by
\be{
 \cW \coloneqq   
 \left\{ \zeta:\rC(\cM;\IR^d) \rightarrow \mathbb{R} : \sup_{f \not = h}\frac{\abs{\zeta(f)-\zeta(h)}}{\norm{f-h}_\infty}\leq 1 \right\},
}
the class of $1$-Lipschitz functions, in which case the distance is called as the Wasserstein metric ($W_1$), denoted by $\mathsf{d}_\cW(F,H)$; convergence in this metric is known to imply weak convergence in (the Polish space) $\clr{\mathrm{C}(\cM;\IR^d), \norm{\cdot}_\infty}$; see  \cite[Theorem~11.3.3]{Dudley2002}.

To proceed, we introduce the following weaker metric based on the class of ``smooth'' test functions
%\be{
%\cF:=\bclc{\zeta:\mathrm{C}(\cM; \IR^d)\to \IR, \sup_{f\in \mathrm{C}(\cM;\IR^d)} \norm{D^k \zeta(f)} <\infty, \, k=1,2,3}.
%}
\begin{align} \label{def:calF}
\cF \coloneqq \bclc{\zeta:\rC(\cM; \IR^d)\to \IR:\, \sup_f\norm{D^k \zeta(f)} \leq 1, k=1,2; %\,\,\text{and}\,
\, \sup_{f\not=h}\frac{\norm{D^2\zeta(f)-D^2\zeta(h)}}{\norm{f-h}_\infty}\leq\,1 }.
%\cF \coloneqq\\ \bclc{\zeta:\rC(\cM; \IR^d)\to \IR;\, \sup_f\norm{D^k \zeta(f)} <\infty, k=1,2;\,\norm{D^2\zeta(f)-D^2\zeta(h)}\leq \norm{f-h}_\infty}.
%\cF&\coloneqq \bclc{\zeta\in \cF: \sup_{f} \norm{D^3 \zeta(f)}\leq1}.
\end{align}
The metric $\mathsf{d}_{\cF}$ is well-suited to Stein's method, but, in contrast to analogous metrics in the finite dimensional case, it does not directly imply weak convergence, or provide bounds on more informative metrics such as the Wasserstein or L\'evy-Prokhorov. Conceptually speaking, this disconnect can occur because it is not established that the test functions in $\cF$ capture tightness, and practically speaking, it can occur because the technical tools used in finite dimensions (approximation by smoother functions and boundary measure inequalities) do not generally directly carry over to infinite dimensions. Using our novel Laplacian-based smoothing method, we non-trivially adapt the techniques of \cite{Barbour2021}, and prove the following general approximation result in the $W_1$ metric for random fields indexed by the sphere.

\begin{theorem}\label{thm:strongermetric}[Master Theorem]
 Let $F,H \in \rC(\mathcal{S}^n;\mathbb{R}^d)$ be random fields, where $\mathcal{S}^n$ is the unit sphere in $\mathbb{R}^{n+1}$ for some finite integer $n$.
Then for any $\eps,\delta \in(0,1)$ and $\iota>0$,
\begin{align}\label{eq:masterresult}
   \mathsf{d}_\cW(F,H) \leq C \Big( d\,\delta^{-2} \eps^{-2(n+\iota)}~\underset{\mathsf{Section}~\ref{sec:firstterm}}{\colorboxed{red}{\mathsf{d}_{\cF}(F,H)}} \,+\,\underset{\mathsf{Section}~\ref{sec:2and3terms}}{\colorboxed{purple}{\IE \norm{F-F_\eps}_\infty}  + \colorboxed{purple}{ \IE \norm{H-H_\eps}_\infty}}\,+\,\underset{\mathsf{Section}~\ref{sec:wass}}{\colorboxed{magenta}{\delta \sqrt{d}}}
   \Big),
\end{align}
%\begin{align}\label{eq:masterresult}
%   \mathsf{d}_\cW(F,H) \leq C \Big( d~\delta^{-2} \eps^{-2(n+\iota)}~\underset{\mathsf{Section}~\ref{sec:firstterm}}{\colorboxed{blue}{\mathsf{d}_{\cF}(F,H)}} ~+~\underset{\mathsf{Section}~\ref{sec:2and3terms}}{\colorboxed{purple}{\IE \norm{F-F_\eps}_\infty}  + \colorboxed{purple}{ \IE \norm{H-H_\eps}_\infty}}~+~\delta \sqrt{d} \Big),
%\end{align}
where $F_\eps$ and $H_\eps$ are $\eps$-regularizations of $F$ and $H$ defined at~\eq{eq:genrep} below, and $C$ is a constant depending only on $n$ and $\iota$.
\end{theorem}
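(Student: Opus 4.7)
My plan is to follow the classical Stein-type smoothing recipe, specialised to random fields on $\cS^n$ by introducing a Laplacian-based Gaussian smoother $S$, taken independent of $F$ and $H$. I will choose the covariance of $S$ to be built from a sufficiently large negative power of the Laplace-Beltrami operator on $\cS^n$, so that $S$ has continuous sample paths and its Cameron-Martin space is a Sobolev space $H^s(\cS^n;\IR^d)$ with $s>n/2$, with an explicit spherical-harmonic representation that will permit integration by parts. For any fixed $\zeta\in\cW$, I would insert $F_\eps$, $H_\eps$ and $\pm\delta S$ to decompose
\begin{align*}
\IE\zeta(F)-\IE\zeta(H) &= \IE[\zeta(F)-\zeta(F_\eps)] + \IE[\zeta(F_\eps)-\zeta(F_\eps+\delta S)] \\
&\quad + \IE[\zeta(F_\eps+\delta S)-\zeta(H_\eps+\delta S)] \\
&\quad + \IE[\zeta(H_\eps+\delta S)-\zeta(H_\eps)] + \IE[\zeta(H_\eps)-\zeta(H)].
\end{align*}
The Lipschitz property of $\zeta$ will bound the first and last summands by $\IE\|F-F_\eps\|_\infty$ and $\IE\|H-H_\eps\|_\infty$ (the purple terms), and the second and fourth by $\delta\,\IE\|S\|_\infty$, which a standard maximal inequality for $\IR^d$-valued continuous Gaussian fields (with the covariance normalised so that the Cameron-Martin scale is $O(1)$) will control by $\lesssim \delta\sqrt{d}$ (the magenta term).

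The core of the proof is the middle summand. I would define $\Psi:\rC(\cS^n;\IR^d)\to\IR$ by
\be{
\Psi(f):=\IE[\zeta(\cR_\eps f+\delta S)],
}
where $\cR_\eps$ is the linear regularisation operator implicit in $F_\eps=\cR_\eps F$ from~\eqref{eq:genrep}. Then the middle summand equals $\IE\Psi(F)-\IE\Psi(H)$, and by the definition of $\mathsf{d}_\cF$ in~\eqref{def:calF},
\be{
|\IE\Psi(F)-\IE\Psi(H)|\leq \|\Psi\|_\cF\cdot \mathsf{d}_\cF(F,H),
}
where $\|\Psi\|_\cF$ denotes the maximum of $\sup_f\|D\Psi(f)\|$, $\sup_f\|D^2\Psi(f)\|$ and the Lipschitz constant of $D^2\Psi$. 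It thus suffices to prove $\|\Psi\|_\cF\lesssim d\,\delta^{-2}\eps^{-2(n+\iota)}$. To do so I would differentiate $\Psi$ in directions $h_1,h_2\in\rC(\cS^n;\IR^d)$ and invoke the Cameron-Martin integration-by-parts identity for $S$; each derivative transfers to a factor $\delta^{-1}\|\cR_\eps h\|_{\mathrm{CM}}$, where $\|\cdot\|_{\mathrm{CM}}$ is the Cameron-Martin (Sobolev) norm associated with $S$, and two derivatives together produce $\delta^{-2}\|\cR_\eps h_1\|_{\mathrm{CM}}\|\cR_\eps h_2\|_{\mathrm{CM}}$. To return to the $\sup$-norm required by $\cF$, I would use the key quantitative estimate that $\cR_\eps$ is bounded from $(\rC(\cS^n;\IR^d),\|\cdot\|_\infty)$ into the Cameron-Martin space with operator norm $\lesssim \eps^{-(n+\iota)}$ for any $\iota>0$; the factor $d$ then appears from estimating $\IR^d$-valued Gaussian expectations of $S$. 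The Lipschitz modulus of $D^2\Psi$ would be bounded by a parallel computation using only that $\zeta$ is Lipschitz, yielding the same dependence on $\eps$ and $\delta$.

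The hard part will be calibrating the Laplacian power defining $S$ jointly with the regulariser $\cR_\eps$ so that the exponent $2(n+\iota)$ is sharp and that simultaneously (i) the Cameron-Martin space of $S$ embeds continuously into $\rC(\cS^n;\IR^d)$ (so that $S$ has continuous paths and the Cameron-Martin integration by parts is legitimate), and (ii) $\cR_\eps$ maps the $\sup$-norm unit ball into a ball of radius $\lesssim \eps^{-(n+\iota)}$ in that Cameron-Martin space. Both requirements rest on the Weyl-type asymptotics $\lambda_k(\Delta_{\cS^n})\asymp k^{2/n}$ for eigenvalues of the spherical Laplacian, the resulting Sobolev embeddings on $\cS^n$, and a careful design of $\cR_\eps$ (e.g.\ heat-semigroup smoothing or truncation in the spherical-harmonic basis) that respects both constraints; the slack $\iota>0$ reflects the logarithmic gap inherent in the critical Sobolev embedding $H^{n/2}\hookrightarrow L^\infty$ on the $n$-sphere, which forces us to take $s$ slightly above $n/2$ in the definition of the Cameron-Martin space.
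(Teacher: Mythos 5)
Your proposal is correct and follows essentially the same route as the paper. Your map $\Psi(f)=\IE[\zeta(\cR_\eps f+\delta S)]$ is exactly the paper's regularized test function $\zeta_{\eps,\delta}$, and your five-term telescoping decomposition simply expands the paper's three-term triangle inequality by separating $\IE[\zeta(F)-\zeta(F_\eps)]$ from $\IE[\zeta(F_\eps)-\zeta(F_\eps+\delta S)]$ (which the paper bounds together); the remaining ingredients --- a Laplacian-power Gaussian smoother $S$ with Cameron--Martin space a supercritical Sobolev ball, the Cameron--Martin integration-by-parts giving $\delta^{-k}$ factors per derivative, the heat-semigroup regularization $\cR_\eps$ with operator norm into the Cameron--Martin space of size $O(\eps^{-(n+\iota)})$, Fernique/maximal estimates giving $\IE\|S\|_\infty\lesssim\sqrt d$, and the Weyl-type eigenvalue control --- all appear in the paper's Sections 2--3 (Lemma~\ref{lem:inprodbd}, Theorem~\ref{thm:regzet}, and the proof in Section~\ref{sec:wass}) in the same roles.
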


To explain the terms appearing in the bound, we first give the basic idea behind the proof of Theorem~\ref{thm:strongermetric}. Given a function $\zeta:\rC(\mathcal{S}^n;\IR^d)\to \IR$ which is  Lipschitz, 
we define a $(\eps,\delta)$-regularized version $\zeta_{\eps,\delta}$ such that for $k=1,2$, $D^k \zeta_{\eps,\delta}(f)$ exists and has norm bounded uniformly in $f$ of order smaller than $\delta^{-2}\eps^{-2(n+\iota)}$, and  $D^2\zeta_{\eps,\delta}$ is Lipschitz with respect to the operator norm, with constant of order 
 $\delta^{-2}\eps^{-2(n+\iota)}$. In particular, there is a constant $c$ such that, $c\, \delta^{2}\eps^{2(n+\iota)}\zeta_{\eps,\delta}\in \cF$. Applying the triangle inequality yields
\be{
\babs{\IE\cls{\zeta(F)}-\IE\cls{\zeta(H)}}\leq \babs{\IE\cls{\zeta_{\eps,\delta}(F)}-\IE\cls{\zeta_{\eps,\delta}(H)}}+ \babs{\IE\cls{\zeta(F)}-\IE\cls{\zeta_{\eps,\delta}(F)}}+\babs{\IE\cls{\zeta_{\eps,\delta}(H)}-\IE\cls{\zeta(H)}}.
}
Because $c\, \delta^{2}\eps^{2(n+\iota)}\zeta_{\eps,\delta}\in \cF$, the first term is bounded of order  $\delta^{-2}\eps^{-2(n+\iota)} \times \mathsf{d}_{\cF}$. In Theorem~\ref{thm:stnsmoothbd} we bound  $\mathsf{d}_{\cF}(F,H)$ when $H$ is a continuous and centered $\IR^d$ valued Gaussian random field, denoted by $\clr{G(x)}_{x\in \cM}$, having non-negative definite covariance kernel $C_{ij}(x,y)=\IE[G_i(x)G_j(y)]$. This result follows from a development of Stein's method closely related to that of
\cite{Barbour2021a}, following \cite{barbour1990stein}.  
%We establish Theorem~\ref{thm:stnsmoothbd} that provides a quantitative bound on $\mathsf{d}_{\cF}(F,G)$. 

In contrast to the first term in \eqref{eq:masterresult}, the remaining three terms decay as $\eps$ and $\delta$ become small, and in particular, the second and third terms become small because $\zeta$ and $\zeta_{\eps,\delta}$ become close. The quantity
$\norm{F-F_\eps}_\infty$ is closely related to the modulus of continuity of $F$ (see Definition \ref{defn:modulus}), and hence the term $ \IE \norm{F-F_\eps}_\infty$ can be further bounded using classical quantitative tightness arguments, which we present in Lemma~\ref{lem:momepsreg}. The 
optimal choice of $\eps$ and $\delta$ is the one having the best tradeoff between the first and the remaining terms, which may in applications depend on the rate of decay 
of $\mathsf{d}_{\cF}(F,H)$ as a function of `sample' or `network' size, and which mitigates its prefactor tending to infinity.

While this approach is a standard way to parlay a preliminary bound in a smooth metric into a stronger one, the crux of the problem at the random field level is: 
 \emph{how does one construct $\zeta_{\eps,\delta}$}? 
In finite dimensions, a fruitful regularization takes a function $\zeta$ and replaces it with $\zeta_\delta(x)=\IE[\zeta(x + \delta S)]$, where $S$ is a ``smoothing'' standard Gaussian. The smoothness of $\zeta_\delta$  follows by making a change of measure and using
the smoothness of the Gaussian density. See, for example,~\cite{raivc2018multivariate} and references therein for additional details.

For random fields \emph{indexed} by~$\cM$ (or even $\cS^n$ with $n\geq2$), there is no ``standard'' Gaussian and in choosing an appropriate smoothing Gaussian $S$ there are two related potential difficulties. The first is that  Cameron-Martin change of measure formulas involve Paley-Wiener integrals, which in general do not have closed form expressions. Moreover, the Cameron-Martin (or Reproducing Kernel Hilbert) space where the change of measure formula holds is typically restricted to a strict subset of $\rC(\cM;\IR^d)$, meaning that $\law(f +\delta S)$ and $\law(\delta S)$ will be singular for many reasonable $f$. Following the strategy of  \cite{Barbour2021}, one approach is to  
define a smoothing Gaussian random field
$S:\cM\to \IR^d$, where the Cameron-Martin space is a subset of smooth functions. In the simpler setting of \cite{Barbour2021} where $\cM=[0,T]$, $S$ is taken to be Brownian motion with a random Gaussian initial value, and the Cameron-Martin space is well known to be absolutely continuous functions equipped with $L^2$-derivative inner product. In our more general setting of random fields indexed by $\cM$, there is no canonical Gaussian process like Brownian motion with a well-understood Cameron-Martin space.

In our construction of a smoothing Gaussian random field indexed by $\cS^n$, the associated Cameron-Martin space contains a class of functions in the domain of a certain fractional Laplacian and whose images are $L^2$ bounded, and thus can be equipped with a related $L^2$ inner product. With this function class in hand, there is still the issue that not all functions $f\in \rC(\cS^n;\IR^d)$ are in the domain of a fractional Laplacian, and so we use a second $\eps$-regularization, now of $f$, given by $f_\eps(x)=\IE f(B\sp{x}_\eps)$, where $(B\sp{x}_t)_{t\geq0}$ is a Brownian motion on $\cS^n$ started from $x$. Now defining $\zeta_{\eps,\delta}(f)=\IE[\zeta(f_\eps + \delta S)]$, bounds on derivatives of $\zeta_{\eps,\delta}$ can be derived from quantitative information on the spectrum of the Laplacian, which is available in detail for $\cM=\cS^n$. This procedure is elaborated in Section~\ref{sec:smth}.

Although Theorem~\ref{thm:stnsmoothbd} for bounding $\mathsf{d}_{\cF}(F,G)$ holds for any compact metric measure space $(\cM,\dist,\nu)$, specializing to the case of $\cM=\cS^n$ in Theorem~\ref{thm:strongermetric} allows us to obtain explicit bounds in terms of the problem parameters (i.e., $n$ and $d$, etc.). The technology of our Laplacian-based smoothing approach applies in
\nr{more general settings.
Explicit bounds can be obtained using our approach anytime there are appropriate estimates for the heat kernel (to prove analogs of Lemma~\ref{lem:momepsreg}) and the spectrum  of the Laplacian (to
construct covariance functions analogous to Section~\ref{sec:constructcov}).
For general Riemannian manifolds, such quantitative spectral estimates are well studied; for example, see \cite{Grigoryan2009}, \cite{Grieser2002} and \cite{zelditch2017eigenfunctions}. Even more generally, understanding Gaussian random fields and Laplacian operators on general metric measure spaces is an active area; see, for example, \cite{sturm1998diffusion} and~\cite{burago2019spectral}.} We highlight that our proofs would also work with functionals of the Laplacian other than fractional powers, as long as they would ensure the required smoothness conditions are satisfied. This flexibility in our proof technique might turn out to be crucial in cases when $\cM$ is not the $n$-sphere.

\subsection{Application to wide random neural networks}\label{sec:appdnn}
We now show how Theorem~\ref{thm:strongermetric} is used to obtain quantitative bounds on the distributional approximation of wide random neural networks by appropriately defined Gaussian random fields. Our first motivation to do so is as follows. In practice, widely used training algorithms like stochastic gradient descent are initialized randomly. In light of that, an interesting question was raised by~\cite{golikov2022nongaussian}: \emph{Does the distribution of the initial weights matter for the training process?} The authors demonstrate that for a large class of distributions of the initial weights, wide random neural networks are Gaussian random fields in the limit. Based on this outcome, they argue that as long as the distribution of the weights are from this universality class, the answer to the above question is \emph{no}. Our results in this section could be used to  quantify this phenomenon. 

Our second motivation is to initiate the study of the training dynamics of neural networks for prediction problems, at the random field level. Several works~\citep{sirignano2020mean,chen2020dynamical,rotskoff2022trainability} demonstrate that when neural networks are trained by gradient descent with small order step-sizes, certain functionals exhibit limiting Gaussian behavior along the training trajectory. Under larger order step-sizes, the works~\citep{damian2022neural,ba2022highdimensional,abbe2022merged} demonstrate that neural networks behave differently than Gaussian-process based prediction methods (including certain classes of kernel methods), thus suggesting the existence of a phase transition from Gaussian to non-Gaussian limits. Our result in this section, along with the associated proof techniques, take a first step towards understanding the above phenomena at the random field level, by developing quantitative information about the setting where the Gaussian behavior is observed.

Formally, we consider a fully connected $L$-layer neural network that is defined recursively through random fields 
$F\sp{\ell}: \cM\to \IR^{n_\ell}$, $\ell=1,\ldots,L$, where $n_1, \ldots, n_L$ are positive integers corresponding to the widths of the network, with $n_L$ assumed  constant. We also assume that $\cM\subset\IR^{n_0}$. The random fields are generated by a collection of random matrices $(W\sp\ell)_{\ell=0}^{L-1}$ where $W\sp\ell:\IR^{n_\ell}\to\IR^{n_{\ell+1}}$, 
with 
$W\sp0$ having i.i.d.\ rows, $W^{(\ell)}$ 
 having independent entries for $1 \leq \ell \leq L-1$, and a collection $(b\sp{\ell})_{\ell=0}^{L-1}$ of centered Gaussian ``bias'' vectors. For $x \in \cM$, we define
\ba{
F\sp{1}(x)& = W\sp{0} x + b\sp{0}, \\
F\sp{\ell}(x) &= W\sp{\ell-1} \sigma\bclr{F\sp{\ell-1}(x)} + b\sp{\ell-1}, \,\,\, \ell=2,\ldots, L,
}
where $\sigma:\IR\to\IR$ is an activation function that we apply to vectors coordinate-wise.
We assume that 
\be{
\Var(W_{ij}\sp{\ell})=\frac{c_w\sp{\ell}}{n_\ell}, \,\, \mbox{ and } \Var(b_i\sp{\ell}) = c_b\sp{\ell}.
}

The limiting Gaussian random field is defined inductively as follows. First let $G\sp{1}=F\sp{1}$, which in general is not a Gaussian random field (since $W_{ij}\sp0$ is not assumed Gaussian), and has covariance 
\be{
C_{ij}\sp{1}(x,y)=\delta_{ij} \bbbclr{\frac{c_w\sp{0}}{n_0}\angle{x,y} + c_b\sp{0}},
}
where $\delta_{ij}$ is the Kronecker delta, and $\angle{\cdot, \cdot}$ is the usual Euclidean inner product.  
Given the distribution of $G\sp{\ell}$ for some $\ell\geq 1$, we define $G\sp{\ell+1}$ to be a centered Gaussian random field with covariance 
\be{
C_{ij}\sp{\ell+1}(x,y) = \delta_{ij}\bbbclr{c_w\sp{\ell}\IE\bbcls{\sigma\bclr{G\sp{\ell}_1(x)} \sigma\bclr{G\sp{\ell}_1(y)}} +c_b\sp{\ell}}.
}
As the rows of $W\sp\ell$ are assumed i.i.d.\ and the network is fully connected, the components of $F^{(\ell)}, \ell \geq 1$ are exchangeable, so in particular identically distributed. Additionally, the 
covariance functions of $F^{(\ell+1)}$ obey the same recurrence as the one above, with $G_1\sp{\ell}$ replaced by $F_1\sp{\ell}$, and hence have uncorrelated components. 
Consequently, paralleling the covariance structure of $F^{(\ell)}$, the components of the 
Gaussian weighted network $G\sp{\ell}$ are, additionally, made independent.

We now state our results for neural networks that have Lipschitz activation function. Widely used activation functions such as the $\mathsf{ReLU}$, $\mathsf{sigmoid}$, $\mathsf{softmax}$, and $\mathsf{tanh}$ satisfy this assumption. 

\begin{theorem}\label{thm:w1bound}
 Let $\cS^n\subset \IR^{n+1}=:\IR^{n_0}$ be the $n$-dimensional sphere, and $G\sp{\ell}, F\sp\ell:\cS^n \to \IR^{n_\ell}$, $\ell=1,\ldots, L$ be defined as above. Assume $\sigma$ is Lipschitz with constant $\Lip_\sigma$. If there is a $p>n$ and constants $B\sp\ell$, $\ell=0,\ldots,L-1$, independent of $n_1,\ldots,n_{L-1}$ such that 
 \ben{\label{eq:wellmomcond}
\IE\bcls{\bclr{W_{ij}\sp\ell}^{2p}}\leq \bbbclr{\frac{c_w\sp\ell}{n_\ell}}^p \bclr{B\sp\ell}^{p/2},
 }
 then for any $\iota>0$, there is a constant $c$ depending only on $(c_w\sp\ell,c_b\sp\ell,B\sp\ell)_{\ell=0}^L, n, p, \sigma(0), \iota$ such that 
 \bes{
 \mathsf{d}_\cW&(F\sp{L},G\sp{L}) \\
    &\leq  c\, \nr{(1+\Lip_\sigma)^{3(L-1)}}  \sum_{\ell=1}^{L-1}\bbbclr{n_{\ell+1}^{1/2}\bbbclr{\frac{n_{\ell+1}^4}{n_\ell}}^{(1-\frac{n}{p})/(6(1-\frac{n}{p}) +8(n+\iota))}\log(n_{\ell}/n_{\ell+1}^4)} \prod_{j=\ell+1}^{L-1} \IE \norm{W\sp j}_{\mathrm{op}},
 }
where $\norm{\cdot}_{\mathrm{op}}$ denotes the matrix operator norm with respect to Euclidean distance.
\end{theorem}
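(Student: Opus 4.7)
The plan is to apply the master theorem (Theorem~\ref{thm:strongermetric}) once per layer and stitch the one-step bounds together by induction on $\ell$. For each $\ell$, I would introduce an intermediate ``oracle'' Gaussian random field $\widetilde G^{(\ell+1)}$ that, conditional on $F^{(\ell)}$, is centered Gaussian with the same covariance recipe as $G^{(\ell+1)}$ but with $G^{(\ell)}$ replaced by the realized $F^{(\ell)}$. By the triangle inequality,
\[
\mathsf{d}_\cW\bigl(F^{(\ell+1)},G^{(\ell+1)}\bigr)\;\leq\;\mathsf{d}_\cW\bigl(F^{(\ell+1)},\widetilde G^{(\ell+1)}\bigr)\;+\;\mathsf{d}_\cW\bigl(\widetilde G^{(\ell+1)},G^{(\ell+1)}\bigr).
\]
The first term is a one-step Gaussian approximation at layer $\ell+1$ conditional on $F^{(\ell)}$; the second compares two centered Gaussian fields whose covariance kernels differ only through $F^{(\ell)}$ versus $G^{(\ell)}$, and hence by a coupling argument is controlled by $\mathsf{d}_\cW(F^{(\ell)},G^{(\ell)})$ up to a multiplicative factor that grows like $\Lip_\sigma\cdot\IE\norm{W^{(\ell)}}_{\mathrm{op}}$. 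Iterating this recursive inequality produces the sum and product structure of the claim: the accumulated products $\prod_{j=\ell+1}^{L-1}\IE\norm{W^{(j)}}_{\mathrm{op}}$ are propagation factors from layers $\ell+1$ onward, while the prefactor $(1+\Lip_\sigma)^{3(L-1)}$ comes from propagating third-order moments of $\sigma(F^{(\ell)})$, which are what Stein's method consumes at each layer.

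For the one-step term I apply Theorem~\ref{thm:strongermetric} conditional on $F^{(\ell)}$, with ambient dimension $d=n_{\ell+1}$. The smooth-metric ingredient $\mathsf{d}_\cF(F^{(\ell+1)},\widetilde G^{(\ell+1)}\mid F^{(\ell)})$ is delivered by Theorem~\ref{thm:stnsmoothbd}: conditional on $F^{(\ell)}$, $F^{(\ell+1)}(x)-b^{(\ell)}$ is a sum of $n_\ell$ independent contributions (columns of $W^{(\ell)}$ multiplying $\sigma(F^{(\ell)}(x))$), so Stein's method yields a bound of order $n_{\ell+1}^2/\sqrt{n_\ell}$ times a third-moment factor $\IE\sup_x\norm{\sigma(F^{(\ell)}(x))}_2^3$, which I would control by propagating moments through the layers using \eqref{eq:wellmomcond}, $\Lip_\sigma$, and $\sigma(0)$. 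The modulus-of-continuity terms $\IE\norm{F^{(\ell+1)}-F^{(\ell+1)}_\eps}_\infty$ and $\IE\norm{\widetilde G^{(\ell+1)}-\widetilde G^{(\ell+1)}_\eps}_\infty$ come from Lemma~\ref{lem:momepsreg}, which under the $2p$-th moment hypothesis produces estimates of order roughly $\eps^{1-n/p}|\log\eps|$. Optimizing $(\eps,\delta)$ against the master-theorem trade-off between $n_{\ell+1}\,\delta^{-2}\eps^{-2(n+\iota)}\mathsf{d}_\cF$, the two modulus terms, and $\delta\sqrt{n_{\ell+1}}$ yields the specific per-layer rate
\[
n_{\ell+1}^{1/2}\bigl(n_{\ell+1}^4/n_\ell\bigr)^{(1-n/p)/(6(1-n/p)+8(n+\iota))}\log\bigl(n_\ell/n_{\ell+1}^4\bigr)
\]
appearing in the statement; the exponent is precisely the ratio that balances $n_{\ell+1}^2/\sqrt{n_\ell}$ against $\eps^{1-n/p}$ and $\eps^{-2(n+\iota)}$.

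The main obstacle I anticipate is the honest conditional application of the master theorem at each layer, followed by clean integration over $F^{(\ell)}$. Concretely this requires (i) verifying that Theorem~\ref{thm:strongermetric} can be used conditionally with a deterministic, width-dependent choice of $(\eps,\delta)$, and that the resulting conditional bounds are measurable and integrable in $F^{(\ell)}$; (ii) propagating a uniform third-moment estimate on $\sigma(F^{(\ell)}(x))$ through the layers, which is where the cubic Lipschitz factor per layer, and hence the $(1+\Lip_\sigma)^{3(L-1)}$, originates; and (iii) executing the Gaussian-to-Gaussian comparison $\mathsf{d}_\cW(\widetilde G^{(\ell+1)},G^{(\ell+1)})$ through an explicit coupling that correctly inherits the inductive Wasserstein distance $\mathsf{d}_\cW(F^{(\ell)},G^{(\ell)})$ from the previous layer, picking up exactly one factor of $\Lip_\sigma\cdot\IE\norm{W^{(\ell)}}_{\mathrm{op}}$. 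Once these technicalities are handled, the telescoping is straightforward and layer-wise optimization of $(\eps,\delta)$ produces the stated bound.
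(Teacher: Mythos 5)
Your decomposition is close in spirit but introduces a genuine gap where the paper's proof is careful. You interpose the conditionally Gaussian field $\widetilde G^{(\ell+1)}$ (centered Gaussian given $F^{(\ell)}$ with the ``empirical'' covariance built from $\sigma(F^{(\ell)})$), whereas the paper interposes $\widehat F^{(\ell+1)} := W^{(\ell)}\sigma\bigl(G^{(\ell)}\bigr)$, which is \emph{not} Gaussian but whose \emph{unconditional} covariance matches $G^{(\ell+1)}$ exactly. This difference matters. In your scheme, the step $\mathsf{d}_\cW\bigl(\widetilde G^{(\ell+1)},G^{(\ell+1)}\bigr)$ is not in fact controlled by $\Lip_\sigma\,\IE\|W^{(\ell)}\|_{\mathrm{op}}\cdot\mathsf{d}_\cW\bigl(F^{(\ell)},G^{(\ell)}\bigr)$ via a coupling. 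Unconditionally, $\widetilde G^{(\ell+1)}$ is a mixture of Gaussians (over the law of $F^{(\ell)}$), realizable as $W_G\sigma(F^{(\ell)})+b$ with independent \emph{Gaussian} weights $W_G$; the synchronous coupling you invoke replaces $F^{(\ell)}$ by $G^{(\ell)}$ and lands on $W_G\sigma(G^{(\ell)})+b$, which is \emph{still a Gaussian mixture}, not the Gaussian $G^{(\ell+1)}$ (whose covariance involves $\IE[\sigma(G_1^{(\ell)}(x))\sigma(G_1^{(\ell)}(y))]$, with the expectation inside). Closing the remaining gap $\mathsf{d}_\cW\bigl(W_G\sigma(G^{(\ell)})+b,\,G^{(\ell+1)}\bigr)$ requires a second full Stein-plus-smoothing pass at rate $n_{\ell+1}^{3/2}/\sqrt{n_\ell}$, which your outline omits. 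The paper avoids this entirely: $\widehat F^{(\ell+1)}$ is exactly one Stein step away from $G^{(\ell+1)}$ (same covariance, sum of $n_\ell$ i.i.d.\ summands since $G^{(\ell)}$ has i.i.d.\ coordinates), and exactly one Lipschitz-composition step away from $F^{(\ell+1)}$ (define $\widetilde\zeta(f)=\IE[\zeta(W^{(\ell)}\sigma(f))]$, which has Lipschitz constant $\IE\|W^{(\ell)}\|_{\mathrm{op}}\Lip_\sigma$, and apply it to the inductive $\mathsf{d}_\cW(F^{(\ell)},G^{(\ell)})$).

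A secondary issue: your plan calls for applying Theorem~\ref{thm:strongermetric} and Lemma~\ref{lem:stnmethit} \emph{conditionally} on $F^{(\ell)}$. Lemma~\ref{lem:stnmethit} is stated for $H$ a random field with i.i.d.\ coordinate processes; conditional on $F^{(\ell)}$ the layer-$\ell$ output is a \emph{fixed} function, so the lemma as proved does not directly apply and would need to be restated and reproved for deterministic $H$ (replacing $\IE\|\sigma(H_1)\|_\infty^3$ by the empirical average $\tfrac{1}{m}\sum_j\|\sigma(H_j)\|_\infty^3$, and only then integrating over $F^{(\ell)}$). This is repairable but is a nontrivial restatement, and the paper sidesteps it altogether by working unconditionally with $\widehat F^{(\ell+1)}$, applying Lemma~\ref{lem:stnmethit} with $H=G^{(\ell)}$ (which \emph{does} have i.i.d.\ coordinates). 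Finally, the paper's induction propagates three explicit hypotheses simultaneously --- the $\mathsf{d}_\cW$ bound, the $2p$-th moment increment bound \eqref{eq:fgmomind}, and the cubic supremum bound \eqref{eq:siggellmombd} for $\sigma(G^{(\ell)})$ --- and uses the modulus-of-continuity machinery (Lemma~\ref{lem:momepsreg} via Lemma~\ref{lem:mcmombd}) for the $\IE\|F-F_\eps\|_\infty$ terms; you gesture at propagating moments but these specific hypotheses are what let the $(1+\Lip_\sigma)^{3(L-1)}$ factor and the $\eps$-dependence actually close.
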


To the best of our knowledge, Theorem~\ref{thm:w1bound} provides the first result in the literature for bounding the law of wide random neural networks of any depth and Lipschitz activation functions, to that of a Gaussian. We emphasize in particular that the stated bounds are at the random field level, with the metric being the $W_1$ metric under the stronger $\sup$-norm topology; see Section~\ref{sec:comparison} for comparisons to prior works.
\begin{remark}\label{rem:firstrate}
\nr{To understand the bound in Theorem~\ref{thm:w1bound},
first note that in terms of the layer widths, the bound can be small only if 
\ben{\label{eq:seqlim}
n_{\ell+1}^{7+\frac{4(n+\iota)}{1-n/p}}\ll n_\ell,
}
so each layer must go to infinity polynomially faster than the next for the bound to go to zero. For the operator norm terms, if $W_{ij}\sp\ell=n_\ell^{-1/2} \widetilde W_{ij}\sp\ell$, with $\widetilde W_{ij}\sp\ell$ sub-Gaussian, then according to \cite[Exercises~4.4.6 and 4.4.7]{Vershynin2018}, we have
\be{
\IE \norm{W\sp\ell}_{\mathrm{op}} =\Theta\bclr{1 + \sqrt{n_{\ell+1}/n_\ell}}.
} 
Note that under the same assumption,~\eq{eq:wellmomcond} is satisfied for all $p\geq 2$. Thus, assuming $n_\ell$ goes to infinity fast enough relative to $n_{\ell+1}$ so that~\eq{eq:seqlim} is satisfied for large $p$, the final bound has rate 
\be{
\sum_{\ell=1}^{L-1}n_{\ell+1}^{1/2}\bbbclr{\frac{n_{\ell+1}^4}{n_\ell}}^{\frac{1}{8n+6}-\eps},
}
for any $\eps>0$. 
In the case $n=1$ and $L=2$, the rate of $n_1^{-\tfrac{1}{14}+\eps}$ matches that   given in the similar but simpler setting in \cite[Remark~1.9]{Barbour2021}, which suggests the exponent is
linked to our method, and is most certainly not optimal. (One would hope for the central limit rate of $n_1^{-1/2}$, at least  up to log factors.) 
}

\nr{
Regarding the dependence on the widths tending to infinity, it is known from \cite{Hanin2021} that asymptotic convergence to a Gaussian process holds as long as the widths all go to infinity, regardless of relative size. Therefore, our bound's dependence on the scaling of the widths is sub-optimal and is applicable in the so-called \emph{sequential limit} setting (see Section~\ref{sec:comparison} for more details). The source of this sub-optimality is an artifact of our proof.
Specifically, under a sub-Gaussian assumption on the entries of $W^{(\ell)}$, the result in Theorem~\ref{thm:w1bound} actually provides a bound for the Gaussian approximation for every layer, i.e., for the quantity $\sum_{\ell=1}^L \mathsf{d}_\cW(F\sp{\ell},G\sp{\ell})$. Indeed, for sufficiently wide neural networks, using results by~\citet[Exercise 4.4.7]{Vershynin2018} as outlined in Remark \ref{rem:firstrate}, the bound in Theorem~\ref{thm:w1bound}, with a potentially different constant $c$ having the same dependencies, applies to all layers simultaneously. For such a bound, polynomial dependence on the next layer is likely inevitable, as is the case in the classical CLT in high dimensions. 
}

\nr{
The moment bounds on the weights are used to control the modulus of continuity terms in~\eq{eq:masterresult}. The operator norm of the weights appear because we are working directly in the $\sup$-norm. The reliance of the bound on these is likely not optimal, and improvements may be achievable using our method with better technical manipulations. 
We emphasize that the question of the optimal reliance on the widths and moments of the weights is totally open, and our bound is the first and currently the only available bound at the process level that sheds light on the answer.}
\end{remark}

We next give a high-level idea behind the proof of Theorem~\ref{thm:w1bound}. Note that, conditional on the $\ell$-th layer,  layer $(\ell+1)$ is a sum of~$n_{\ell}$  random fields and a Gaussian:
\be{
F_i\sp{\ell+1}=\sum_{j=1}^{n_\ell}W\sp\ell_{i j} \sigma\bclr{F_j\sp{\ell}}  + b_i\sp{\ell},  \,\,\,\, i=1,\ldots, n_{\ell+1}.
}
Inductively, assuming an appropriate bound on the distributional distance between $F\sp{\ell}$ and $G\sp{\ell}$, we can bound the error made in the approximation
\be{
F_i\sp{\ell+1}\approx \sum_{j=1}^{n_\ell}W\sp\ell_{i j} \sigma\bclr{G_j\sp{\ell}}  + b_i\sp{\ell},  \,\,\,\, i=1,\ldots, n_{\ell+1}.
}
The field on the right-hand side has the same covariance as $G\sp{\ell+1}$, 
and hence the approximation bound in Theorem~\ref{thm:w1bound} follows by recursive application of the Stein's method approximation Theorem~\ref{thm:stnsmoothbd} (summarized in Lemma~\ref{lem:stnmethit}) to bound $\mathsf{d}_\cF$, combined with Theorem~\ref{thm:strongermetric}. 
 
As detailed in Remark \ref{rem:rate.improvement}, our next result shows that one gains an improved \nr{dependence on the widths} under the assumption that the activation function $\sigma$ is three-times differentiable, and when smoothing is only performed in the final stage, in contrast to the result of 
Theorem~\ref{thm:w1bound}, which is obtained by  smoothing at each step of the recursion.

\begin{theorem}\label{thm:rateimprovment}
Instantiate the conditions of Theorem~\ref{thm:w1bound} and assume in addition that the activation function $\sigma$ has three bounded derivatives. Then, for any $\iota>0$, there is a constant $c$ depending only on $(c_w\sp\ell,c_b\sp\ell,B\sp\ell)_{\ell=0}^L,$
$n, p, \iota,$ and $\|\sigma^{(k)}\|_\infty$, the supremum of the $k$th derivative of $\sigma$, $k=1,2,3$, such that
\be{
 \mathsf{d}_\cW(F\sp{L},G\sp{L})\leq c \sqrt{n_L}  (n_L\beta_L^2)^{(1-\frac{n}{p})/(6(1-\frac{n}{p}) +8(n+\iota))} \sqrt{\log(1/(n_L\beta_L^2))},
}
where
\begin{align}\label{eq:betaanda}
    \beta_L \coloneqq  \sum_{\ell=1}^{L-1}
    \frac{n_{\ell+1}^{3/2}}{\sqrt{n_\ell}}\prod_{j=\ell+1}^{L-1} \max\bclc{1,\IE\bcls{\norm{W\sp{j}}_{\mathrm{op}}^3}}.%\qquad\text{with}\qquad
%a_j:=\max\bclc{1,\IE\bcls{\norm{W\sp{j}}_{\mathrm{op}}^3}}.
\end{align}
\end{theorem}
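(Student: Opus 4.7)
The plan is to invoke the master theorem (Theorem~\ref{thm:strongermetric}) just once, at the output layer $L$, and exploit the $C^3$ assumption on $\sigma$ to propagate a bound in the smooth metric $\mathsf{d}_\cF$ through all $L-1$ hidden layers in a single pass. This is in sharp contrast to the proof of Theorem~\ref{thm:w1bound}, which applies Theorem~\ref{thm:strongermetric} at every layer and therefore pays the smoothing cost $\delta^{-2}\eps^{-2(n+\iota)}$ once per layer, producing a weaker dependence on the widths.

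\textbf{Layerwise recursion in $\mathsf{d}_\cF$.}
For each $\ell\in\{1,\ldots,L-1\}$, introduce the hybrid field $\wt G\sp{\ell+1}:=W\sp{\ell}\sigma(G\sp{\ell})+b\sp{\ell}$, which (by the same covariance calculation as in the discussion preceding Theorem~\ref{thm:w1bound}) has the same covariance as $G\sp{\ell+1}$, but is not itself Gaussian since $W\sp{\ell}$ need not be. By the triangle inequality,
\be{
\mathsf{d}_\cF(F\sp{\ell+1},G\sp{\ell+1})\leq\mathsf{d}_\cF(F\sp{\ell+1},\wt G\sp{\ell+1})+\mathsf{d}_\cF(\wt G\sp{\ell+1},G\sp{\ell+1}).
}
For the first term, given $\zeta\in\cF$, set $\wt\zeta(f):=\IE[\zeta(W\sp{\ell}\sigma(f)+b\sp{\ell})]$, with the expectation taken only over $(W\sp{\ell},b\sp{\ell})$; by independence of $(W\sp{\ell},b\sp{\ell})$ from $(F\sp{\ell},G\sp{\ell})$ we have $\IE\zeta(F\sp{\ell+1})=\IE\wt\zeta(F\sp{\ell})$ and likewise for $\wt G\sp{\ell+1}$. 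A Fr\'echet chain-rule calculation, which uses the three bounded derivatives of $\sigma$ to control the first three derivatives of $f\mapsto W\sp{\ell}\sigma(f)+b\sp{\ell}$, shows that the operator norms of $D^k\wt\zeta(f)$ for $k=1,2$ and the $\sup$-norm Lipschitz constant of $D^2\wt\zeta$ are each bounded by a constant (depending only on $\|\sigma^{(k)}\|_\infty$ for $k=1,2,3$) times $\max\{1,\|W\sp{\ell}\|_{\mathrm{op}}^3\}$. Hence a suitable rescaling of $\wt\zeta$ belongs to $\cF$, and averaging over $W\sp{\ell}$ yields
\be{
\mathsf{d}_\cF(F\sp{\ell+1},\wt G\sp{\ell+1})\leq c\,\IE\bigl[\max\{1,\|W\sp{\ell}\|_{\mathrm{op}}^3\}\bigr]\,\mathsf{d}_\cF(F\sp{\ell},G\sp{\ell}).
}

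\textbf{Per-layer Gaussian approximation via Stein's method.}
For the second term, condition on $G\sp{\ell}$: $\wt G\sp{\ell+1}$ then becomes a sum of $n_\ell$ independent $\IR^{n_{\ell+1}}$-valued random fields indexed by the columns of $W\sp{\ell}$, plus an independent Gaussian shift from $b\sp{\ell}$, while $G\sp{\ell+1}$ is the matching Gaussian. Applying Theorem~\ref{thm:stnsmoothbd} conditionally on $G\sp{\ell}$, each column of $W\sp{\ell}$ contributes a term of norm of order $\sqrt{n_{\ell+1}/n_\ell}$, so the third-moment quantity entering the Stein bound aggregates across the $n_\ell$ summands to order $n_{\ell+1}^{3/2}/\sqrt{n_\ell}$, controlled uniformly in $G\sp{\ell}$ upon taking expectations via the moment assumption~\eq{eq:wellmomcond}. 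Unrolling the recursion starting from the trivial base case $\mathsf{d}_\cF(F\sp{1},G\sp{1})=0$ (since $F\sp{1}=G\sp{1}$ by construction) yields $\mathsf{d}_\cF(F\sp{L},G\sp{L})\leq c\,\beta_L$.

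\textbf{Concluding optimization and main obstacle.}
Applying Theorem~\ref{thm:strongermetric} with $d=n_L$, $F=F\sp{L}$, $H=G\sp{L}$, and inserting the bound $\mathsf{d}_\cF(F\sp{L},G\sp{L})\leq c\beta_L$ together with the modulus-of-continuity estimates for $F\sp{L}$ and $G\sp{L}$ obtained from (a suitable analogue of) Lemma~\ref{lem:momepsreg} --- whose width and weight-moment factors are absorbed into the prefactor $c$ --- produces a bound of the form $c\bigl(n_L\beta_L\delta^{-2}\eps^{-2(n+\iota)}+\sqrt{n_L}\eps^{\gamma}+\delta\sqrt{n_L}\bigr)$, up to logarithmic refinements in $\eps$, where $\gamma$ is the effective H\"older exponent supplied by the modulus-of-continuity bound (of order $1-n/p$). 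A three-way balancing in $\delta$ and $\eps$ then exposes the stated exponent $\gamma/(6\gamma+8(n+\iota))$ and the $\sqrt{n_L}$ prefactor, with the $\sqrt{\log(1/(n_L\beta_L^2))}$ correction arising from the logarithmic sharpening in the modulus estimate. The main obstacle is the Fr\'echet chain-rule computation for $\wt\zeta$: one must enumerate all mixed terms in the first three derivatives of $\zeta\circ(f\mapsto W\sp{\ell}\sigma(f)+b\sp{\ell})$, show each collapses into a single factor involving $\|W\sp{\ell}\|_{\mathrm{op}}$ to a power at most three and bounded combinations of $\|\sigma^{(j)}\|_\infty$ ($j=1,2,3$), and verify the same control for the $\sup$-norm Lipschitz modulus of the second derivative. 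A secondary issue is to check that the conditional Stein bound is uniform enough in $G\sp{\ell}$ that integrating over $G\sp{\ell}$ does not re-introduce factors beyond those already absorbed into $\beta_L$.
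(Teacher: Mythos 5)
Your proposal reproduces the paper's argument essentially step for step: the $\mathsf{d}_\cF$-recursion of Theorem~\ref{thm:nnsmoothbd} (hybrid field $W\sp\ell\sigma(G\sp\ell)$, test-function pushforward $\widetilde\zeta(f)=\IE[\zeta(W\sp\ell\sigma(f))]$ with the chain-rule bound $c\max\{1,\IE\|W\sp\ell\|_{\mathrm{op}}^3\}$, and the per-layer Stein term of order $n_{\ell+1}^{3/2}/\sqrt{n_\ell}$), followed by a single invocation of Theorem~\ref{thm:strongermetric} with the identical $(\eps,\delta)$ balancing. One caution: framing the per-layer Stein step as ``applying Theorem~\ref{thm:stnsmoothbd} conditionally on $G\sp\ell$ with $G\sp{\ell+1}$ the matching Gaussian'' is imprecise, since the conditional covariance of $W\sp\ell\sigma(G\sp\ell)$ given $G\sp\ell$ does not equal the covariance of $G\sp{\ell+1}$ (that would require an extra concentration argument, as in the Gaussian-weight literature); the paper's Lemma~\ref{lem:stnmethit} instead works unconditionally, folding the randomness of $H=G\sp\ell$ into the Stein computation via identity~\eqref{eq:subD^2eta.for.A}, which only requires the \emph{unconditional} second moments to match, so the $G\sp\ell$-dependence surfaces solely through $\IE[\|\sigma(H_1)\|_\infty^3]$.
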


\begin{remark}\label{rem:rate.improvement}
Under the same setting as in Remark~\ref{rem:firstrate}, with $C_L$ a constant depending only on $L$, we have
    \begin{align*}
        \beta_L^2 \leq C_L \sum_{\ell=1}^{L-1}\frac{n^3_{\ell+1}}{n_\ell} \quad \mbox{and hence} \quad \mathsf{d}_\cW(F\sp{L},G\sp{L})\leq \sqrt{n_L} \left( n_L \sum_{\ell=1}^{L-1} \frac{n^3_{\ell+1}}{n_\ell}\right)^{\frac{1}{(8n+6)} - \eps},
    \end{align*}
for any $\eps>0$, \nr{which will tend to zero as long as $n_{\ell+1}^3 \ll n_\ell$, demonstrating the improvement on the width dependence obtained by Theorem \ref{thm:rateimprovment} (specifically, in comparison to~\eqref{eq:seqlim}).}
\end{remark}

\subsubsection{Comparison to related works}\label{sec:comparison}
\cite{Eldan2021} studied Gaussian random field approximation bounds for the case of $L=2$ with Gaussian weights with three specific choices of activation functions. They used the Wasserstein-2 distance with respect to $L^p$ topology on the sphere. 
For polynomial activations they work 
with $p=\infty$, and for \textsf{ReLU} and \textsf{tanh} they work with $p<\infty$.  Following that work, \cite{Klukowski2022} derived improved bounds in the Wasserstein-2 distance with respect to $L^2$ topology, assuming the rows of the weight matrix are drawn uniformly from the sphere.
We remark that weak convergence with respect to integral norms (such as $L^p$ with $p<\infty$) does not imply weak convergence of finite dimensions, or of other natural statistics such as the maximum.   
 
\cite{Basteri2022} gives rate of convergence of finite dimensional distributions 
for general depth fully connected networks and Gaussian weights. The metric is Wasserstein-2 with respect to Euclidean norm.  The bound of \cite{Basteri2022} exhibits multivariate convergence as long as $n_\ell$ tends to infinity for each $\ell=1,\ldots,L-1$, in any order. This phenomenon is a consequence of a very good relationship between the dimension and the number of terms for the rate of convergence in the multivariate CLT, stemming from the metric used there, and the Gaussian assumptions on the weights.

\iffalse
Regarding the behavior of the bound in Theorem~\ref{thm:w1bound} for large hidden layer widths $n_\ell$, $\ell=1,\ldots, L-1$: Assuming sub-Gaussian tails of the $\sqrt{n_\ell} \, W_{ij}\sp\ell$,  \cite[Section~4.4]{Vershynin2018} implies
\be{
\IE\bcls{\norm{W\sp\ell}_{\mathrm{op}}^3}=\bigo\bbbclr{1 + \bbbclr{\frac{n_{\ell+1}}{n_\ell}}^{3/2}}.
}
For
$\IE\bcls{\norm{\sigma(G_1\sp\ell)}_\infty^3}$, $G_1\sp\ell$ is a one-dimensional Gaussian process with nice covariance and so will have sub-Gaussian supremum and this term is of constant order. 
Thus essentially the bound will be small if  $n_{\ell+1}^3\ll n_\ell$ for each $\ell=1,\ldots, L-1$; note this is automatically satisfied for $L=2$ if the hidden layer is wide.  In particular, sending $n_\ell\to\infty$ in order of $\ell=1,2,\ldots, L-1$ implies the bound goes to zero.  
\fi

\cite{bordino2023} used Stein's method to derive bounds for 
univariate distributional approximation for one-layer neural networks with Gaussian weights in the $W_1$, Kolmogorov and total variation metrics, and 
bounds for the error of the approximation of a multivariate output of the network by a Gaussian, in the $W_1$ metric. Their approach is based on a straight-forward but laborious application of a Gaussian approximation result for functions of Gaussian random variables in~\cite{vidotto2020improved}, which is a multivariate refinement of the second-order Poincar\'e inequality version of Stein's method 
introduced by~\cite{chatterjee2009fluctuations}.  

More recently, \citet[Theorem 3.13]{favaro2023quantitative}, in a paper posted to arXiv roughly two weeks after the posting of our draft, prove a rate of convergence in the $\sup$-norm under the assumptions that (i) the weights $W_{ij}$ are Gaussian, (ii) the activation function $\sigma$ is infinitely smooth with polynomially bounded derivatives, and (iii) the width of the layers all tend to infinity at the same rate. \textcolor{black}{The setting in (iii) is called as the \emph{simultaneous limit} setting in the literature, whereas the setting for our results in Theorems~\ref{thm:w1bound} and~\ref{thm:rateimprovment} is known as the \emph{sequential limit} setting; see~\cite{lee2018deep,Matthews2018, bahri2023houches} for the applications and differences between the two settings. While our bound is not informative under the \emph{simultaneous limit} setting (iii), it becomes informative under the \emph{sequential limit} setting, and applies when assuming significantly much less than (i) and (ii).} In particular, condition (ii) renders \citet[Theorem~3.13]{favaro2023quantitative} inapplicable for the widely used \textrm{ReLU} activation functions, in contrast to our results. The proof of \textcolor{black}{Theorem 3.13 of  \citet{favaro2023quantitative}} follows from the observation (also used in \cite{Basteri2022}) that due to the weights being Gaussian, the distribution of one layer conditional on the previous one is a Gaussian field, but with a conditional covariance. 
%Bounding the distance between this process and the unconditional (limiting) Gaussian process in transportation metrics with respect to (Hilbert space) Sobolev topologies is classically done by bounding the Hilbert-Schmidt norm between the square roots of the covariance operators. Because of the choice of metric, these Hilbert-Schmidt norms are of $L_2$-type for the difference of the conditional and unconditional covariances, which can be bounded in this case using uniform bounds on 
\textcolor{black}{In transport metrics with respect to Sobolev topologies, the distance between two Gaussians is determined by a distance between their covariances, which can be  controlled using results of \cite{Hanin2022}.}
%Note that working in Sobolev space requires bounding $L_2$-type distances between derivatives of the covariances. 
%Finally, moving from the Sobolev transport metric to $\sup$-norm is accomplished in \cite{favaro2023quantitative} by the Sobolev embedding theorem.  
While this approach leads to a remarkable result for the Gaussian weights with infinitely smooth activation functions, the method of proof does not appear to generalize beyond this specific setting, thereby providing compelling motivation for our alternative general approach.
 
\subsubsection{Future directions} Obtaining a deeper understanding of the weak convergence of wide random neural networks to Gaussian (and non-Gaussian) random fields is an active area of research. Here, we highlight a few interesting directions which can be pursued based on our work.

\textsl{Rate improvements:} There are at least two directions to explore for improving the bounds of 
Theorem~\ref{thm:w1bound} and Theorem~\ref{thm:rateimprovment}. The first, is in the case of Gaussian weights, is to understand whether the proof approach in~\cite{Basteri2022} for the multivariate setting could be extended to the random-field level. The second is to develop improved rates (in potentially weaker topologies, but still at the random field level) by combining our techniques with those in~\cite{Hanin2021}. Both directions are intriguing but appear to be non-trivial at the random field level, and we leave them as future work to investigate.

\textsl{Heavy-tailed weights}: Motivated by constructing priors for Bayesian inference for neural networks, ~\citet[Section 2.2]{Neal1996} also heuristically examined the limits of single layer neural networks with the entries of the weight matrices being stable random variables. Recently, several works~\citep{der2005beyond,jung2021alpha,Bordino2022,lee2022deep,fortuin2022bayesian,favaro2023deep,Bordino2022} showed that the limits of such neural networks (including deep ones) converge weakly to appropriately defined stable random fields. An interesting question that arises is whether one can establish quantitative distributional approximation bounds in the heavy-tailed  setting. Our work provides a step in this direction. Indeed, our main result in Theorem~\ref{thm:strongermetric} is immediately applicable. The remaining challenge will be in establishing a version of Theorem~\ref{thm:stnsmoothbd} for stable random fields. This could potentially be accomplished by extending recent works, for example,~\cite{xu2019stable,arras2022somea,arras2022someb,chen2023multivariate}, on multivariate stable approximations to the random field setting.

The rest of the article is organized as follows.  Section~\ref{sec:smth} defines and develops properties of our smoothing Gaussian process, which are then used in Section~\ref{sec:wass} to prove our general smoothing result, Theorem~\ref{thm:strongermetric}. Section~\ref{sec:firstterm} develops Stein's method for Gaussian processes, culminating in Theorem~\ref{thm:stnsmoothbd}, which is used to bound $\mathsf{d}_\cF$. Section~\ref{sec:2and3terms} uses classical quantitative chaining arguments along with heat kernel bounds to prove Lemma~\ref{lem:momepsreg}, which gives an easily applied method for bounding $\IE\norm{F-F_\eps}_\infty$. Finally, Section~\ref{sec:widednn} uses the theory developed in the previous sections to prove our wide neural network approximation results, Theorems~\ref{thm:w1bound} and~\ref{thm:rateimprovment}.

\vspace{0.1in}
\noindent \textbf{Acknowledgments.} We thank Volker Schlue for discussions regarding certain differential geometric aspects and Max Fathi for the suggestion to look at the Laplacian for smoothing. This project originated at the ``Stein's Method: The Golden Anniversary'' workshop organized by the Institute for Mathematical Sciences at the National University of Singapore in June--July, 2022. We thank the institute for the hospitality and the organizers for putting together the stimulating workshop. KB was supported in part by National Science Foundation (NSF) grant DMS-2053918.

\section{Gaussian smoothing for random fields indexed by the sphere}\label{sec:smth}
We begin by constructing our Gaussian smoothing random field, with its covariance defined based on the powers of Laplacian operators, and specifying its Cameron-Martin space. 

\subsection{Constructing a covariance and Cameron-Martin space from the Laplacian}\label{sec:constructcov}
To define our smoothing Gaussian random field, we construct a covariance function based on the Laplacian on the $n$-sphere $\cS^n$, which we view as embedded in $\IR^{n+1}$,
\[
\cS^n=\clc{x\in \IR^{n+1}: \norm{x}_2=1}.
\]
A standard way to define the Laplacian on the sphere is to ``lift'' functions $f:\cS^n \to \IR$ to $\tilde f:\IR^{n+1}\setminus \{0\} \to \IR$ by
\be{
\tilde f(x) = f\clr{x/\norm{x}_2}.
}
Letting $\widetilde \Delta$ denote the usual Laplacian on $\IR^{n+1}$, we can then define the 
Laplacian $\Delta$ acting on twice differential functions on $\mathcal{S}^n$ by 
\be{
\Delta f (x)= \widetilde \Delta \tilde f(x), \,\,x\in\cS^n;
}
see, for example, \cite[Corollary~1.4.3]{dai2013approximation}. The negative of the Laplacian $(-\Delta)$ is a positive definite operator on $L^2(\cS^n; \IR)$ and has an orthonormal basis given by  \emph{spherical harmonics}. 
The eigenvalues of $(-\Delta)$ are 
\ben{\label{eq:lapeval}
\lambda_k=k(k+n-1),  \,\,\, k=0,1,2,\ldots,
}
 and an orthonormal basis for the eigenspace associated to $\lambda_k$ is given by a collection of polynomials $\sH_k=\bclc{\phi_k\sp{1},\ldots, \phi_k\sp{d_k}}$, with 
\ben{\label{eq:dkbd}
d_k\coloneqq\mathrm{dim} \, \sH_k = \frac{2k+n-1}{k} \binom{n+k-2}{k-1};
}
see, for example, \cite[Corollary 1.1.4]{dai2013approximation}. The union $\bigcup_{k \geq 0}\sH_k$, of the sets of all basis vectors for the $k^{th}$ eigenspace,
gives an orthonormal basis for $L^2(\cS^n; \IR)$. From here, we define the \emph{zonal harmonics}
\ben{\label{eq:zkmerc}
Z_k(x,y)= \sum_{j=1}^{d_k} \phi_k\sp{j}(x) \phi_k\sp{j}(y), 
}
that for $n\geq2$ satisfy 
\ben{\label{eq:zkdef}
Z_k(x,y) = \frac{\Gamma\bclr{(n+1)/2}(2k+n-1) }{2  \pi^{(n+1)/2}(n-1)} C_k^{(n-1)/2} \bclr{\angle{x,y}},
}
where $C_k^\lambda, \lambda>0, k \geq -1$ are the \emph{Gegenbauer polynomials} defined by the three term recurrence, for $x\in [-1,1]$,
\be{
C_{k+1}^\lambda(x) = \frac{2(k+\lambda)}{k+1} x C_k^\lambda(x) - \frac{k+2\lambda -1}{k+1} C_{k-1}^\lambda(x) \quad \mbox{for $k \geq 1$,}
} 
with initial values $C_{-1}^\lambda \equiv 0$ and $C_{0}^\lambda \equiv 1$. For $n=1$, 
%$d_k=2$ and $\phi_k\sp{1}(x)=\pi^{-1/2}\cos(k \theta_x )$, $\phi_k\sp2(x) =\pi^{-1/2}\sin(k \theta_x)$, where $x=(\cos(\theta_x), \sin(\theta_x))$, so that 
$Z_k(x,y)=\pi^{-1} \cos\bclr{k (\theta_x-\theta_y)}$, where $\theta_x,\theta_y$ are the polar angles of $x,y$, i.e., $x=(\cos(\theta_x),\sin(\theta_x))$. For our purposes, the key property of $Z_k$ is that
\ben{\label{eq:zkbds}
\abs{Z_k(x,y)} \leq Z_k(x,x) = \frac{\Gamma\bclr{(n+1)/2}}{2  \pi^{(n+1)/2}} d_k, 
}
see \cite[Corollary 1.2.7]{dai2013approximation}, noting their different normalization at (1.1.1) of the inner product on the sphere.
Thus, for any $\iota>0$, we can define the kernel $C\sp\iota=(C\sp\iota_{ij})_{i,j=1}^d$ on~$\cS^n$ by
\ben{\label{eq:smthcov}
C\sp\iota_{ij}(x,y)=\delta_{ij}\sum_{k\geq 1} \frac{ Z_k(x,y)}{\lambda_k^{n_\iota}},
}
where $n_\iota:=(n+\iota)/2$. Because\footnote{For two functions $f,g$, by $f\asymp g$ means that there exists absolute constants $c,C >0$ such that $c|g| \leq |f| \leq C |g|$.} $\lambda_k \asymp k^2$ and $d_k \asymp k^{n-1}$, by \eqref{eq:zkbds} we see that 
$\abs{Z_k(x,y)}$ is $\bigo(k^{n-1})$ uniformly, hence the sum \eqref{eq:smthcov}  converges absolutely and uniformly. Since each $Z_k$ is continuous and the sphere is compact, $C\sp\iota$ is continuous and positive definite due to  the decomposition~\eq{eq:zkmerc}. %We set $S$ to be the ``smoothing'' Gaussian process with covariance~$C$. 
We fix $n\geq 2$ and $\iota>0$, and set $C=C\sp\iota$ for the remaining part of this section. With our covariance kernel in hand, we define our smoothing random field $S$ and
its Cameron-Martin space. 

\begin{definition}[The Smoothing Gaussian random field $S$ and its Cameron-Martin space]%\label{def:CM}
Let $S$ be the centered $\IR^d$-valued Gaussian random field indexed by $\cS^{n}$ with covariance function $C$ given by~\eq{eq:smthcov}. %For $k=1,2,\ldots,$ let $\lambda_k=k(k+n-1)$ and $\sH_k=\bclc{\phi_k\sp{j}}_{j=1}^{d_k}$ be the the eigensystem of the negative of the Laplacian $(-\Delta)$. 
Let $\mathbf{e}_i\in \IR^d$, for $i=1\ldots, d$ be the standard basis vectors for $\IR^d$. The associated orthonormal decomposition for an  $h\in L^2(\cS^n; \IR^d)$ is given by
\ben{\label{eq:l2exp}
h= \sum_{i=1}^d \mathbf{e}_i \sum_{k\geq 1}\sum_{j=1}^{d_k}  h_{k,i}\sp{j} \phi_k\sp{j} \quad \mbox{where} \quad h_{k,i}^{(j)}=\int_{\mathcal{S}^n} h_i(x)\phi_k^{(j)}(x) \mathrm{d}x,
}
where $\mathrm{d}x$ is the volume measure on the sphere.
We define the \emph{Cameron-Martin} or \emph{Reproducing Kernel Hilbert} space $H$ of $S$ to be the subset of $L^2(\cS^n; \IR^d)$  defined by
\ben{\label{eq:cmspdef}
H=\bbbclc{h\in L^2(\cS^n; \IR^d) :  \sum_{k\geq 1}\lambda_k^{n_\iota}\sum_{i=1}^d\sum_{j=1}^{d_k}(h_{k,i}\sp{j})^2<\infty},
}
equipped with
inner product
\be{
\angle{h, g}_H:=\sum_{k\geq 1}\lambda_k^{n_\iota}\sum_{i=1}^d\sum_{j=1}^{d_k}h_{k,i}\sp{j}g_{k,i}\sp{j}.
}
\end{definition}

There is the following alternative description of the Cameron-Martin space and inner product. 
We define the fractional Laplacian operator $(-\Delta)^\alpha$ for any $\alpha>0$ through the 
orthonormal basis $(-\Delta)^\alpha\phi_k\sp j:=\lambda_k^\alpha \phi_k\sp j$, and 
for $h:\cS^n\to \IR^d$
we write $(-\Delta)^\alpha h$ for the fractional-Laplacian applied coordinate-wise.
\begin{proposition}\label{prop:cmaltdes}
If $h, g\in L^2(\cS^n;\IR^d)$ are such that $(-\Delta) ^{\frac{1}{2}n_\iota}h, (-\Delta)^{\frac{1}{2}n_\iota} g \in L^2(\cS^n;\IR^d)$,
then $h,g \in H$ and
\be{
\angle{h,g}_H = \bangle{(-\Delta)^{\frac{1}{2}n_\iota} h, (-\Delta)^{\frac{1}{2}n_\iota} g }_{\scaleto{L^2(\cS^n;\IR^d)}{7.5pt}}.
}
\end{proposition}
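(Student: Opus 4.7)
The argument is essentially bookkeeping in the orthonormal basis $\{\mathbf{e}_i \phi_k^{(j)}\}_{i,k,j}$ of $L^2(\cS^n;\IR^d)$ together with Parseval's identity, since both the inner product on $H$ and the fractional Laplacian are defined spectrally through this basis.

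\emph{Step 1: membership in $H$.} I will use that $(-\Delta)^{n_\iota/2}$ is the unique closed self-adjoint operator on $L^2(\cS^n;\IR^d)$ that multiplies by $\lambda_k^{n_\iota/2}$ on the eigenspace spanned by $\sH_k$, and that its domain coincides with those $h\in L^2$ for which the formal series
\[
(-\Delta)^{n_\iota/2} h \;=\; \sum_{i=1}^d \mathbf{e}_i \sum_{k\geq 1}\sum_{j=1}^{d_k} \lambda_k^{n_\iota/2}\, h_{k,i}^{(j)}\, \phi_k^{(j)}
\]
converges in $L^2$ (the $k=0$ term vanishes since $\lambda_0=0$). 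By Parseval, this convergence is equivalent to
\[
\sum_{k\geq 1}\lambda_k^{n_\iota}\sum_{i=1}^d\sum_{j=1}^{d_k}\bclr{h_{k,i}^{(j)}}^{2} \;<\; \infty,
\]
which is precisely the defining condition~\eqref{eq:cmspdef} of $H$. The same argument applied to $g$ gives $h,g \in H$.

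\emph{Step 2: the inner-product identity.} Once membership is established, I read off the coefficients of $(-\Delta)^{n_\iota/2}h$ and $(-\Delta)^{n_\iota/2}g$ against the orthonormal basis $\{\mathbf{e}_i \phi_k^{(j)}\}$, which are $\lambda_k^{n_\iota/2} h_{k,i}^{(j)}$ and $\lambda_k^{n_\iota/2} g_{k,i}^{(j)}$, respectively. Parseval then yields
\[
\bangle{(-\Delta)^{n_\iota/2} h,\,(-\Delta)^{n_\iota/2} g}_{L^2(\cS^n;\IR^d)} = \sum_{k\geq 1}\lambda_k^{n_\iota}\sum_{i=1}^d\sum_{j=1}^{d_k} h_{k,i}^{(j)} g_{k,i}^{(j)},
\]
and the right-hand side is, by definition, $\angle{h,g}_H$. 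Absolute convergence of the double sum is immediate from Cauchy--Schwarz applied to the two $H$-summability conditions obtained in Step~1.

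\emph{Main obstacle.} There is no substantial difficulty: the proof is pure spectral bookkeeping. The only care required is in rigorously identifying the spectral definition of $(-\Delta)^{n_\iota/2}$ on $L^2(\cS^n;\IR^d)$ via functional calculus for unbounded positive self-adjoint operators, so that the hypothesis ``$(-\Delta)^{n_\iota/2} h \in L^2$'' is unambiguously equivalent to the summability condition in~\eqref{eq:cmspdef}; once this is in place, both claims of the proposition follow at once from Parseval.
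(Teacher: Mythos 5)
Your proposal is correct and follows the same route as the paper's proof: both proceed by spectral bookkeeping in the eigenbasis $\{\mathbf{e}_i\phi_k^{(j)}\}$ and conclude via Parseval. The only cosmetic difference is that you frame Step 1 in the language of functional calculus for the unbounded operator (domain $=$ set where the spectral series converges in $L^2$), whereas the paper reduces to $d=1$ and directly computes the basis coefficients $\int (-\Delta)^{n_\iota/2}h\,\phi_k^{(j)}\,\mathrm{d}x = \lambda_k^{n_\iota/2}h_k^{(j)}$ by term-wise integration and orthonormality; these are the same argument.
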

\begin{proof}
First note that
\be{
\bangle{(-\Delta)^{\frac{1}{2}n_\iota} h, (-\Delta)^{\frac{1}{2}n_\iota} g }_{\scaleto{L^2(\cS^n;\IR^d)}{7.5pt}} = \sum_{i=1}^d \int_{\cS^{n}} (-\Delta)^{\frac{1}{2}n_\iota} h_i(x) (-\Delta)^{\frac{1}{2}n_\iota} g_i(x) \, \mathrm{d}x.
}
Thus, by additivity, it suffices to show the result for $d=1$. Since $(-\Delta)^{\frac{1}{2}n_\iota}h\in L^2(\cS^n)$, we can compute the coefficients
in its $L^2(\cS^n)$ expansion~\eq{eq:l2exp} as
\ba{
\int (-\Delta)^{\frac{1}{2}n_\iota}h(x) \phi_k\sp{j}(x) \mathrm{d}x 
	&=\sum_{\ell\geq 1} \lambda_\ell^{\frac{1}{2} n_\iota}\sum_{i=1}^{d_k}h_{\ell}\sp{i} \int \phi_\ell\sp{i}(x)\phi_k\sp{j}(x) \mathrm{d}x \\
	& = \lambda_k^{\frac{1}{2}n_\iota} h_k\sp{j},
}
where the second equality follows from orthonormality. 
Thus, we have that 
\be{
\bangle{(-\Delta)^{\frac{1}{2}n_\iota} h, (-\Delta)^{\frac{1}{2}n_\iota} g }_{\scaleto{L^2(\cS^n;\IR^d)}{7.5pt}} = \sum_{k\geq1} \lambda_k^{n_\iota} \sum_{j=1}^{d_k} h_k\sp{j} g_k\sp{j} = \angle{h, g}_H. %\qedhere
}
\end{proof}

To explicitly state the Cameron-Martin change of measure formula for $S$, we first provide its 
Karhunen-Loeve expansion; see \cite[Chapter~3]{adler2007random}.

\begin{theorem}[Karhunen-Loeve Expansion for the Smoothing Gaussian random field $S$]\label{thm:KL}
%Let  $S$ be the $\IR^d$-valued Gaussian process indexed by $\cS^n$ with continuous covariance function $C$ given at~\eq{eq:smthcov}.  %For $k=1,2,\ldots,$ let $\lambda_k=k(k+n-1)$ and $\sH_k=\bclc{\phi_k\sp{j}}_{j=1}^{d_k}$ be the the eigensystem of the negative of the Laplacian $\clr{-\Delta}$.  
There exists $(\cZ_{k,i}\sp{j}: k\geq1, 1\leq j\leq d_k, 1\leq i \leq d)$ independent centered normal random variables with $\Var(\cZ_{k,i}\sp{j}) = \lambda_k^{-n_\iota}$ such that
\be{
S_i=\sum_{k\geq1} \sum_{j=1}^{d_k} \cZ_{k,i}\sp{j} \phi_{k}\sp{j},
}
where the convergence holds in $L^2$ and almost surely, uniformly on $\cS^n$.
\end{theorem}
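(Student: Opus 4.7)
My plan is to identify the $\cZ_{k,i}^{(j)}$ as $L^2$ projections of the coordinate processes $S_i$ onto the spherical harmonic basis, verify their joint law by a direct moment computation, and then establish the convergence of the resulting series in two stages: first in $L^2$ via Mercer's identity, and then uniformly and almost surely via the It\^o--Nisio theorem in $\rC(\cS^n;\IR)$.

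First I would define
\[
\cZ_{k,i}^{(j)} := \int_{\cS^n} S_i(x)\, \phi_k^{(j)}(x)\, \mathrm{d}x,
\]
which is well-posed because $S$ has continuous covariance $C^{(\iota)}$ on the compact set $\cS^n$ and hence admits a continuous (in particular jointly measurable) modification. As integrals of a jointly Gaussian field against deterministic test functions, the family $\{\cZ_{k,i}^{(j)}\}$ is jointly centered Gaussian. Using Fubini, the explicit form of $C^{(\iota)}$ in~\eqref{eq:smthcov}, the Mercer-type decomposition~\eqref{eq:zkmerc}, and $L^2(\cS^n;\IR)$ orthonormality of $\{\phi_k^{(j)}\}$, I would compute
\[
\IE\bcls{\cZ_{k,i}^{(j)}\, \cZ_{k',i'}^{(j')}} \;=\; \delta_{ii'}\,\delta_{kk'}\,\delta_{jj'}\, \lambda_k^{-n_\iota},
\]
so the $\cZ_{k,i}^{(j)}$ have the claimed variances and are pairwise uncorrelated, hence jointly independent.

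Next, setting $S_i^{(N)}(x) := \sum_{k=1}^N \sum_{j=1}^{d_k} \cZ_{k,i}^{(j)}\, \phi_k^{(j)}(x)$, an analogous cross-term calculation (using the identity $\IE[S_i(x)\cZ_{k,i}^{(j)}] = \lambda_k^{-n_\iota}\phi_k^{(j)}(x)$, derived from~\eqref{eq:smthcov} and orthonormality) gives
\[
\IE\bcls{\clr{S_i(x) - S_i^{(N)}(x)}^2} \;=\; C_{ii}^{(\iota)}(x,x) - \sum_{k=1}^N \frac{Z_k(x,x)}{\lambda_k^{n_\iota}} \;=\; \sum_{k>N}\frac{Z_k(x,x)}{\lambda_k^{n_\iota}},
\]
which tends to zero uniformly in $x$ by the bound $Z_k(x,x)/\lambda_k^{n_\iota} \lesssim k^{-1-\iota}$ obtained from~\eqref{eq:zkbds},~\eqref{eq:dkbd} and~\eqref{eq:lapeval} (this is precisely the Mercer convergence already invoked to define $C^{(\iota)}$). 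This yields both $L^2(\Omega)$-convergence pointwise in $x$ and the identification of the limit with $S_i(x)$.

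The main obstacle is promoting this to \emph{uniform} almost sure convergence on $\cS^n$. Let $T_k(x) := \sum_{j=1}^{d_k} \cZ_{k,i}^{(j)}\, \phi_k^{(j)}(x)$, so that $\sum_k T_k$ is a series of independent centered Gaussian elements in the separable Banach space $\rC(\cS^n;\IR)$. By the It\^o--Nisio theorem, almost sure convergence in $\rC(\cS^n;\IR)$ is equivalent to convergence in probability, which in the Gaussian setting follows from tightness together with the covariance convergence already established. To obtain tightness together with the $L^1$ control $\IE\|R_N\|_\infty\to 0$ for the tail $R_N:=\sum_{k>N}T_k$, I would apply a Dudley-type chaining estimate to the Gaussian process $R_N$, using the uniform variance bound above and Lipschitz control of its covariance in the geodesic metric on $\cS^n$ (stemming from smoothness of the spherical harmonics and the rapid decay $\lambda_k^{-n_\iota}$). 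The same chaining bound, applied to the full series, also produces the continuous modification of $S$ implicitly used to define the $\cZ_{k,i}^{(j)}$ at the outset.
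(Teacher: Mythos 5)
The paper does not prove Theorem~\ref{thm:KL}; it simply points to \cite[Chapter~3]{adler2007random} for the general Karhunen--Loeve theorem for continuous Gaussian fields on a compact metric space. Your argument is a correct, self-contained reconstruction of that standard proof: defining $\cZ_{k,i}^{(j)}$ as $L^2$ projections of $S_i$ onto the spherical harmonics, using the Mercer decomposition~\eqref{eq:zkmerc} and orthonormality to verify that they are uncorrelated Gaussians (hence independent) with the claimed variances, getting pointwise $L^2$ convergence from the uniformly summable tail $Z_k(x,x)/\lambda_k^{n_\iota}=\bigo(k^{-1-\iota})$ via~\eqref{eq:zkbds},~\eqref{eq:dkbd},~\eqref{eq:lapeval}, and then upgrading to uniform almost sure convergence by combining a Dudley chaining estimate on the tail $S-S^{(N)}$ (whose sup-variance vanishes and whose canonical metric is dominated by that of $S$) with the It\^o--Nisio theorem for sums of independent symmetric random elements of the separable Banach space $\rC(\cS^n;\IR)$. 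One small wrinkle: the remark that convergence in probability ``follows from tightness together with the covariance convergence'' is vaguer than it needs to be; what actually drives the conclusion is the $L^1$ bound $\IE\|S-S^{(N)}\|_\infty\to0$ that you derive from chaining in the very next sentence, which gives convergence in probability directly, so the tightness language is superfluous. With that clarification the proof is sound, and your closing observation---that the same entropy bound, applied with the full covariance $C^{(\iota)}$, yields the continuous version of $S$ needed to make the defining integrals of $\cZ_{k,i}^{(j)}$ legitimate in the first place---correctly resolves the apparent circularity.
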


With this result, we have the following natural definition. 
\begin{definition}[Paley-Wiener integral]
For $h\in H$ with $L^2$ expansion~\eqref{eq:l2exp}, the Paley-Wiener integral  with respect to $S$ is the centered normal random variable with variance $\angle{h,h}_H$ given by
\be{
\angle{h, S}_H :=\sum_{i=1}^d \sum_{k\geq1} \lambda_k^{n_\iota} \sum_{j=1}^{d_k} \cZ_{k,i}\sp{j} h_{k,i}\sp{j},
}
where the $\cZ_{k,i}\sp{j}$ are as in Theorem~\ref{thm:KL}.
\end{definition}

We can now 
formally state the Cameron-Martin change of measure formula for $S$, which follows from an application of a theorem of \cite{kakutani1948equivalence}
for absolute continuity of infinite product measure.

\begin{theorem}[Cameron-Martin change of measure for the Smoothing Gaussian random field $S$]\label{thm:cm}
%Let  $S$ be the $\IR^d$-valued Gaussian process indexed by $\cS^n$ with continuous covariance function $C$ given at~\eq{eq:smthcov}.   
For any $h\in H$, $\law(h+S)$ has Radon-Nikodym derivative with respect to $\law(S)$, given by
\be{
\frac{d\law(h + S)}{d\law(S)} = \exp\bclc{\angle{h,S}_H - \tfrac{1}{2} \angle{h,h}_H}.
}
\end{theorem}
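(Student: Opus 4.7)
The plan is to prove this via Kakutani's theorem, leveraging the Karhunen--Loeve expansion of Theorem~\ref{thm:KL} to reduce the problem to an infinite product of one-dimensional Gaussian absolute continuity statements. Concretely, I would identify the law of $S$ with an infinite product Gaussian measure on coefficient space through the isometric embedding $S\mapsto (\cZ_{k,i}\sp j)$ from Theorem~\ref{thm:KL}. Since $h\in H\subset L^2(\cS^n;\IR^d)$ has $L^2$ expansion $h=\sum_i \mathbf{e}_i\sum_{k,j} h_{k,i}\sp j \phi_k\sp j$, the process $h+S$ is identified with the sequence of independent Gaussians having means $h_{k,i}\sp j$ and variances $\lambda_k^{-n_\iota}$. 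Thus $\law(h+S)$ corresponds to the infinite product $\mu_h = \bigotimes_{k,i,j} \cN(h_{k,i}\sp j,\lambda_k^{-n_\iota})$ while $\law(S)$ corresponds to $\mu_0=\bigotimes_{k,i,j}\cN(0,\lambda_k^{-n_\iota})$.

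Next I would invoke Kakutani's dichotomy: these product measures are either equivalent or mutually singular, and are equivalent iff the Hellinger affinity $\prod_{k,i,j}\int \sqrt{d\cN(h_{k,i}\sp j,\lambda_k^{-n_\iota})\, d\cN(0,\lambda_k^{-n_\iota})}$ is strictly positive. A direct computation gives each Hellinger factor as $\exp\{-\tfrac18 \lambda_k^{n_\iota}(h_{k,i}\sp j)^2\}$, so positivity is equivalent to $\sum_{k,i,j}\lambda_k^{n_\iota}(h_{k,i}\sp j)^2<\infty$, which is precisely the condition defining $H$ in~\eqref{eq:cmspdef}. Under this equivalence, Kakutani's theorem also yields the Radon--Nikodym derivative as the almost sure limit of the finite-product densities $\prod_{k,i,j}\exp\{\lambda_k^{n_\iota} h_{k,i}\sp j \cZ_{k,i}\sp j - \tfrac12 \lambda_k^{n_\iota}(h_{k,i}\sp j)^2\}$, whose exponent matches $\angle{h,S}_H-\tfrac12\angle{h,h}_H$ by definition of the Paley--Wiener integral.

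To finish, I need to confirm that the sequence $(\cZ_{k,i}\sp j)$ indeed determines $S$ measurably (so that densities transfer between the coefficient and path pictures): this follows because the map $S\mapsto(\cZ_{k,i}\sp j)$ defined via $L^2$ inner products against $\phi_k\sp j$ is a measurable bijection onto the natural coefficient space of full measure, using the a.s.\ uniform $L^2$ convergence in Theorem~\ref{thm:KL}. I also need that the Paley--Wiener series defining $\angle{h,S}_H$ converges a.s.\ and in $L^2$; this follows from Kolmogorov's two-series theorem, since the partial sums are independent centered Gaussians with variances summing to $\sum \lambda_k^{n_\iota}(h_{k,i}\sp j)^2=\angle{h,h}_H<\infty$.

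The main obstacle is the bookkeeping between the path-space law of $S$ and the product measure on coefficient space; once that identification is in place, Kakutani's theorem and a standard Gaussian computation deliver the formula immediately. There is no analytic hard part, only the need to be careful that the Radon--Nikodym derivative produced on coefficient space lifts to the claimed path-space derivative, which is legitimate because the coefficient map is a measurable bijection onto a set of full $\mu_0$- (hence, by equivalence, $\mu_h$-) measure.
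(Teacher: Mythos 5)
Your proof is correct and takes exactly the approach the paper indicates: the paper itself states only that Theorem~\ref{thm:cm} ``follows from an application of a theorem of Kakutani for absolute continuity of infinite product measure,'' and your argument fills in precisely that route, passing to Karhunen--Loeve coordinates, computing the Hellinger factors $\exp\{-\tfrac18\lambda_k^{n_\iota}(h_{k,i}\sp{j})^2\}$, matching the convergence condition to the definition of $H$ in~\eqref{eq:cmspdef}, and identifying the resulting product density with $\exp\{\angle{h,S}_H-\tfrac12\angle{h,h}_H\}$ via Definition~2.6.
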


\subsection{Regularization to the Cameron-Martin space}
In the previous section, we provided the Cameron-Martin change of measure 
formula for our smoothing Gaussian random field $S$, but it only applies to functions $f\in \rC(\cS^n;\IR^d)$ that are \nr{in the Cameron-Martin space $H$ defined at~\eqref{eq:cmspdef}, or, according to  Proposition~\ref{prop:cmaltdes}, that are} sufficiently smooth. Thus, we define the $\eps$-regularization of $f$ by 
\ben{\label{eq:genrep}
f_\eps(x) = \bclr{f_{\eps,i}(x)}_{i=1}^d = \bclr{e^{\eps \frac{\Delta}{2}} f_i(x)}_{i=1}^d = \sum_{i=1}^d \mathbf{e}_i \sum_{k\geq 1} e^{-\frac{\eps \lambda_k}{2}} \sum_{j=1}^{d_k} f_{k,i}\sp{j} \phi_{k}\sp{j}(x). 
}
The $\eps$-regularized $f_\eps(x)$ equals $\IE\bcls{f(B\sp{x}_\eps)}$, where $(B_t\sp{x})_{t\geq0}$ is a $d$-dimensional Brownian motion run on the sphere started from $x$; see \cite{bakry2014analysis}.
The next proposition uses this representation of 
 $f_\eps$ in terms of the ``heat kernel'' for Brownian motion, which will be
useful to derive smoothness properties.
\begin{proposition}\label{prop:htker}
Let 
\ben{\label{eq:htker}
p(x,y;\eps) = \sum_{k\geq1} e^{-\frac{\eps \lambda_k}{2}} \sum_{j=1}^{d_k} \phi_k\sp{j}(x) \phi_k\sp{j}(y)= \sum_{k\geq1} e^{-\frac{\eps \lambda_k}{2}}Z_k(x,y),
}
using the definition of $Z_k$ in~\eq{eq:zkmerc}. Then for any bounded and measurable $f:\cS^n\to\IR^d$, 
\ben{\label{eq:alternative}
f_{\eps,i}(x) = \int_{\cS^n} p(x,y; \eps) f_i(y) \mathrm{d}y.
}
\end{proposition}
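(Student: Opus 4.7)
The plan is to start from the series definition of $f_{\eps,i}$ in \eqref{eq:genrep}, substitute the formula for the coefficients $f_{k,i}^{(j)}$, and then interchange the order of summation and integration to recognize the heat kernel $p(x,y;\eps)$. Concretely, I would write
\[
f_{\eps,i}(x) = \sum_{k\geq 1} e^{-\eps \lambda_k/2}\sum_{j=1}^{d_k}\phi_k^{(j)}(x)\int_{\cS^n} f_i(y)\phi_k^{(j)}(y)\,\mathrm{d}y,
\]
swap sum and integral to get
\[
f_{\eps,i}(x)=\int_{\cS^n}\Bigl(\sum_{k\geq 1} e^{-\eps \lambda_k/2}\sum_{j=1}^{d_k}\phi_k^{(j)}(x)\phi_k^{(j)}(y)\Bigr)f_i(y)\,\mathrm{d}y,
\]
and then use the definition of $Z_k$ in \eqref{eq:zkmerc} to identify the bracketed expression as $p(x,y;\eps)$, yielding~\eqref{eq:alternative}.

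The only nontrivial step is justifying the interchange (equivalently, applying Fubini's theorem). For this I would bound the integrand in absolute value using the key estimate \eqref{eq:zkbds}, namely $|Z_k(x,y)|\leq Z_k(x,x)=\tfrac{\Gamma((n+1)/2)}{2\pi^{(n+1)/2}}d_k$. Combined with the asymptotics $\lambda_k=k(k+n-1)\asymp k^2$ and $d_k\asymp k^{n-1}$ from~\eqref{eq:lapeval} and~\eqref{eq:dkbd}, this yields
\[
\sum_{k\geq 1} e^{-\eps \lambda_k/2}\sum_{j=1}^{d_k}|\phi_k^{(j)}(x)\phi_k^{(j)}(y)|\;\leq\;\sum_{k\geq 1} e^{-\eps \lambda_k/2}\bigl(Z_k(x,x)Z_k(y,y)\bigr)^{1/2}\leq C\sum_{k\geq 1}k^{n-1}e^{-\eps k^2/2}<\infty,
\]
where the first inequality uses Cauchy--Schwarz in the finite-dimensional eigenspace. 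Since $f$ is bounded and the volume measure on $\cS^n$ is finite, the double sum/integral is absolutely convergent, which validates Fubini and hence the interchange.

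The main (mild) obstacle is simply making sure the bound on $\sum_j |\phi_k^{(j)}(x)\phi_k^{(j)}(y)|$ actually holds; here I rely on the Cauchy--Schwarz step combined with $Z_k(x,x)=\sum_j\phi_k^{(j)}(x)^2$ from~\eqref{eq:zkmerc} and the explicit bound~\eqref{eq:zkbds}. Once that is in place, the result follows immediately and the representation $f_\eps(x)=\IE[f(B_\eps^x)]$ alluded to after~\eqref{eq:genrep} is recovered by recognizing $p(\cdot,\cdot;\eps)$ as the transition density of Brownian motion on the sphere.
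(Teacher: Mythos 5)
Your argument is correct and is essentially the same as the paper's, which starts from $\int_{\cS^n} p(x,y;\eps) f(y)\,\mathrm{d}y$, interchanges sum and integral via Fubini (citing \eqref{eq:zkbds} and \eqref{eq:dkbd}), and recognizes the series definition \eqref{eq:genrep}; you simply run the identity in the reverse direction. Your Cauchy--Schwarz step bounding $\sum_j |\phi_k^{(j)}(x)\phi_k^{(j)}(y)|$ by $Z_k(x,x)$ (using that $Z_k(x,x)$ is constant on $\cS^n$) is, if anything, slightly more explicit than the paper's appeal to $|Z_k(x,y)|\leq Z_k(x,x)$, since it controls the termwise absolute value over $j$ rather than only the absolute value of the finite sum, which is what absolute convergence for Fubini strictly requires.
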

\begin{proof}
Dropping the subscript $i$, we have by~\eq{eq:htker},\eq{eq:zkbds},~\eq{eq:dkbd}, and Fubini's theorem that
\ba{
\int_{\cS^n} p(x,y; \eps) f(y) \mathrm{d}y 
	& = \sum_{k\geq1} e^{-\frac{\eps \lambda_{k}}{2}} \int_{\cS^n}f(y) Z_{k}(x,y) \mathrm{d}y  \\
	&= \sum_{k\geq1}  e^{-\frac{\eps \lambda_{k}}{2}}\sum_{j=1}^{d_k}f_{k}\sp{j} \phi_{k}\sp{j}(x),
}
which is the same as~\eq{eq:genrep}.
\end{proof}

We are now in position to derive bounds on $\norm{(-\Delta)^\alpha f_\eps}$.
\begin{proposition}\label{prop:lapmbd}
If $f:\cS^n\to \IR^d$ is bounded and measurable, then for any $\alpha>0$, $(-\Delta)^\alpha f_\eps$ exists and  $(-\Delta)^\alpha f_\eps\in L^2(\cS^n;\IR^d)$.
Moreover, there is a constant $c=c(n,\alpha)$ depending only on $n$ and $\alpha$ such that
\be{
\norm{(-\Delta)^\alpha f_{\eps,i}}_\infty \leq c\,   \norm{f_i}_\infty \eps^{-(2\alpha+n)/2}. 
}
\end{proposition}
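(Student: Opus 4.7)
\textbf{Proof proposal for Proposition~\ref{prop:lapmbd}.} The plan is first to use the spectral definition of $(-\Delta)^\alpha$ to obtain existence and $L^2$-membership of $(-\Delta)^\alpha f_\eps$, and then to use the heat-kernel representation in Proposition~\ref{prop:htker} to express $(-\Delta)^\alpha f_\eps$ as an integral against an explicit kernel, whose sup-norm over the diagonal we will bound directly via the eigenvalue/eigenspace dimension estimates $\lambda_k\asymp k^2$ and $d_k\asymp k^{n-1}$.

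Since $f$ is bounded on $\cS^n$ we have $f\in L^2(\cS^n;\IR^d)$ with expansion \eqref{eq:l2exp}, and one defines
\be{
(-\Delta)^\alpha f_{\eps,i} \coloneqq \sum_{k\geq 1} \lambda_k^\alpha e^{-\eps\lambda_k/2}\sum_{j=1}^{d_k} f_{k,i}\sp{j}\phi_k\sp{j}.
}
Existence and $L^2$-membership follow from Parseval and the trivial bound $\sup_k \lambda_k^{2\alpha} e^{-\eps\lambda_k} \leq (2\alpha/\eps)^{2\alpha} e^{-2\alpha}$, which yields $\norm{(-\Delta)^\alpha f_{\eps,i}}_{L^2}\leq (2\alpha/\eps)^\alpha e^{-\alpha}\norm{f_i}_{L^2}<\infty$. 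Writing out the Fourier coefficients $f_{k,i}\sp{j}=\int f_i(y)\phi_k\sp{j}(y)\mathrm{d}y$, swapping sum and integral, and invoking the Mercer-type identity \eqref{eq:zkmerc} then gives the integral representation
\be{
(-\Delta)^\alpha f_{\eps,i}(x) = \int_{\cS^n} K_\alpha(x,y;\eps)\,f_i(y)\,\mathrm{d}y, \qquad K_\alpha(x,y;\eps)\coloneqq \sum_{k\geq 1}\lambda_k^\alpha e^{-\eps\lambda_k/2} Z_k(x,y),
}
the swap being justified by the absolute and uniform convergence of the series in $k$ once the bound $|Z_k(x,y)|\leq Z_k(x,x)\asymp k^{n-1}$ from \eqref{eq:zkbds} is combined with the exponential factor $e^{-\eps\lambda_k/2}$.

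The core of the proof is a pointwise bound on $K_\alpha$. Using \eqref{eq:zkbds}, \eqref{eq:lapeval}, and \eqref{eq:dkbd} to write $|Z_k(x,y)|\leq Z_k(x,x)\leq c_n k^{n-1}$ and $\lambda_k \leq c'_n k^2$, we obtain
\be{
|K_\alpha(x,y;\eps)|\,\leq\, c_{n,\alpha} \sum_{k\geq 1} k^{2\alpha+n-1}\, e^{-\eps k^2/c''_n}.
}
Comparing the sum to the integral $\int_0^\infty s^{2\alpha+n-1} e^{-\eps s^2/c''_n}\mathrm{d}s$ and substituting $s=t/\sqrt{\eps}$ yields
\be{
\sum_{k\geq 1} k^{2\alpha+n-1} e^{-\eps k^2/c''_n}\leq C_{n,\alpha}\, \eps^{-(2\alpha+n)/2},
}
so $\sup_{x,y}|K_\alpha(x,y;\eps)|\leq C'_{n,\alpha}\,\eps^{-(2\alpha+n)/2}$. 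Combined with the integral representation and H\"older's inequality, this produces
\be{
\norm{(-\Delta)^\alpha f_{\eps,i}}_\infty\,\leq\, \norm{f_i}_\infty\,\sup_{x\in\cS^n}\int_{\cS^n}|K_\alpha(x,y;\eps)|\,\mathrm{d}y\,\leq\, C'_{n,\alpha}\,\mathrm{vol}(\cS^n)\,\norm{f_i}_\infty\,\eps^{-(2\alpha+n)/2},
}
which is the claimed inequality.

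The main obstacle I anticipate is not the ultimate estimate, which is elementary, but the careful justification of the formal manipulations: namely, swapping the spectral sum defining $(-\Delta)^\alpha$ with the integral over $\cS^n$ coming from the heat-kernel representation, and ensuring that all partial sums converge uniformly in $(x,y)$ so that $K_\alpha$ is a bona fide continuous kernel. Both issues are handled by the exponential factor $e^{-\eps\lambda_k/2}$ dominating the polynomial growth $\lambda_k^\alpha d_k\asymp k^{2\alpha+n-1}$, so the series is absolutely and uniformly convergent on $\cS^n\times \cS^n$ for each fixed $\eps>0$, allowing Fubini/dominated convergence to be invoked without issue.
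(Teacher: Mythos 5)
Your proof is correct and follows essentially the same route as the paper's: both arrive at the kernel $K_\alpha(x,y;\eps)=\sum_{k\geq 1}\lambda_k^\alpha e^{-\eps\lambda_k/2}Z_k(x,y)$ (you via unpacking the Fourier coefficients in the spectral definition, the paper via applying $(-\Delta_x)^\alpha$ under the integral to the heat kernel $p(x,y;\eps)$), both control $|Z_k|$ by the diagonal bound $Z_k(x,x)\asymp k^{n-1}$ from \eqref{eq:zkbds}, and both compare $\sum_k k^{2\alpha+n-1}e^{-c\eps k^2}$ with the corresponding Gaussian integral to extract the factor $\eps^{-(2\alpha+n)/2}$. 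The only cosmetic differences are that you record the explicit $L^2$ Parseval estimate (which the paper leaves implicit, since the sup-norm bound on a compact manifold gives $L^2$-membership anyway) and you bound $\sup_{x,y}|K_\alpha|$ and then integrate, whereas the paper integrates $|K_\alpha(x,\cdot;\eps)|$ directly; both yield the same order.
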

\begin{proof}
By~\eq{eq:zkdef} each (lifted) $Z_k(\cdot,y)$ is infinitely differentiable, with derivatives growing in absolute value at most polynomially in $k$. Thus, using~\eq{eq:htker}, that $\lambda_k\asymp k^2$, and dominated convergence, $(-\Delta_x)^\alpha p(x,y;\eps)$ is well-defined 
and 
\be{
(-\Delta_x)^{\alpha} p(x,y; \eps)=\sum_{k\geq1} e^{-\frac{\eps \lambda_k}{2}}(-\Delta_x)^\alpha Z_k(x,y).
}

Now, dropping the $i$ subscript, using~\eq{eq:htker} and~\eq{eq:zkmerc}, followed by~\eq{eq:zkbds},~\eq{eq:dkbd} to find $d_k=\bigo(k^{n-1})$ and \eq{eq:lapeval}, so as to apply dominated convergence, we have 
\ba{
\babs{(-\Delta)^{\alpha} f_{\eps}(x) }
	& =\bbbabs{ \int_{\cS^n} (-\Delta_x)^{\alpha} p(x,y; \eps) f(y) \mathrm{d}y } \\
	&\leq \norm{f}_\infty \int_{\cS^n} \babs{ (-\Delta_x)^{\alpha} p(x,y; \eps) } \mathrm{d}y \\
	&= \norm{f}_\infty \int_{\cS^n} \bbbabs{\sum_{k\geq1} e^{-\frac{\eps \lambda_k}{2}}(-\Delta_x)^\alpha Z_k(x,y) } \mathrm{d}y \\
	&= \norm{f}_\infty \int_{\cS^n} \bbbabs{\sum_{k\geq1} \lambda_k^\alpha e^{-\frac{\eps \lambda_k}{2}}Z_k(x,y) } \mathrm{d}y \\
	&\leq \norm{f}_\infty  \sum_{k\geq1} \lambda_k^\alpha e^{-\frac{\eps \lambda_k}{2}} d_k  \\
	&\leq c \norm{f}_\infty \sum_{k\geq1} k^{2\alpha+n-1} e^{-\frac{\eps k^2}{2}}.
	} 
By comparing this sum with 
\be{
\int_0^\infty (\eps/2)^{(2\alpha+n)/2} x^{2\alpha+n-1} e^{-\eps x^2/2} \mathrm{d}x = \tfrac{1}{2}\Gamma(\alpha+n/2),
}
we find
\be{
\babs{(-\Delta)^{\alpha} f_{\eps}(x) }\leq c \,  \eps^{-(2\alpha+n)/2} \norm{f}_\infty,
}
where $c$ is a constant depending only on $n$ and $\alpha$, as desired.
\end{proof}

\iffalse
\begin{proposition}[?Corrected? version of Proposition~\ref{prop:lapmbd}]\label{prop:lapmbdimp}
If $f:\cS^n\to \IR^d$ is bounded and measurable, then $\Delta^m f_\eps$ exists and  $\Delta^m f_\eps\in L^2(\cS^n;\IR^d)$.
Moreover, there is a constant $c=c(n,m)$ depending only on $n$ and $m$ such that
\be{
\norm{\Delta^m f_{\eps,i}}_\infty \leq c\,   \norm{f_i}_\infty \eps^{-(2m+n)/2} 
}
\end{proposition}

\begin{proof}     We have that 
    $\int_0^\infty (\eps/2)^{\beta/2} x^{\beta-1} e^{-\eps x^2/2} dx = \Gamma(\beta/2)/2$ and we have $d_k \lesssim k^{n-1}$. Hence, we have 
    \begin{align*}
        \sum_{k\geq1} \lambda_k^m e^{-\frac{\eps \lambda_k}{2}} d_k
        \lesssim \eps^{-(2m+n)/2} 
    \end{align*}

Hence, we find
\be{
\babs{\Delta^{m} f_{\eps}(x) }\lesssim \eps^{-(2m+n)/2}  \norm{f}_\infty,
}

\note{The reliance on $m$ in the above exponent, for $n=1$ is of order $\eps^{-m-0.5}$.}

\end{proof}
\fi

Propositions~\ref{prop:cmaltdes} and~\ref{prop:lapmbd} imply that $f_\eps\in H$ for bounded and measurable $f$, and also give the following lemma bounding $\abs{\angle{f_\eps, g_\eps}_H}$, whose proof is straightforward.
\begin{lemma}\label{lem:inprodbd}
If $f,g$ are bounded and measurable functions $\cS^n\to\IR^d$, then there is a constant $c=c\clr{n,\iota}$ depending
only on $n$ and $\iota$ such that 
\be{
\babs{\angle{f_\eps, g_\eps}_H} =\babs{\bangle{ (- \Delta)^{\frac{1}{2} n_\iota} f_\eps, (-\Delta)^{\frac{1}{2}n_\iota}  g_\eps}_{\scaleto{L^2(\cS^n;\IR^d)}{7.5pt}}} \leq c \, d\norm{f}_\infty \norm{g}_\infty \eps^{-(2n + \iota)}.
}
\end{lemma}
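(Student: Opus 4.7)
The first equality is a direct application of Proposition~\ref{prop:cmaltdes} once its hypothesis is checked. Since $f$ and $g$ are bounded and measurable, Proposition~\ref{prop:lapmbd} applied coordinate-wise with $\alpha=\tfrac{1}{2}n_\iota$ gives that $(-\Delta)^{\frac{1}{2}n_\iota}f_\eps$ and $(-\Delta)^{\frac{1}{2}n_\iota}g_\eps$ are bounded and in particular lie in $L^2(\cS^n;\IR^d)$, which is precisely what Proposition~\ref{prop:cmaltdes} requires.

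For the inequality, the plan is to bound the $L^2(\cS^n;\IR^d)$ pairing by the product of $\sup$-norms of the two factors, and then to invoke Proposition~\ref{prop:lapmbd} a second time. Expanding the inner product as a sum over the $d$ coordinates and bounding integrands pointwise by $\sup$-norms gives
\[
\babs{\bangle{(-\Delta)^{\frac{1}{2}n_\iota}f_\eps,(-\Delta)^{\frac{1}{2}n_\iota}g_\eps}_{L^2(\cS^n;\IR^d)}}\;\leq\;\nu(\cS^n)\sum_{i=1}^{d}\bnorm{(-\Delta)^{\frac{1}{2}n_\iota}f_{\eps,i}}_\infty\bnorm{(-\Delta)^{\frac{1}{2}n_\iota}g_{\eps,i}}_\infty.
\]
Applying Proposition~\ref{prop:lapmbd} with $\alpha=\tfrac{1}{2}n_\iota=(n+\iota)/4$ bounds each $\sup$-norm on the right by a constant times $\norm{f_i}_\infty\,\eps^{-(3n+\iota)/4}$ (respectively $\norm{g_i}_\infty\,\eps^{-(3n+\iota)/4}$). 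Multiplying the two factors, summing the $d$ coordinates, absorbing $\nu(\cS^n)$ and all constants into $c=c(n,\iota)$, and using $\norm{f_i}_\infty\leq\norm{f}_\infty$ and the analogous bound for $g$, produces a bound of the form $c\,d\,\norm{f}_\infty\norm{g}_\infty\,\eps^{-(3n+\iota)/2}$. Since $\eps\in(0,1)$ is the regime of interest and $(3n+\iota)/2\leq 2n+\iota$, the claimed inequality follows with slack to spare.

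No step poses a genuine obstacle: the argument is essentially two applications of Proposition~\ref{prop:lapmbd} glued together by Cauchy--Schwarz-type bounding of the $L^2$ pairing, and the only thing to be careful about is the verification of the hypothesis of Proposition~\ref{prop:cmaltdes}. In fact, the lemma's stated exponent $-(2n+\iota)$ is looser than necessary; an alternative route using self-adjointness of $(-\Delta)^{n_\iota}$ on the spherical-harmonic basis to place both powers of the fractional Laplacian onto $f_\eps$, combined with the contractivity $\norm{g_{\eps,i}}_\infty\leq\norm{g_i}_\infty$ (immediate from the probabilistic representation $g_\eps(x)=\IE g(B^x_\eps)$), yields the even better exponent $-(2n+\iota)/2$, so either bookkeeping suffices.
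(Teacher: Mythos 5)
Your argument is correct and is surely the intended ``straightforward'' proof the paper alludes to but omits: check the hypothesis of Proposition~\ref{prop:cmaltdes} via Proposition~\ref{prop:lapmbd} to get the equality, then bound the $L^2$ pairing pointwise by sup-norms and apply Proposition~\ref{prop:lapmbd} twice with $\alpha=\tfrac{1}{2}n_\iota=(n+\iota)/4$. Your bookkeeping in fact yields the strictly smaller exponent $\eps^{-(3n+\iota)/2}$ (and the self-adjointness variant you sketch, moving the full $(-\Delta)^{n_\iota}$ onto one factor and using that $e^{\eps\Delta/2}$ is an $L^\infty$-contraction, yields $\eps^{-(2n+\iota)/2}$), either of which is bounded above by the stated $\eps^{-(2n+\iota)}$ on the regime $\eps\in(0,1)$ used throughout the paper, so the lemma is simply a conservative version of what you prove; the one caveat is that the lemma itself does not restrict $\eps$, so the comparison of exponents tacitly uses $\eps<1$, but that is the only regime in which the lemma is ever invoked.
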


\subsection{Smoothing using $S$ and regularization}
We now use the $f_\varepsilon$ regularization given in the last section to define a $(\eps,\delta)$-regularized version of a test function  $\zeta$. The following result is an analog of \cite[Lemma~1.10]{Barbour2021}.
\begin{theorem}\label{thm:regzet}
Let $\zeta:\rC(\cS^n;\IR^d)\to \IR$ and, for $f:\cS^n\to\IR^d$ bounded and measurable, define
\be{
\zeta_{\eps,\delta}(f):= \IE[\zeta(f_\eps+ \delta S)],
}
where $f_\eps$ is the $\eps$-regularization defined at~\eq{eq:genrep}.  
If $\zeta$ is bounded or Lipschitz, 
then 
 $\zeta_{\eps,\delta}$ is infinitely differentiable. Moreover, for every $k \geq 0$ there is a constant $c$ depending only on $k$, $n$ and $\iota$, such that if $\zeta$ is bounded, then 
\be{
\norm{D^k \zeta_{\eps,\delta}}\leq c \, d^{k/2} \delta^{-k} \eps^{- k(n+\iota)} \norm{\zeta}_\infty,
}
and if  $\zeta$ is $1$-Lipschitz and $h:\mathcal{S}^n \rightarrow \mathbb{R}^d$ is bounded and measurable,
then
\ben{\label{eq:regzetderiv}
\norm{D^{k} \zeta_{\eps,\delta}(f)-D^{k} \zeta_{\eps,\delta}(h)}\leq c \,d^{k/2}  \delta^{-k} \eps^{-k(n+\iota)}\norm{f-h}_\infty.
}

\iffalse

Moreover, there is a constant $c$ depending only on $k$ and $n_\iota$, such that if $\zeta$ is bounded, then 
\ben{\label{eq:regzetbded}
\norm{D^k \zeta_{\eps,\delta}}\leq c \, d^{k/2} \delta^{-k} \eps^{- kn_{\iota}} \norm{\zeta}_\infty,
}
and if $\zeta$ has a bounded derivative $\norm{D\zeta}_\infty:=\sup_{f\in \rC(\cS^n;\IR^d)} \norm{D\zeta(f)}<\infty$, then 
\ben{\label{eq:regzetderiv}
\norm{D^{k+1} \zeta_{\eps,\delta}}\leq c \,d^{k/2}  \delta^{-k} \eps^{- kn_{\iota}} \norm{D\zeta}_\infty.
}

\fi

\end{theorem}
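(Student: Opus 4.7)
My plan is to apply the Cameron-Martin change of measure for $S$ (Theorem~\ref{thm:cm}) to absorb the dependence of $\zeta_{\eps,\delta}$ on its argument into a smooth exponential weight, then read off all derivatives by differentiating that weight in~$t$. By Propositions~\ref{prop:lapmbd} and~\ref{prop:cmaltdes}, for any bounded measurable $f, h$ we have $f_\eps, h_\eps \in H$, so applying Theorem~\ref{thm:cm} with shift $(t/\delta)h_\eps$ yields
\be{
\zeta_{\eps,\delta}(f+th) = \IE\bbcls{\zeta(f_\eps+\delta S)\exp\bclc{(t/\delta)\angle{h_\eps,S}_H - (t^2/(2\delta^2))\norm{h_\eps}_H^2}}.
}
The weight is the generating function of the probabilists' Hermite polynomials $\mathrm{He}_k(\cdot;\sigma^2)$ in the Gaussian $Y=\angle{h_\eps,S}_H$ with variance $\sigma^2=\norm{h_\eps}_H^2$, and the corresponding Taylor series converges uniformly in every $L^p(\IP)$ on compact $t$-intervals. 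This simultaneously yields infinite differentiability and the explicit formula
\be{
D^k\zeta_{\eps,\delta}(f)[h,\ldots,h] = \delta^{-k}\IE\bcls{\zeta(f_\eps+\delta S)\,\mathrm{He}_k(Y;\sigma^2)}.
}

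For the bounded case, Cauchy-Schwarz combined with the identity $\IE[\mathrm{He}_k(Y;\sigma^2)^2]=k!\,\sigma^{2k}$ gives $|D^k\zeta_{\eps,\delta}(f)[h,\ldots,h]| \leq \norm{\zeta}_\infty\sqrt{k!}\,\delta^{-k}\norm{h_\eps}_H^k$. Lemma~\ref{lem:inprodbd} then bounds $\norm{h_\eps}_H \leq c\sqrt{d}\,\norm{h}_\infty\,\eps^{-(n+\iota/2)}$, and since $\eps<1$ we have $\eps^{-k(n+\iota/2)}\leq \eps^{-k(n+\iota)}$, which delivers the stated operator-norm bound after taking the supremum over $\norm{h}_\infty=1$ and absorbing $\sqrt{k!}$ and other $k$-dependent constants into $c$.

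For the Lipschitz case, I subtract the above representation at $f$ and at $h$: the Hermite factor depends only on the direction and on $\eps$, so it is unchanged, and
\be{
D^k\zeta_{\eps,\delta}(f)[g,\ldots,g] - D^k\zeta_{\eps,\delta}(h)[g,\ldots,g] = \delta^{-k}\IE\bcls{\bclr{\zeta(f_\eps+\delta S) - \zeta(h_\eps+\delta S)}\mathrm{He}_k(\angle{g_\eps,S}_H;\norm{g_\eps}_H^2)}.
}
Since $\zeta$ is $1$-Lipschitz and $f\mapsto f_\eps$ is a sup-norm contraction (because $e^{\eps\Delta/2}$ is a Markov semigroup: $p(x,\cdot;\eps)\geq 0$ by Proposition~\ref{prop:htker}, and $e^{\eps\Delta/2}\mathbf{1}=\mathbf{1}$, so $\int p(x,y;\eps)\mathrm{d}y = 1$), we have $|\zeta(f_\eps+\delta S)-\zeta(h_\eps+\delta S)|\leq\norm{f_\eps-h_\eps}_\infty \leq \norm{f-h}_\infty$, and the remainder of the estimate proceeds exactly as in the bounded case.

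The main technical concern is verifying that the Gateaux-type derivatives computed above are genuine Frechet derivatives with the stated operator-norm bounds. This reduces to a uniform Taylor-remainder estimate on the exponential weight, controlled by Lemma~\ref{lem:inprodbd} and standard Gaussian moment bounds on $\angle{h_\eps,S}_H$; the rest is routine Hermite-polynomial bookkeeping.
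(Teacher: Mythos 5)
Your proposal is correct in substance and arrives at the same estimates as the paper, but by a recognizably different route. The paper shifts the entire argument $f_\eps/\delta$ into the Cameron--Martin weight, writes $\zeta_{\eps,\delta}(f)=\IE[\zeta(\delta S)e^{\Psi_\eps(f)}]$, and obtains a partition formula $D^k\zeta_{\eps,\delta}(f)[g^{(1)},\ldots,g^{(k)}]=\IE[\zeta(f_\eps+\delta S)\sum_{\pi\in\cP_{k,2}}\prod_{b\in\pi}\widehat D^{|b|}\Psieps(f)[g^b]]$ which it then estimates by H\"older. You instead shift only the direction $(t/\delta)h_\eps$, keeping $f_\eps$ inside $\zeta$, and read off the diagonal derivatives from the Hermite generating function; your formula $D^k\zeta_{\eps,\delta}(f)[h,\ldots,h]=\delta^{-k}\IE[\zeta(f_\eps+\delta S)\,\mathrm{He}_k(\langle h_\eps,S\rangle_H;\|h_\eps\|_H^2)]$ is exactly the paper's expression restricted to the diagonal (the sum over $\cP_{k,2}$ \emph{is} the Wick expansion of $\mathrm{He}_k$), but because the paper's operator norm is a diagonal supremum, this loses nothing. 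Your Cauchy--Schwarz step with $\IE[\mathrm{He}_k(Y;\sigma^2)^2]=k!\,\sigma^{2k}$ is arguably tidier than the Hölder/partition bookkeeping and gives the same $d^{k/2}\delta^{-k}\eps^{-k(n+\iota/2)}$ rate. The Lipschitz-difference step is handled identically in both proofs, via $\|f_\eps-h_\eps\|_\infty\le\|f-h\|_\infty$ and unchanged Hermite factor.

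The one place where you have glossed over genuine content is the passage from Gateaux to Fr\'echet differentiability. Your one-line claim that the Taylor series "simultaneously yields infinite differentiability" only gives real-analyticity of $t\mapsto\zeta_{\eps,\delta}(f+th)$ along each fixed line; the paper spends roughly a page on the inductive verification of~\eqref{eq:dkexp1}, precisely to establish the uniform-in-direction remainder estimates that turn the directional formulas into Fr\'echet derivatives with the claimed operator bounds. Your own machinery does supply a clean alternative: setting $g=th$ and expanding $\zeta_{\eps,\delta}(f+g)=\sum_{k\ge 0}(k!\,\delta^k)^{-1}\IE[\zeta(f_\eps+\delta S)\mathrm{He}_k(\langle g_\eps,S\rangle_H;\|g_\eps\|_H^2)]$, the $L^2$ Hermite identity and Lemma~\ref{lem:inprodbd} show the tail $\sum_{k\ge m+1}$ is $O(\|g\|_\infty^{m+1})$ uniformly in $f$, which is exactly the Fr\'echet remainder bound needed at each order. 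You should include that estimate rather than deferring it to "routine bookkeeping," since it is the technical heart of the theorem and the part the paper treats most carefully.
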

\begin{proof}
The proof is closely related to that of \cite[Lemma~1.10]{Barbour2021}, where
the Cameron-Martin inner product and $\eps$-regularization are simpler. Intuition behind
the manipulations below can be found there.

Firstly, $\zeta_{\eps,\delta}$ is clearly well-defined if $\zeta$ is bounded. If $\zeta$ is $C$-Lipschitz, then
\be{
\babs{\zeta_{\eps,\delta}(f)-\zeta(f_\eps)} \leq C \delta \IE\norm{S}_\infty <\infty,
}
where the last inequality is Fernique's theorem~\citep{fernique1970integrabilite}. Moreover, $\zeta_{\eps,\delta}$
is measurable since, from~\eq{eq:alternative}, $f\mapsto f_\eps$ is continuous with respect to $\sup$-norm, as is $(f,g)\mapsto f+ g$ in product topology.
Thus, $(f,s) \mapsto \zeta(f_\eps + \delta s)$ is measurable with respect to product topology.

We claim that for $\zeta$ bounded, $k\geq1$ and $g\sp{i}\in \rC(\cS^n;\IR^d)$, $i=1,\ldots, k$, we have
\ben{\label{eq:dkexp1}
D^{k} \zeta_{\eps, \delta}(f) [g\sp{1}, \ldots ,g\sp{k}]  
= \IE\bbbcls{  \zeta (\delta S) e^{\Psieps(f)}  \sum_{ \pi\in\cP_{k,2}}  
            \prod_{b\in \pi}  D^{| b |} \Psieps(f) [g\sp{b}]  },
}
where
\be{
\Psieps(f)= \tfrac{1}{\delta}\angle{f_\eps, S}_H-\tfrac{1}{2\delta^{2}}\angle{f_\eps, f_\eps}_H.
}
In \eqref{eq:dkexp1} $\cP_{k,2}$  is the set of all partitions of $\{1,\ldots,k\}$, whose blocks have at most $2$ elements; 
  $b\in\pi$ means that $b$ is a block of $\pi$,  and we denote its cardinality by $|b|$. When $b=\{i\}$ the expression $D^{| b |} \Psieps(f) [g\sp{b}] $ is defined as
  \be{
 D^{| b |} \Psieps(f) [g\sp{b}]= D \Psieps(f) [g\sp{i}] =\delta^{-1} \bangle{g_\eps\sp{i}, S- \delta^{-1} f_\eps}_H ,
   }
   and when $|b|=2$ is given by 
   $b=\{i_1,i_2\}$, then 
     \be{
 D^{| b |} \Psieps(f) [g\sp{b}]= D^{2} \Psieps(f) [g\sp{i_1}, g\sp{i_2}] = - \delta^{-2} \angle{g_\eps\sp{i_1}, g_\eps\sp{i_2}}_H,
   }   
which we note does not depend on $f$. Compare to \cite[Equation (2.11)]{Barbour2021} with $\Theta\equiv0$.
Assuming~\eq{eq:dkexp1}, the Cameron-Martin Theorem~\ref{thm:cm} implies  that 
\besn{\label{eq:dkexp2}
D^{k} \zeta_{\eps, \delta}(f) [g\sp{1}, \ldots ,g\sp{k}]  
= \IE\bbbcls{  \zeta (f_\eps +\delta S)   \sum_{ \pi\in\cP_{k,2}}  
            \prod_{b\in \pi}  \widehat D^{| b |} \Psieps(f) [g\sp{b}]  },
}
where  $\widehat D^2 \Psieps(f) = D^2 \Psieps(f)$, and 
\be{
\widehat D \Psieps(f)[g]= \delta^{-1} \bangle{g_\eps\sp{i}, S}_H\sim \mathrm{Normal}\bclr{0, \delta^{-2} \angle{g_\eps\sp{i}, g_\eps\sp{i}}_H},
}
and we note that $\widehat D^{|b|} \Psieps(f)$ does not depend on $f$ for $|b| \in \{1,2\}$. Technically, we are applying the Cameron-Martin change of measure formula to the joint distribution of the random variables $\bclr{\bangle{g_\eps\sp{i}, S- \delta^{-1} f_\eps}_H}_{i=1}^k$ and $(S-\delta^{-1} f_\eps)$, which follows in a straightforward way from Kakutani's theorem and the definition of the Paley-Wiener integral.

By Lemma~\ref{lem:inprodbd}, we have
\ben{\label{eq:ghbd}
\babs{\angle{g_\eps\sp{i_1}, g_\eps\sp{i_2}}_H}  \leq c\, d \norm{g\sp{i_1}}_\infty \norm{g\sp{i_2}}_\infty \eps^{-(2n+\iota)},
}
where $c$ is a constant depending only on $n_\iota$.
Thus, if $\zeta$ is bounded, we have 
\be{
\babs{D^{k} \zeta_{\eps, \delta}(f) [g\sp{1}, \ldots ,g\sp{k}]}
\leq \norm{\zeta}_\infty \sum_{\pi \in \cP_{k,2}}\mathop{\prod_{b\in \pi}}_{\abs{b}=2}\babs{\widehat D^{2} \Psieps(f) [g\sp{b}]}\IE \mathop{\prod_{b\in \pi}}_{\abs{b}=1} \babs{\widehat D \Psieps(f) [g\sp{b}]},  
}
and then the definition of $\widehat D^k \Psieps$,~\eqref{eq:ghbd}, and H\"older's inequality imply
\be{%\label{eq:bdbd}
\babs{D^{k} \zeta_{\eps, \delta}(f) [g\sp{1}, \ldots ,g\sp{k}]} \leq c \, d^{k/2} \delta^{-k} \eps^{- k(n +\iota)} \norm{\zeta}_\infty \prod_{i=1}^k \norm{g\sp{i}}_\infty, 
}
where $c$ depends on $k$ (through the sum over $\cP_{k,2}$ and the absolute moments up to order $k$ of standard normal variables) and $n_\iota$, as desired. 

Assume now  $\zeta$ is  $1$-Lipshitz, and letting  $f,h\in \rC(\cS^n;\IR^d)$ and recalling that $\widehat D^{\abs{b}} \Psieps(f)[g\sp{b}]=\widehat D^{\abs{b}} \Psieps(h)[g\sp{b}]$, 
\eq{eq:dkexp2} implies
\bes{
D^{k} \zeta_{\eps,\delta}(f)[g\sp{1}, \ldots ,g\sp{k}] -D^{k}& \zeta_{\eps,\delta}(h)[g\sp{1}, \ldots ,g\sp{k}] \\ 
 &= \IE\bbbcls{ \bclr{ \zeta (f_\eps +\delta S) -\zeta (h_\eps +\delta S)} \sum_{ \pi\in\cP_{k,2}}  
            \prod_{b\in \pi}  \widehat D^{| b |} \Psieps(f) [g\sp{b}]  },
}
and using that $\zeta$ is Lipschitz and~\eqref{eq:alternative}, we have 
\be{
\babs{\zeta (f_\eps +\delta S) -\zeta (h_\eps +\delta S)} \leq \norm{f_\eps-h_\eps}_\infty \leq \norm{f-h}_\infty.
}
With this,~\eqref{eq:regzetderiv} follows in exactly the same way as the bounded case.

To establish~\eq{eq:dkexp1}, we use induction.  For $k=1$, the Cameron-Martin Theorem~\ref{thm:cm} implies 
\be{
\zeta_{\eps, \delta}(f+g)- \zeta_{\eps, \delta}(f) = \IE\bcls{\zeta(\delta S) \clr{e^{\Psieps(f+g)}- e^{\Psieps(f)}}},
}
so by the bounded- or Lipschitz-ness of $\zeta$ and the Cauchy-Schwarz inequality, it  is enough to show that 
\ben{\label{eq:normglito}
\IE\bbcls{\bclr{e^{\Psieps(f+g)-\Psieps(f)}- 1-D\Psieps(f)[g]}^2} = \lito\bclr{\norm{g}_\infty^2}.
}
But
\be{
\Psieps(f+g)-\Psieps(f)=D\Psieps(f)[g] - \frac{1}{2\delta^2}\angle{g_\eps, g_\eps}_H,
}
with $D\Psieps(f)[g]\sim \mathrm{Normal}(-\delta^{-2} \angle{f_\eps,g_\eps}_H,\delta^{-2} \angle{g_\eps, g_\eps}_H)$,
and so straightforward computing shows
\ba{
&\IE\bbcls{\bclr{e^{\Psieps(f+g)-\Psieps(f)}- 1-D\Psieps(f)[g]}^2} \\
	 =~&e^{\delta^{-2} \angle{g_\eps, g_\eps}_H-2\delta^{-2} \angle{f_\eps,g_\eps}_H} + \bbbclr{1-\frac{\angle{f_\eps,g_\eps}_H}{\delta^2} }^2 + \frac{ \angle{g_\eps,g_\eps}_H}{\delta^2} \\
	\quad~&-2 e^{-\delta^{-2} \angle{f_\eps,g_\eps}_H}\bbbclr{1-\frac{ \angle{f_\eps,g_\eps}_H}{\delta^2}+ \frac{ \angle{g_\eps,g_\eps}_H}{\delta^2}}, 
}
which, using  Lemma~\ref{lem:inprodbd},  is easily seen to be  $\lito\bclr{\norm{g}_\infty^2}$, 
as desired. Compare to \cite[(2.12-15)]{Barbour2021}.

Assuming~\eq{eq:dkexp1} holds for $k$, we want to show it holds for $k+1$. We write
\ban{
D^{k} \zeta_{\eps, \delta}(f+g) &[g\sp{1}, \ldots ,g\sp{k}]  - D^{k} \zeta_{\eps, \delta}(f) [g\sp{1}, \ldots ,g\sp{k}]  \notag   \\
	&=\IE\bbbcls{  \zeta (\delta S) \bclr{e^{\Psieps(f+g)}-e^{\Psieps(f)}}  \sum_{ \pi\in\cP_{k,2}}  
            \prod_{b\in \pi}  D^{| b |} \Psieps(f) [g\sp{b}]  } \label{eq:t1pr}  \\ 
           &\qquad + \IE\bbbcls{  \zeta (\delta S) e^{\Psieps(f)}  \sum_{ \pi\in\cP_{k,2}}  
            \bbclr{\prod_{b\in \pi}  D^{| b |} \Psieps(f+g) [g\sp{b}] -\prod_{b\in \pi}  D^{| b |} \Psieps(f) [g\sp{b}] }} \label{eq:t2pr}
}
Because of~\eq{eq:normglito}, the term~\eq{eq:t1pr} is equal to
\ben{\label{eq:t1fe}
\IE\bbbcls{  \zeta (\delta S) D\Psieps(f)[g] \sum_{ \pi\in\cP_{k,2}}  
            \prod_{b\in \pi}  D^{| b |} \Psieps(f) [g\sp{b}]  } + \lito\bclr{\norm{g}_\infty}.
}
Now working on~\eq{eq:t2pr}, noting that $D^2\Psieps(f+g)=D^2 \Psieps(f)$ and $D\Psieps(f+g)[h]=D\Psieps(f)[h]+D^2\Psieps(f)[h,g]$, we find 
\ba{
\sum_{ \pi\in\cP_{k,2}}  
            \bbclr{\prod_{b\in \pi}& D^{| b |} \Psieps(f+g) [g\sp{b}] -\prod_{b\in \pi}  D^{| b |} \Psieps(f) [g\sp{b}] } \\
            &=
\sum_{ \pi\in\cP_{k,2}}  
            \mathop{\prod_{b\in \pi}}_{\abs{b}=2}  D^{2} \Psieps(f) [g\sp{b}] \bbbclc{\mathop{\prod_{b\in \pi}}_{\abs{b}=1} \bbclr{D\Psieps(f) [g\sp{b}] + D^2\Psieps(f)[g\sp{b},g]}-\mathop{\prod_{b\in \pi}}_{\abs{b}=1} D\Psieps(f) [g\sp{b}]} \\ 
            &= \sum_{ \pi\in\cP_{k,2}}  
            \mathop{\prod_{b\in \pi}}_{\abs{b}=2}  D^{2} \Psieps(f) [g\sp{b}] \bbbclc{\mathop{\sum_{b\in \pi}}_{\abs{b}=1} D^2\Psieps(f)[g\sp{b},g] \mathop{\prod_{b\not=a \in \pi}}_{\abs{a}=1} D\Psieps(f)[g\sp{a}]} +\mathbf{o}\bclr{\norm{g}_\infty},
}
where $\mathbf{o}\bclr{\norm{g}_\infty}$ is a random variable, say $X=X(g)$ depending on $g$, such that $\IE\bcls{\abs{X}^p}^{1/p}=\lito\bclr{\norm{g}_\infty}$ for any $p\geq 2$. This is because $D\Psieps(f)[g\sp{b}]$ is Gaussian, and $D^2\Psieps(f)[g\sp{b}, g]=\bigo\bclr{\norm{g}_\infty}$, by Lemma~\ref{lem:inprodbd}. 
Thus, up to a $\lito\bclr{\norm{g}_\infty}$ term,~\eq{eq:t2pr} is equal to
\ben{\label{eq:t2fe}
\IE\bbbcls{  \zeta (\delta S) e^{\Psieps(f)}  \sum_{ \pi\in\cP_{k,2}}  
            \mathop{\prod_{b\in \pi}}_{\abs{b}=2}  D^{2} \Psieps(f) [g\sp{b}] \bbbclc{\mathop{\sum_{b\in \pi}}_{\abs{b}=1} D^2\Psieps(f)[g\sp{b},g] \mathop{\prod_{b\not=a \in \pi}}_{\abs{a}=1} D\Psieps(f)[g\sp{a}]} }.
            }
Combining~\eq{eq:t1fe} and~\eq{eq:t2fe} completes the induction.
% Assuming now only $\norm{D \zeta}_\infty<\infty$, 
% the argument of  \cite[(2.32-35)]{Barbour2021}  implies that for $k\geq0$,
%    \be{
% D^{k+1} \zeta_{\eps, \delta}(f) [g\sp{1}, ... , g\sp{k+1}]  
% = \IE\bbbcls{ D \zeta (\delta S)[g\sp{k+1}] e^{\Psieps(f)}  \sum_{ \pi\in\cP_{k,2}}  
%             \prod_{b\in \pi}  D^{| b |} \Psieps(f) [g\sp{b}]},
% }
% and again the Cameron-Martin Theorem~\ref{thm:cm} implies
%    \be{
% D^{k+1} \zeta_{\eps, \delta}(f) [g\sp{1}, ... , g\sp{k+1}]  
% = \IE\bbbcls{ D \zeta (\delta S+f_\eps)[g\sp{k+1}]  \sum_{ \pi\in\cP_{k,2}}  
%             \prod_{b\in \pi}  D^{| b |} \Psieps(f) [g\sp{b}]},
% }
% which implies the bound in the same way as~\eq{eq:bdbd}.
\end{proof}

\section{Proof of the Wasserstein bound}\label{sec:wass}

Armed with Theorem~\ref{thm:regzet}, we follow the strategy described in Section~\ref{sec:master} to prove our master theorem.

\begin{proof}[\textbf{Proof of Theorem~\ref{thm:strongermetric}}]
To achieve a bound in the Wasserstein distance, let $\zeta:\rC(\cS^n;\IR^d)$ be a Lipschitz function and let~$\zeta_{\eps,\delta}$ be defined as in Theorem~\ref{thm:regzet}. The triangle inequality yields
\ban{
\abs{\IE& \zeta(F)-\IE \zeta(H)} \notag \\
&\leq \babs{\IE\cls{\zeta_{\eps,\delta}(F)}-\IE\cls{\zeta_{\eps,\delta}(H)}} + \babs{\cls{\IE \zeta(F)}-\IE\cls{\zeta_{\eps,\delta}(F)}}  + \babs{\cls{\IE\cls{\zeta(H)}-\IE\zeta_{\eps,\delta}(H)}}. \label{eq:dwtin1}
}

For the first term of~\eq{eq:dwtin1}, we use~\eqref{eq:regzetderiv} of Theorem~\ref{thm:regzet} with $k=2$ and the definition \eqref{def:calF} of $\cF$ to find
\be{
\babs{\IE\cls{\zeta_{\eps,\delta}(F)}-\IE\cls{\zeta_{\eps,\delta}(H)}} \leq c \, d \, \delta^{-2} \eps^{-  2(n+\iota)} \mathsf{d}_{\cF}(F,H).
}

For the second term, using the definition of $\zeta_{\eps,\delta}$ and that $\zeta$ is Lipschitz implies
\ba{%\label{eq:step}
\babs{\IE\cls{\zeta(F)}-\IE\cls{\zeta_{\eps,\delta}(F)}}
    &= \babs{\IE\cls{\zeta(F)}-\IE\cls{\zeta(F_\eps + \delta S)}}  \leq \IE \norm{F-F_\eps}_\infty + \delta \IE \norm{S}_\infty, %\label{eq:f2epsdel}
}
where $S:\cS^n \to \IR^{d}$ is the smoothing Gaussian random field defined at~\eq{eq:smthcov}. Since $S$ has independent components and, by \eqref{eq:smthcov} and \eqref{eq:zkbds}, has covariance uniformly bounded in absolute value of order $c_n \sum_{k\geq 1} k^{-1-\iota}$ for some constant $c_n$ depending only on $n$, Fernique's theorem implies 
\be{%\label{eq:fernique}
\IE \norm{S}_\infty\leq c_{n,\iota} \, \sqrt{d},
}
where $c_{n,\iota}$ constant depending only on $n$ and $\iota$. Thus, we find
\be{
\babs{\IE\cls{\zeta(F)}-\IE\cls{\zeta_{\eps,\delta}(F)}} \leq  \IE \norm{F-F_\eps}_\infty  + c_{n,\iota} \delta \sqrt{d}.
}
The same reasoning shows that this same inequality holds with $H$ replacing $F$. Substituting these bounds in~\eqref{eq:dwtin1} verifies that the desired bound in~\eqref{eq:masterresult} holds.
\end{proof}

\section{Properties of Solution to the Stein Equation}\label{sec:firstterm}

Applications of Theorem \ref{thm:strongermetric}
require bounds on the first three terms of the right-hand side of~\eqref{eq:masterresult}. In this section, we bound the first term for the case when $H=G$, the approximating Gaussian field. We handle the second and the third terms in the following section, see Lemma~\ref{lem:momepsreg} in particular. 

We start with the following result, which extends the work of \cite[Section~2]{Barbour2021a} and provides  properties of solutions to infinite-dimensional versions of Stein's equation. Specifically,~\cite{Barbour2021a} worked with Stein's equations for Gaussian processes indexed by an interval $[0,T]$, whereas here we work with random fields indexed by a compact measured metric space.

\begin{theorem}[Bounds on solutions of the Stein equation]\label{thm:stnsmoothbd}
% Let $(\cM,\dist)$ be a compact metric space equipped with a finite
% Borel measure that is positive on open balls and
% $(G(x))_{x\in\cM}$ a continuous and centered $\IR^d$-valued Gaussian random field.
For a Gaussian random field $G \in \mathrm{C}(\cM; \IR^d)$, define the operator $\cA=\cA_G$ acting on $\zeta:\mathrm{C}(\cM; \IR^d) \to \IR$ with 
\be{
\max_{k=1,2} 
\sup_{g \in \mathrm{C}(\cM; \IR^d)} 
\norm{D^k\zeta(g)}<\infty,
}
by
\be{
\cA \zeta(f):=\IE\bcls{D^2\zeta (f)[G,G]} - D \zeta (f)[f],
}
where $f\in \mathrm{C}(\cM; \IR^d)$, and $D$ denotes Frech\'et derivative.
Then for any such $\zeta$, there exists an $\eta=\eta_\zeta$ satisfying 
\ben{\label{eq:intco}
 \cA  \eta(f) =\zeta (f) - \IE[\zeta (G)].
}
Moreover, in the operator norm, for any $k=1,2,$ or $k\geq 3$ 
with $\sup_{g \in \mathrm{C}(\cM; \IR^d)} \norm{D^k \zeta(g)} <\infty$, we have
\begin{align}\label{eq:etad2lip}
\norm{D^k \eta(f)} \leq \frac{1}{k} \,\sup_{g\in \mathrm{C}(\cM; \IR^d)} \,\norm{D^k \zeta(g)}, 
\end{align}
and for any $\zeta\in \cF$ and all $f,h \in C(\mathcal{M};\mathbb{R}^d)$, we have
\ben{\label{eq:etad2is}
\norm{D^2 \eta(f)-D^2 \eta(h)} \leq \frac{1}{3} \norm{f-h}_\infty.
}
%such that for any random field $F\in\rC(\cM;\IR^d)$, 
%\be{
% \babs{\IE[\zeta (F)] - \IE[\zeta (G)]}= \babs{\IE\cA  \eta(F)}.
%}
\end{theorem}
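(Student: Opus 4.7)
The plan is to use the familiar Ornstein--Uhlenbeck (Mehler) semigroup representation of the Stein solution, adapted to the infinite-dimensional random-field setting following the general strategy of \cite{Barbour2021a}. For any $\zeta$ with uniformly bounded first and second Fr\'echet derivatives, I would define
\be{
\eta(f) \coloneqq -\int_0^\infty \bbclr{\IE\bcls{\zeta\bclr{e^{-t}f + \sqrt{1-e^{-2t}}\,G}} - \IE[\zeta(G)]}\, dt,
}
where the inner expectation is with respect to an independent copy of $G$. Convergence of this integral has to be justified first: writing the integrand as $\IE[\zeta(e^{-t}f+ \sqrt{1-e^{-2t}}G)] - \IE[\zeta(e^{-t}G'+ \sqrt{1-e^{-2t}}G)]$ for an independent copy $G'$ of $G$ and applying the Lipschitz bound from $\sup_g \norm{D\zeta(g)} \leq \infty$ gives an integrand of size $\bigo(e^{-t})$, which is integrable on $[0,\infty)$.

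Next I would verify that $\eta$ solves the Stein equation \eqref{eq:intco}. Let $T_t f \coloneqq e^{-t}f + \sqrt{1-e^{-2t}}\,G$ and $u(t,f) \coloneqq \IE[\zeta(T_t f)]$. A direct computation, justified by dominated convergence using the bounded-derivative hypothesis on $\zeta$ together with Fernique's theorem for $G$, yields the Kolmogorov-type identity
\be{
\frac{\partial}{\partial t} u(t,f) = \IE\bcls{D^2\zeta(T_t f)[G,G]} - D\zeta(T_t f)[e^{-t}f]\cdot 1,
}
which after a chain-rule manipulation is exactly $(\cA_G u(t,\cdot))(f)$; here I use the crucial fact that if $G,G'$ are i.i.d.\ copies, then $\IE[D^2\zeta(T_t f)[G,G]]$ matches the action of $\cA_G$ applied to $u(t,\cdot)$. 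Integrating from $0$ to $\infty$ and using $u(0,f)=\zeta(f)$ and $u(\infty,f)=\IE[\zeta(G)]$ gives \eqref{eq:intco}.

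For the derivative bounds, I would differentiate under the integral to obtain, for $k \geq 1$,
\be{
D^k \eta(f)[g_1,\dots,g_k] = -\int_0^\infty e^{-kt}\, \IE\bcls{D^k\zeta(T_t f)[g_1,\dots,g_k]} \,dt.
}
Taking operator-norm supremum over $\norm{g_i}_\infty=1$, and using $\int_0^\infty e^{-kt}\,dt = 1/k$, immediately gives \eqref{eq:etad2lip}. For the Lipschitz-type bound \eqref{eq:etad2is} under $\zeta \in \cF$, subtracting the representations at $f$ and $h$ yields
\be{
D^2\eta(f)[g_1,g_2] - D^2\eta(h)[g_1,g_2] = -\int_0^\infty e^{-2t}\,\IE\bcls{D^2\zeta(T_t f)[g_1,g_2] - D^2\zeta(T_t h)[g_1,g_2]}\,dt,
}
and the $\cF$-Lipschitz property of $D^2\zeta$ bounds the integrand in absolute value by $e^{-2t}\norm{T_t f - T_t h}_\infty \norm{g_1}_\infty\norm{g_2}_\infty = e^{-3t}\norm{f-h}_\infty$, whose integral is $1/3$.

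The main technical obstacle is the rigorous differentiation under the integral and the interchange of $\IE$ with $D^k$ in the infinite-dimensional setting: one must verify that $f \mapsto \IE[\zeta(T_t f)]$ is $k$-times Fr\'echet differentiable with derivative $e^{-kt}\IE[D^k\zeta(T_t f)[\,\cdot\,]]$. This requires a careful dominated-convergence argument, using that $D^k\zeta$ is uniformly bounded (for $k=1,2$) or, for the Lipschitz step, the $\cF$-hypothesis on $D^2\zeta$, combined with the integrability of $\norm{G}_\infty$ from Fernique. The remaining verification that the integrand of $\eta$ genuinely defines a Fr\'echet derivative (rather than only a G\^ateaux derivative) is handled as in \cite{Barbour2021a}, by showing local uniform bounds in $f$ so that the directional derivatives assemble into a bounded multilinear form.
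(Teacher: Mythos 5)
Your proposal is correct and follows essentially the same route as the paper: the Mehler/Ornstein--Uhlenbeck representation $\eta(f)=-\int_0^\infty(\IE[\zeta(e^{-t}f+\sqrt{1-e^{-2t}}G)]-\IE[\zeta(G)])\,dt$, differentiation under the integral via dominated convergence (giving $D^k\eta$ as an $e^{-kt}$-weighted integral), $\int_0^\infty e^{-kt}\,dt=1/k$ and $\int_0^\infty e^{-3t}\,dt=1/3$ for the bounds, and the Gaussian integration-by-parts identity $\IE[D\phi(G)[G]]=\IE[D^2\phi(G)[G',G']]$ (with $G'$ an independent copy, justified via the Karhunen--Loeve expansion) to convert the $\tfrac{e^{-2t}}{\sqrt{1-e^{-2t}}}\IE[D\zeta(T_tf)[G]]$ term into $\IE[D^2\eta(f)[G,G]]$. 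One small point of sloppiness: your displayed ``Kolmogorov-type identity'' for $\partial_t u(t,f)$ is missing the factor $e^{-2t}$ in front of the second-derivative term, omits an expectation around $D\zeta(T_tf)[e^{-t}f]$, and should feature an independent copy $G'$ rather than the same $G$ inside $D^2\zeta(T_tf)[\cdot,\cdot]$; as written, the two sides do not match, though your surrounding prose (the ``chain-rule manipulation'' and the explicit invocation of the i.i.d.-copies fact) indicates you understand the correct structure, which the paper presents by differentiating $h_f(t)$ directly and identifying the two resulting terms rather than by stating a backward equation.
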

\begin{remark}\label{rem:stnmeth}
The operator $\mathcal{A}$ defined in Theorem \ref{thm:stnsmoothbd} plays the role of the left-hand side of the `random field' version 
\begin{align}\label{eq:process.Stein.equation}
\IE\bcls{D^2\eta (f)[G,G]} - D \eta (f)[f]=\zeta (f) - \IE[\zeta (G)]
\end{align}
of the finite dimensional Stein equation
for a centered Gaussian $G$ with covariance matrix \nr{$\Sigma=(\sigma_{ij})$ given by 
\be{
\sum_{i,j} \sigma_{ij} \partial_{ij}\eta(x) - \sum_{i} x_i \partial_i \eta(x) = \zeta(x)-\IE[\zeta(G)],
}
where $\partial_{i}$ and $\partial_{ij}$ denote the first and second partial derivatives respectively in coordinates $i,j$ .} With the Stein equation~\eqref{eq:process.Stein.equation} and the bounds on its solution provided by Theorem \ref{thm:stnsmoothbd}, the standard steps of Stein's method can be implemented. In particular, the integral probability metric bound to~$G$ over some given function class $\mathcal{H}$ can be computed by bounding the absolute expectation of the right-hand side of \eqref{eq:process.Stein.equation}
for $\zeta \in \mathcal{H}$ by taking absolute expectations on the left-hand side, given in terms of the solution~$\eta$.  In particular, uniformly bounding $\abs{\IE\cA  \eta_\zeta(F)}$ for all solutions $\eta_\zeta, \zeta \in \cF$ to \eqref{eq:process.Stein.equation} yields a bound on $\mathsf{d}_{\cF}(F,G)$, the first term on the right-hand side of~\eqref{eq:masterresult}. See Lemma \ref{lem:stnmethit} and its proof for the implementation.
\end{remark}
%If $G$ is a random field indexed by an uncountable set, we do not know how to write it  solely in terms of $C$.
\begin{remark}
The term $\IE\bcls{D^2\zeta (f)[G,G]}$ implicitly depends on the covariance $C$ of $G$. As discussed in Remark~\ref{rem:stnmeth}, if $G$ is finite-dimensional, then this term evaluates explicitly to $\nabla^\top \Sigma \nabla \eta(x)$, where $\Sigma$ is the covariance matrix of $G$. When $G$ is a random field indexed by an uncountable set, in general it is not clear how to write this term solely in terms of $C$. In applications, the term should be rewritten in a form that matches the particular application and does not involve an expectation against $G$. Typically, this form involves the covariance structure of $G$ and is most easily found using some structure of the random field $F$ to determine the first order term in a Taylor expansion of $\IE \bcls{D \eta (F)[F]}$. See Section~\ref{sec:widednn} for further details from our application to wide random neural networks, and also \cite{Barbour2021a} for applications that provide additional relevant examples.
\end{remark}

\begin{proof}[\textbf{Proof of Theorem~\ref{thm:stnsmoothbd}}]
The result essentially follows from the work of \cite[Section~2]{Barbour2021a}, building off \cite{barbour1990stein} and \cite{Kasprzak2017}, in the setting where the index set
of the process $f$ is the interval $[0,T]$.

Fix $\zeta$ with two bounded derivatives. For $f\in \rC(\cM;\IR^d)$ define $h_f:\IR_+\to \IR$ by $h_f(t):= \IE [\zeta(e^{-t} f + \sqrt{1-e^{-2t}}G)]$, and $\eta=\eta_\zeta$ to be
\be{
\eta(f)= -\int_0^\infty\bclr{ h_f(t)- \IE \cls{\zeta(G)}} \mathrm{d}t.
}
The integral is well-defined since $\zeta$ has bounded derivative, $\norm{f}_\infty$ is finite by continuity and compactness of $\cM$,  and $\norm{G}_\infty$ has finite moments, by Gaussianity and path continuity.
%We show that $\eta \in \cF$, and satisfies
%\be{
%\cA \eta (f)= \zeta(f) - \IE[\zeta(G)].
%}
That $\cA$ can be applied to~$\eta$ (meaning it has two bounded derivatives) follows essentially from \cite{barbour1990stein}, see also \cite{Kasprzak2017}, since dominated convergence implies that if $\sup_g \norm{D^k \zeta(g)} <\infty$, we have the well-defined expressions
\ben{\label{eq:derivex}
D^k \eta (f)[g_1,\ldots,g_k] =  -\int_0^\infty e^{-kt}\IE\bclc{D^k\zeta(e^{-t} f + \sqrt{1-e^{-2t}} G)[g_1,\ldots,g_k]}\mathrm{d}t,
}
from which the bounds in~\eqref{eq:etad2lip} easily follow.
For~\eqref{eq:etad2is}, if $\zeta\in\cF$ applying~\eqref{eq:derivex} with $k=2$ and \eqref{def:calF}, that assures elements of $\mathcal{F}$ to have a Lipschitz-1 second derivative, imply
\ba{
\babs{D^2 \eta(f)&[g,g]-D^2 \eta(h)[g,g]} \\
    &\leq \int_0^\infty e^{-2t}\IE\bclc{\babs{D^2\zeta(e^{-t} f + \sqrt{1-e^{-2t}} G)[g,g]-D^2\zeta(e^{-t} h + \sqrt{1-e^{-2t}} G)[g,g]}}\mathrm{d}t \\
    &\leq   \norm{g}_\infty^2 \int_0^\infty  e^{-2t} \norm{e^{-t}f-e^{-t} h}_\infty \mathrm{d}t \\
    &\leq \frac{1}{3}  \norm{f-h}_\infty \norm{g}_\infty^2.
}
We now show~\eqref{eq:intco}. 
%\be{\label{eq:intco}
%\zeta (f) - \IE[\zeta (G)]= \cA  \eta(f)%= \IE\bcls{D^2\eta (f)[G,G]} - D \eta (f)[f].
%}
We have
\ba{
\zeta (f) - \IE[\zeta (G)]&=-\int_0^\infty h_f'(t) \mathrm{d}t \\
	&= \int_0^\infty e^{-t} \IE\bcls{D\zeta(e^{-t} f + \sqrt{1-e^{-2t}}G)[f]} \mathrm{d}t \\
	&\qquad -\int_0^\infty \frac{e^{-2t}}{\sqrt{1-e^{-2t}}} \IE\bcls{D\zeta(e^{-t} f + \sqrt{1-e^{-2t}}G)[G]} \mathrm{d}t  \\
	& = -D\eta(f)[f] -\int_0^\infty \frac{e^{-2t}}{\sqrt{1-e^{-2t}}} \IE\bcls{D\zeta(e^{-t} f + \sqrt{1-e^{-2t}}G)[G]} \mathrm{d}t, 
	}
where the third equality follows by~\eq{eq:derivex}. Comparing to~\eq{eq:intco}, we 
will have shown the first claim if we can demonstrate that 
\be{
 \IE\bcls{D^2\eta (f)[G,G]}=-\int_0^\infty \frac{e^{-2t}}{\sqrt{1-e^{-2t}}} \IE\bcls{D\zeta(e^{-t} f + \sqrt{1-e^{-2t}}G)[G]} \mathrm{d}t.
}
Evaluating~\eq{eq:derivex} for $k=2$, the left-hand side of the previous display can be expressed as 
\be{
-\int_0^\infty e^{-2t} \IE\bcls{D^2\zeta(e^{-t} f + \sqrt{1-e^{-2t}} G)[G',G']} \mathrm{d}t,
}
where $G'$ is an independent copy of $G$.
But the integrands are equal since,  
for fixed $t$ and $f$ and $\phi(g)\coloneqq \zeta(e^{-t} f + \sqrt{1-e^{-2t}} g)$,~\cite[Proof of Proposition~2.1]{Barbour2021a} implies
\be{
\IE\bcls{D\phi(G)[G]} = \IE\bcls{D^2\phi(G)[G',G']},
}
and 
\ba{
D\phi(g)[g_1]&=\sqrt{1-e^{-2t}} D \zeta(e^{-t} f + \sqrt{1-e^{-2t}} g)[g_1],\\
D^2\phi(g)[g_1,g_2]&=(1-e^{-2t}) D^2 \zeta(e^{-t} f + \sqrt{1-e^{-2t}} g)[g_1,g_2]. 
}
Note that here we are using the Karhunen-Loeve expansion which is the part of the argument that uses the Borel measure~$\nu$ on our metric space $(\mathcal{M},\mathtt{d})$.
\end{proof}
There have been a number of recent works developing Stein's method for processes, predominantly in the context of distributional approximation by interval-indexed Gaussian processes, and especially Brownian motion; though see \cite{Gan2021} for an exception. 
Building from the seminal work of \cite{barbour1990stein},~\cite{shih2011steins} develops Stein's method in the very general setting of a Gaussian measure
on a separable Banach space. However, the bounds there are too abstract to be evaluated explicitly in practice. Closely following~\cite{shih2011steins}, the works~\cite{coutin2013stein,coutin2020stein,bourguin2020approximation}
provide more concrete results in the less general setting of a Gaussian measure on a Hilbert space. However, the associated probability metrics are with respect to the Hilbert space topology, e.g., $L^2$ and Sobolev, which are quite weak and do not see fundamental natural statistics such as finite dimensional distributions and extrema. The works of~\cite{kasprzak2020steina, kasprzak2020steinb,dobler2021stein}, based on \cite{barbour1990stein}, are more closely related to our work, but  work only in smooth function metrics like $\mathsf{d}_\cF$. We refer to~\cite[Section 1.1]{Barbour2021} for additional details and comparisons.

\section{Chaining arguments for modulus of continuity}\label{sec:2and3terms}
We now present results for bounding the second and third terms in~\eqref{eq:masterresult} that arise from the smoothing process. We start with a proposition that is useful for obtaining probabilistic bounds on the modulus of continuity of an $\IR^d$-valued random field on a compact metric space $(\cM,\dist)$.
\begin{definition}[Modulus of Continuity]\label{defn:modulus}
The modulus of continuity of a function $J: \cM \to \mathbb{R}^d$ at level $\theta>0$ is defined as
$\omega_J(\theta)\coloneqq\sup\bclc{\norm{J(x)-J(y)}_2: x,y \in \cM, \dist(x,y) < \theta}.$
\end{definition}

While the proofs below leverage standard chaining arguments, existing results seem not to provide the form of the results we require, as those mainly focus on expectation bounds and the case of $d=1$. 

Define the \emph{covering number} $\cN(\cM,\dist,\eps)$ (or just $\cN(\eps)$ when $(\cM,\dist)$ is clear from context) of $(\cM,\dist)$ at level $\eps>0$ as
the smallest cardinality over finite collections of points $U \subseteq \cM$ so that every point of $\cM$ is within $\eps$ of some point of $U$ (i.e., $U$ is an $\eps$-net).

\begin{proposition}\label{prop:modcontbd}
Let $(\cM,\dist)$ be a compact metric space and let $\clr{H(x)}_{x\in \cM}$ be an $\IR^d$-valued random field with continuous paths and write $H=(H_1,\ldots,H_d)$. Suppose there exist positive constants $c_0, \beta, \gamma$ and $c_1$ such that for any $x,y\in \cM$ and $i=1,\ldots, d$, we have
\ben{\label{eq:pairptbds}
\IP\bclr{\abs{H_i(x)-H_i(y)} \geq  \lambda} \leq c_0 \, \frac{\dist(x,y)^\beta}{\lambda^\gamma} \quad~\text{for all}~\lambda>0,}
and for every $\eps>0$ the covering numbers satisfy
\ben{ \label{eq:cov.num.alpha}
\cN(\eps)\leq c_1 \eps^{-\alpha}.
}
Then, if $\alpha<\beta/2$ there is a constant $c$ depending only on $\diam(\cM)$, $\alpha,\beta$, $\gamma, c_0, c_1$ such that for all $i=1,\ldots, d$ and $\theta>0$,
\ben{\label{eq:probbd2s}
\IP\bclr{\omega_{H_i}(\theta)> \lambda} \leq  c \,\frac{\theta^{\beta-2\alpha}}{\lambda^\gamma},
}
and for any $0<k< \gamma$,
 \ben{\label{eq:modcontmombd}
 \IE\bcls{\omega_H(\theta)^k} \leq  c \, d^{k/2} \, \theta^{k (\beta-2\alpha)/\gamma}.
 }
\end{proposition}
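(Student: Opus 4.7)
The plan is a classical chaining argument, tailored so that a base-level supremum provides the dominant $\theta^{\beta - 2\alpha}/\lambda^\gamma$ contribution. First, I would fix a dyadic family of nets: for $j \geq 0$ set $\eps_j := 2^{-j}\diam(\cM)$ and let $U_j \subset \cM$ be an $\eps_j$-net satisfying $|U_j| \leq c_1 \eps_j^{-\alpha}$ (from \eqref{eq:cov.num.alpha}). I would then choose projections $\pi_j : \cM \to U_j$ \emph{consistently}, in the sense that $\pi_j = \pi_j \circ \pi_{j+1}$, so that the pair $(\pi_j(x), \pi_{j+1}(x))$ takes at most $|U_{j+1}|$ distinct values as $x$ varies over $\cM$. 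Given $\theta > 0$, select $j_0$ with $\eps_{j_0} \leq \theta < \eps_{j_0 - 1}$. For $x, y \in \cM$ with $\dist(x,y) < \theta$, path continuity of $H$ and the triangle inequality give the decomposition
\begin{align*}
|H_i(x) - H_i(y)| &\leq |H_i(\pi_{j_0}(x)) - H_i(\pi_{j_0}(y))| \\
&\quad + \sum_{j \geq j_0}|H_i(\pi_{j+1}(x)) - H_i(\pi_j(x))| + \sum_{j \geq j_0}|H_i(\pi_{j+1}(y)) - H_i(\pi_j(y))|.
\end{align*}

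Next, I would bound each piece by a union bound combined with \eqref{eq:pairptbds}. Since $\dist(\pi_{j_0}(x),\pi_{j_0}(y)) \leq 3\theta$, the base-level term is dominated by $\sup\{|H_i(u) - H_i(v)| : u,v \in U_{j_0},\ \dist(u,v) \leq 3\theta\}$, and a union bound over the $\leq |U_{j_0}|^2 \lesssim \theta^{-2\alpha}$ such pairs yields probability $\lesssim \theta^{\beta - 2\alpha}/\lambda^\gamma$ of exceeding $\lambda/2$; the hypothesis $\alpha < \beta/2$ is precisely what keeps this exponent positive. For each chaining sum, I would allocate a geometric budget $a_j = C_0 \lambda \cdot 2^{-(j-j_0)\tau}$ with $\tau > 0$ small enough that $\tau\gamma < \beta - \alpha$, and apply
\begin{align*}
\IP\Big(\max_{u \in U_{j+1}} |H_i(u) - H_i(\pi_j(u))| > a_j\Big) \leq |U_{j+1}|\, c_0(2\eps_j)^\beta / a_j^\gamma,
\end{align*}
summing over $j \geq j_0$ to produce a total contribution $\lesssim \theta^{\beta - \alpha}/\lambda^\gamma$, which (for $\theta \leq 1$) is dominated by the base-level term. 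Combining delivers \eqref{eq:probbd2s}.

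For the moment bound \eqref{eq:modcontmombd}, I would reduce the vector case via $\omega_H(\theta) \leq \sqrt{d}\,\max_{1 \leq i \leq d}\omega_{H_i}(\theta)$, a consequence of $\|\cdot\|_2 \leq \sqrt{d}\|\cdot\|_\infty$ on $\IR^d$, then integrate the tail using $\IE[X^k] = \int_0^\infty k\lambda^{k-1}\IP(X > \lambda)\,d\lambda$, splitting at $\lambda_0 = \theta^{(\beta - 2\alpha)/\gamma}$. The hypothesis $k < \gamma$ makes the tail integral $\int_{\lambda_0}^\infty \lambda^{k-1-\gamma}\,d\lambda$ converge, and this choice of $\lambda_0$ balances the two pieces to give $\IE[\omega_{H_i}(\theta)^k] \lesssim \theta^{k(\beta-2\alpha)/\gamma}$. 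The main delicate points will be the consistent construction of the projections $\pi_j$ (so only $|U_{j+1}|$ rather than $|U_j|\cdot|U_{j+1}|$ pair-increments per level need to be controlled), and the simultaneous calibration of $a_j$ so that $\tau\gamma < \beta - \alpha$ while $\sum_{j \geq j_0} a_j = O(\lambda)$; once this balance is struck, it is the base-level supremum that dictates the final $\theta^{\beta - 2\alpha}/\lambda^\gamma$ rate, explaining the role of the condition $\alpha < \beta/2$.
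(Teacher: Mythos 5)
Your chaining argument reaches the tail bound \eqref{eq:probbd2s} correctly and is structurally the same as the paper's: both fix a base scale comparable to $\theta$, chain downward from a pair of base points, apply a per-level union bound, and sum a geometric series whose ratio involves $2^{2\alpha-\beta}$ (paper) or $2^{\alpha-\beta}$ (your consistent-projection variant). Your use of consistent projections $\pi_j = \pi_j\circ\pi_{j+1}$, which shrinks the per-level union bound from $|U_j|\cdot|U_{j+1}|$ pairs to $|U_{j+1}|$ links, is a modest refinement over the paper's $D_i(\rho)$, which is a supremum over all close pairs in $\cM_i$; but since the base-level term already costs $|U_{j_0}|^2 \asymp \theta^{-2\alpha}$, the final rate $\theta^{\beta-2\alpha}/\lambda^\gamma$ is the same in both treatments, and $\alpha<\beta/2$ is invoked for the same reason. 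Your telescoping via path continuity also matches the paper's density argument reducing $\omega_H$ to $\omega_H^*$ over $\bigcup_j U_j$.

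The genuine gap is in the $d$-dimensional moment bound \eqref{eq:modcontmombd}. You propose $\omega_H(\theta)\le \sqrt d\,\max_i\omega_{H_i}(\theta)$ and then integrating the tail. But to integrate the tail of the maximum you must union-bound over $d$ coordinates, which costs a factor of $d$ in the tail probability, and after optimizing or using your cutoff $\lambda_0 = \theta^{(\beta-2\alpha)/\gamma}$ this produces a bound of order $d^{k/2}\cdot d$ (or at best $d^{k/2+k/\gamma}$ with the optimal cutoff), not the required $d^{k/2}$. The paper instead uses
\begin{equation*}
\omega_H(\theta)^2 \le \sum_{i=1}^d \omega_{H_i}(\theta)^2, \qquad
\omega_H(\theta)^{2k}\le d^{k-1}\sum_{i=1}^d \omega_{H_i}(\theta)^{2k},
\end{equation*}
takes expectations termwise (each identically distributed), and then applies Jensen via $(\IE[\omega_H^k])^2 \le \IE[\omega_H^{2k}]$ to drop a square root; this delivers exactly $d^{k/2}$ because the sum over $d$ terms pairs with the power-mean factor $d^{k-1}$ to give $d^k$, of which the square root is $d^{k/2}$. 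Your max-based reduction cannot reach this rate because a maximum of $d$ heavy-tailed quantities necessarily picks up a $d$-dependent shift in the tail, whereas the paper's route averages rather than maximizes. You should replace the $\sqrt d\,\max_i$ reduction with the $\ell^2$/power-mean/Jensen route to close this.
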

\begin{proof}

%The proof of~\eq{eq:probbd2s} follows from standard chaining arguments.
Following \cite[Chapter~VII, Section 2, 9 Chaining Lemma]{pollard2012convergence}, we can construct
a nested sequence of subsets $\cM_0\subseteq \cM_1\subseteq \cM_2\subseteq \cdots \subseteq \cM$ such that 
every $t\in \cM$ is within $\diam(\cM) 2^{-i}$ of a point of $\cM_i$, and 
\ben{\label{eq:setbd}
\abs{\cM_i} \leq \cN\bclr{\diam(\cM)2^{-(i+1)}}\leq c_1 \frac{2^{\alpha(i+1)}}{\diam(\cM)^\alpha}.
}
For each $x\in \cM$, there is a sequence $(x_i)_{i\geq1}$ with $x_i\in \cM_i$ and $\lim_{i\to\infty} x_i = x$; in particular $\cM^*:=\bigcup_i \cM_i$ is dense in $\cM$.

We first show that for all $\theta>0$,
\be{
\IP\clr{w_H(\theta)> \lambda}=\IP\clr{w^*_H(\theta) > \lambda},
}
where $w^*_H(\theta) = \sup\bclc{\norm{H(x)-H(y)}_2: x,y \in \cM^*, \dist(x,y) \leq \theta}$ is the modulus of continuity 
of $H:\cM^*\to \IR$ at level $\theta$, that is, only considering points in $\cM^*$. 
The equality holds as the events are equal. Clearly $\clc{w^*_H(\theta) > \lambda} \subseteq \clc{w_H(\theta) > \lambda}$ since $\cM^*\subseteq \cM$. For the other direction,
if there are $x,y\in \cM$ with $\nr{\dist(x,y)<\theta}$ and $\nr{\norm{H(x)-H(y)}_2 > \lambda}$, then letting $x_i, y_i \in \cM_i$ such that $x_i \to x$ and $y_i\to y$, then \nr{$\dist(x_i,y_i)\to \dist(x,y)<\theta$} and continuity of $H$ implies that \nr{$\norm{H(x_i)-H(y_i)}_2 \to \norm{H(x)-H(y)}_2 >\theta$} and so there must be some $i$ with \nr{$\dist(x_i,y_i)<\theta$} and \nr{$\norm{H(x_i)-H(y_i)}_2>\lambda$}.

To bound $w_H^*(\theta)$, let $\theta>0$ be fixed and let $x,y$  be arbitrary points in $\cM^*$ satisfying $\dist(x,y)<\theta$. Since the $\cM_i$'s are nested, there exists $n$ such that $x,y\in \cM_{n+1}$. Further, there are sequences $(x_i)_{i=0}^{n}, (y_i)_{i=0}^{n}$ such that $x_i,y_i\in \cM_i$, and $\dist(x_i,x_{i+1})\vee \dist(y_i,y_{i+1}) \leq \diam(\cM) 2^{-i+1}$, letting $x_{n+1}:=x$ and $y_{n+1}:=y$. These sequences can be constructed sequentially, e.g., set $x_{n}$ to be the nearest point in $\cM_{n}$ to  $x$, which must be within $\diam(\cM) 2^{-{n}}$ since $\cM_{n}$ is a $\diam(\cM)2^{-n}$-net. Given that we know $x_{i+1}$, we choose $x_i$ to be the point  in $\cM_{i+1}$ that is closest to $x_i$. Since $\cM_i \subseteq \cM_{i+1}$, there must be such a point with distance no greater than $\diam(\cM) 2^{-i+1}$. 

Denoting the maximum change in $H$ over points in $\cM_i$ that are within $\diam(\cM)\rho$ by  %$\diam(\cM) 2^{-i+1}$ by 
\be{
D_{i}(\rho):=\sup\bclc{\norm{H(u)-H(v)}_2: u,v\in \cM_i, \, \dist(u,v) \leq \diam(\cM) \rho}.
}
Set $m=\lfloor - \log_2(\theta/{\rm diam}(\mathcal{M})) \rfloor$, implying in particular that $\theta \leq {\rm diam}(\mathcal{M})2^{-m}$. The triangle inequality implies that
\nr{
\ban{
\norm{H(x)-H(y)}_2& \leq \norm{H(x_m)-H(y_m)}_2 +  \sum_{i=m}^{n} \bbclr{\norm{H(x_{i+1}) - H(x_i)}_2  + \norm{H(y_{i+1}) - H(y_i)}_2} \notag \\
	&\leq  D_m\bclr{2^{-m+ 2}} + 2 \sum_{i=m}^{\infty}  D_i\bclr{2^{-i+1}},\label{eq:tnon}
}
}
where the $2^{-m+2}$ in the second inequality follows by the triangle inequality 
\ba{
\dist(x_m,y_m)& \leq  d(x,y) + \sum_{i=m}^{n}\dist(x_i,x_{i+1}) +  \sum_{i=m}^{n}\dist(y_i,y_{i+1}) < \theta + \sum_{i=m}^{n}\dist(x_i,x_{i+1}) +  \sum_{i=m}^{n}\dist(y_i,y_{i+1})\\
	&\leq  \diam(\cM) 2^{-m} + 2 \, \diam(\cM) \sum_{i=m}^{n} 2^{-i+1} 
	\leq 3\,\diam(\cM) 2^{-m}
	\leq \diam(\cM) 2^{-m+2}.
}
\nr{Noting~\eq{eq:tnon},} %for all   $x,y \in \cM^*$ for which $d(x,y)<\theta$,  
we set $\lambda_i=(1-a) a^{i-m}( \lambda/3)$  for $i\geq m$, for some $a\in(0,1)$ to be chosen later, and applying the union bound we have
\ben{ \label{eq:sum.for.whstar}
\IP\bclr{\omega_H^*(\theta)> \lambda} \leq \IP\bclr{D_m\bclr{2^{-m+ 2}} > \lambda/3}+  \sum_{i=m}^{\infty} \IP\bclr{D_i\bclr{2^{-i+ 1}} > \lambda_i}.
}
Now, again using a union bound,~\eq{eq:pairptbds} and~\eq{eq:setbd} yields that
\besn{
\IP\bclr{D_i(\rho)> \lambda}
&\leq \mathop{\sum_{u,v\in \cM_i}}_{ \dist(u,v)\leq \diam(\cM)\rho}\IP\bclr{\nr{\norm{H(u)-H(v)}_2}>\lambda} \\
&\leq c_0 \abs{\cM_i}^2 \frac{ \diam(\cM)^\beta \rho^\beta}{\lambda^\gamma} \notag \\
&\leq c_0 \, c_1^2\,   2^{2\alpha(i+1)}\frac{ \diam(\cM)^{\beta-2\alpha} \rho^\beta}{\lambda^\gamma},
}
and so for a constant $c'$ depending on $\diam(\cM)$, $\alpha,\beta$, $\gamma, c_0, c_1$, using that the first term in \eqref{eq:sum.for.whstar} can be bounded by a constant depending only on $\alpha, \beta,\gamma$ and $a$ times the bound on the first term of the sum that follows, we have
\be{
\IP\bclr{\omega_H^*(\theta)> \lambda} \leq c'  (1-a)^{-\gamma}  \lambda^{-\gamma}  \sum_{i=m}^\infty  \bbbclr{\frac{  2^{2\alpha-\beta} }{a^\gamma}}^i.
}
Since $2\alpha-\beta<0$, it is possible to choose $a$ such that $r:=2^{2\alpha - \beta}/a^\gamma<1$. So doing, we obtain
$$
\sum_{i=m}^\infty  \bbbclr{\frac{  2^{2\alpha-\beta} }{a^\gamma}}^i = (1-r)^{-1}r^m = (1-r)a^{-\gamma m}2^{(2\alpha - \beta)m} \leq (1-r)2^{(2\alpha - \beta)m},
$$
where we have used that $a \in (0,1)$ and is being raised to a positive power, so can be bounded by 1. %Thus, if $\beta>2\alpha$ we may choose $a$ so that $ 2^{2\alpha-\beta}a^{-\gamma}<1$, 
Recalling that $m=\floor{-\log_2(\theta/\diam(\cM))}$, we hence observe that there is a constant $c$
depending on $\diam(\cM)$, $\alpha,\beta$, $\gamma, c_0, c_1$, such that
\be{
\IP\bclr{\omega_H^*(\theta)> \lambda} \leq  c \, \frac{\theta^{\beta-2\alpha}}{\lambda^\gamma}. 
}

Now, we proceed to prove~\eq{eq:modcontmombd} starting with the case $d=1$. Letting $\tilde c$ be a constant that may vary from line to line, but will at most only depend on   $\diam(\cM), \alpha, \beta,\gamma,c_0,c_1$, the result easily follows from~\eq{eq:probbd2s}, since under our hypotheses that $0<k<\gamma$ we have
\begin{align}\label{eq:dis1}
    \begin{aligned}
    \IE\bcls{\omega_F(\theta)^k} 
	&= \int_0^\infty \IP\bclr{\omega_F(\theta)>\lambda^{1/k}} \mathrm{Leb}(\mathrm{d}\lambda) \\
	&= \int_0^{\theta^{k(\beta-2\alpha)/\gamma}} \IP\bclr{\omega_F(\theta)>\lambda^{1/k}} \mathrm{Leb}(\mathrm{d}\lambda) + \int_{\theta^{k(\beta-2\alpha)/\gamma}}^\infty \IP\bclr{\omega_F(\theta)>\lambda^{1/k}} \mathrm{Leb}(\mathrm{d}\lambda) \\
    & \leq \theta^{k(\beta-2\alpha)/\gamma} + \tilde c \int_{\theta^{k(\beta-2\alpha)/\gamma}}^\infty \frac{\theta^{\beta-2\alpha}}{\lambda^{\gamma/k}} \mathrm{Leb}(\mathrm{d}\lambda)  \\
    &\leq \tilde c \, \theta^{k(\beta-2\alpha)/\gamma},
    \end{aligned}
\end{align}
as desired.

Now, for general $d\geq 1$, it is clear from the definition of the modulus of continuity that 
\ben{\label{eq:temp1}
\omega^2_F(\theta) \leq \sum_{i=1}^d \omega^2_{F_i}(\theta). 
}
Raising both sides of~\eqref{eq:temp1} to any positive power $k \geq 1$, and using that $(\sum_i a_i)^k \leq d^{k-1} \sum_i a_i^k$ for non-negative $a_i$, we have
\begin{align*}
\omega^{2k}_F(\theta) \leq \left(\sum_{i=1}^d \omega^2_{F_i}(\theta)\right)^k \leq d^{k-1} \sum_{i=1}^d \omega^{2k}_{F_i}(\theta).
\end{align*}
Taking expectation on both sides and applying~\eqref{eq:dis1}, we have 
\begin{align*}
\left( \mathbb{E}[\omega^{k}_F(\theta)]\right)^2\leq \mathbb{E}[\omega^{2k}_F(\theta)]  \leq d^{k-1}  \sum_{i=1}^d  \mathbb{E}[\omega^{2k}_{F_i}(\theta)] \leq \tilde{c} d^{k}  \, \theta^{2k(\beta-2\alpha)/\gamma},%\qedhere
\end{align*}
and taking square roots yields the desired inequality.
\end{proof}

\begin{lemma}\label{lem:momepsreg} Let $\cM=\cS^n \subset\IR^{n+1}$ for some  $n\geq 2$, with natural geodesic metric~$\dist$, and  $H=(H_1,\ldots,H_d):\cS^n \to \IR^d$ be a random field with  continuous paths, and $H_\eps$ be the $\eps$-regularization of $H$ defined at~\eq{eq:genrep} for a fixed $0<\eps<1$. If for all $i=1,\ldots, d$, for all $x,y\in \cS^n$, some constant $\hat c$, and some $p>n$ we have
\ben{\label{eq:distmombd}
\IE\bcls{\bclr{H_i(x)-H_i(y)}^{2p}} \leq \hat c \, \dist(x,y)^{2p},
}
then there is a constant $c$ depending only on $\hat c, n$, and $p$, such that
\be{
\IE \norm{H - H_\eps}_\infty \leq c\, \sqrt{d} \, \eps^{\frac{1}{2}(1-\frac{n}{p})}\sqrt{\log(1/\eps)}.
}
\end{lemma}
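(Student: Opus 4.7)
The plan is to exploit the Brownian motion representation $H_\eps(x) = \IE^B[H(B_\eps^x)]$ from Proposition~\ref{prop:htker} (where $B^x$ is Brownian motion on $\cS^n$ started at $x$, independent of $H$), control the resulting differences by the modulus of continuity of $H$, and handle the rare event of a large Brownian displacement via a standard Gaussian-type heat kernel tail estimate on the sphere.

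First, conditioning on $H$ and applying Jensen's inequality together with the definition of the modulus of continuity,
\be{
\norm{H(x)-H_\eps(x)}_2 \leq \IE^B[\norm{H(x)-H(B_\eps^x)}_2] \leq \IE^B[\omega_H(\dist(x, B_\eps^x))].
}
Fix a truncation level $\theta \in (0,\pi)$ to be chosen. Since $\omega_H$ is nondecreasing and $\diam(\cS^n)=\pi$, splitting according to whether $\dist(x,B_\eps^x)$ is at most $\theta$ gives, for every $x$,
\be{
\IE^B[\omega_H(\dist(x, B_\eps^x))] \leq \omega_H(\theta) + \omega_H(\pi)\,\IP^B(\dist(x,B_\eps^x) > \theta).
}
By rotational symmetry of the heat kernel on $\cS^n$, the probability on the right equals a value $q(\theta,\eps)$ independent of $x$. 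Taking $\sup_x$ and then expectation in $H$ yields
\be{
\IE\norm{H-H_\eps}_\infty \leq \IE[\omega_H(\theta)] + \IE[\omega_H(\pi)]\,q(\theta,\eps).
}

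Next, I would apply Proposition~\ref{prop:modcontbd}. Markov's inequality converts the hypothesis $\IE[(H_i(x)-H_i(y))^{2p}] \leq \hat c\,\dist(x,y)^{2p}$ into the pairwise tail bound \eqref{eq:pairptbds} with $\beta=\gamma=2p$ and $c_0=\hat c$, while the standard covering estimate $\cN(\cS^n,\dist,\eps) \leq c_1\eps^{-n}$ gives $\alpha=n$. The hypothesis $p>n$ ensures $\alpha<\beta/2$, so \eqref{eq:modcontmombd} with $k=1$ gives $\IE[\omega_H(\theta)] \leq c\sqrt{d}\,\theta^{1-n/p}$ and in particular $\IE[\omega_H(\pi)] \leq c'\sqrt{d}$, for constants depending only on $\hat c$, $n$, $p$. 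The classical Gaussian-type upper bound for the heat kernel on a compact Riemannian manifold (see, e.g., \cite{Grigoryan2009}) gives $q(\theta,\eps) \leq C\exp(-c\theta^2/\eps)$, with $C,c$ depending only on $n$.

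Finally, I would choose $\theta = M\sqrt{\eps\log(1/\eps)}$ with $M=M(n,p)$ large enough that $q(\theta,\eps)$ is of smaller polynomial order in $\eps$ than $\eps^{(1-n/p)/2}$; the second term in the displayed bound is then absorbed into a constant multiple of the first. Using $(1-n/p)/2 \leq 1/2$ for $\eps \in (0,1)$,
\be{
\IE\norm{H-H_\eps}_\infty \leq c\sqrt{d}\,\bigl(M\sqrt{\eps\log(1/\eps)}\bigr)^{1-n/p} \leq c'\sqrt{d}\,\eps^{(1-n/p)/2}\sqrt{\log(1/\eps)},
}
as claimed. The only nontrivial technical point is the Gaussian-type tail for $\dist(x,B_\eps^x)$ on $\cS^n$; this is classical but, if preferred, can be obtained directly from the spectral expansion~\eqref{eq:htker} using the eigenvalue asymptotics $\lambda_k\asymp k^2$ from \eqref{eq:lapeval} and the zonal harmonic bound \eqref{eq:zkbds}, so it is unlikely to be a serious obstacle.
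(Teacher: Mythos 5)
Your proof is correct and follows essentially the same route as the paper's: split the heat-kernel (equivalently, spherical Brownian motion) smoothing by the distance threshold $\theta$, bound the near-region contribution by $\IE[\omega_H(\theta)]$ via Proposition~\ref{prop:modcontbd} with $\beta=\gamma=2p$ and $\alpha=n$, bound the far-region contribution by $\IE[\omega_H(\pi)]$ times a Gaussian heat-kernel tail, and optimize $\theta=\Theta(\sqrt{\eps\log(1/\eps)})$; the only cosmetic difference is your probabilistic phrasing via $\IE^B[H(B^x_\eps)]$ versus the paper's direct integral against $p(x,y;\eps)$ (the paper cites Nowak et al.\ for the tail $\int_{\dist(x,y)>\theta} p(x,y;\eps)\,\mathrm{d}y \leq c_n e^{-\theta^2/(5\eps)}$, which is exactly your $q(\theta,\eps)$). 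One caveat: your closing remark that the Gaussian tail ``can be obtained directly from the spectral expansion'' using only $\lambda_k\asymp k^2$ and $|Z_k(x,y)|\leq Z_k(x,x)$ is overstated, since those bounds are not distance-dependent and the series is not of one sign, so that route would need a Poisson-summation or sharper zonal-harmonic estimate — best to just cite the classical heat-kernel bound as you and the paper both do.
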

\begin{proof}
Using the alternative expression for $H_\eps$ given at~\eq{eq:alternative} in Proposition~\ref{prop:htker}, for any given $\theta>0$ we immediately have
\ba{
H(x)-H_\eps(x) &= \int_{y: d(x,y)\leq \theta} p(x,y;\eps) \bclr{H(x)-H(y)} \mathrm{d}y +\int_{y: d(x,y)> \theta} p(x,y;\eps) \bclr{H(x)-H(y)} \mathrm{d}y,
}  
where $\mathrm{d}y$ is the volume element on the sphere. It is easy to see that $\omega_{H}(\theta)$ is finite because $H$ is continuous and the sphere is compact. Hence, we can further bound
\ba{
\bnorm{H(x)-H_\eps(x)}_2
    &\leq \omega_{H}(\theta) +\sup_{u,v\in\cS^n} \bnorm{H(u)-H(v)}_2 \int_{y: d(x,y)> \theta} p(x,y;\eps)  \mathrm{d}y \\
    &= \omega_{H}(\theta) +\omega_{H}(\pi) \int_{y: d(x,y)> \theta} p(x,y;\eps)  \mathrm{d}y.
}
The heat kernel bounds of \cite[Theorem~1]{Nowak2019} imply
\be{
\int_{y: d(x,y)> \theta} p(x,y;\eps)  \mathrm{d}y \leq c_n \, e^{-\theta^2/(5\eps)},
} 
where $c_n$ is a constant depending only on $n$. Hence, we have that 
%\be{
%\IE \norm{F-F_\eps}_\infty \leq \IE\bcls{\omega_{F}(\theta)} +\IE\bcls{\omega_{F}(\pi)} \int_{y: d(x,y)> \theta} p(x,y;\eps)  dy.
%}
% and so we altogether have
\ben{\label{eq:epregintbd}
\IE \norm{H-H_\eps}_\infty \leq \IE\bcls{\omega_{H}(\theta)} + c_n \IE\bcls{\omega_{H}(\pi)} e^{-\theta^2/(5\eps)}.
}
To bound $\IE\cls{\omega_H(\theta)}$ we apply Proposition~\ref{prop:modcontbd}, and  use Markov's inequality to find that
\be{
\IP\bclr{\abs{H_i(x)-H_i(y)} \geq \lambda} \leq \frac{\IE\bcls{\clr{H_i(x)-H_i(y)}^{2p}}}{\lambda^{2p}}.
}
Therefore~\eq{eq:pairptbds} is satisfied with $\beta=\gamma=2p$ and $c_0=\hat c$, due to our assumption~\eqref{eq:distmombd}.
To bound the covering numbers, standard volume arguments (see, for example, \cite[Corollary 4.2.13]{Vershynin2018}) imply that for all $\eps\in (0,1)$, we have 
\be{
\cN(\cS^n,\dist,\eps)\leq c_n \, \eps^{-n},
}
where $c_n$ is a constant depending only on $n$, thus \eqref{eq:cov.num.alpha} is satisfied with $\alpha=n$. Applying \eqref{eq:modcontmombd} of Proposition~\ref{prop:modcontbd} we find that there exists a constant $c$, whose value from line to line may change, but depends that only on $\hat c, n$, and $p$, such that
\be{
\IE\bcls{\omega_{H}(\theta)} \leq c \, \sqrt{d}\,  \theta^{1-\frac{n}{p}}.
}
Substituting this inequality in~\eq{eq:epregintbd} and setting $\theta=\sqrt{l \eps \log(1/\eps)}$ and  $l 
\geq 5(1-\frac{n}{p})/2$, 
we conclude that
\be{
\IE \norm{H-H_\eps}_\infty \leq c \, \sqrt{d} \, \eps^{\frac{1}{2}\clr{1-\frac{n}{p}}} \sqrt{\log\clr{1/\eps}}.
\qedhere
}
\end{proof}

\section{Proofs for wide random neural network approximations}\label{sec:widednn}
We now apply the general results developed in the previous sections to prove Theorems~\ref{thm:w1bound} and~\ref{thm:rateimprovment} on the smooth and Wasserstein distance bounds for wide random neural network. We follow the strategy based on induction as previously described in Section~\ref{sec:appdnn}. We first present the following result, obtained by applying Theorem~\ref{thm:stnsmoothbd} at a given, single layer of the network. One key element driving the result is the use of the classical Stein `leave-one-out' approach, see \eqref{eq:loo}.

\begin{lemma}\label{lem:stnmethit}
Let  $H:\cM\to \IR^m$ be a random field with continuous and i.i.d.\ coordinate 
 processes $H_1,\ldots,H_m$, and let $W:\IR^{m}\to \IR^{n}$ be an
$n\times m$ random matrix that is independent of $H$ and has centered independent entries having the same variance $\Var(W_{ij})=:c_w/m$, also satisfying $\IE[W_{ij}^4]\leq B (c_w/m)^2$, and $\sigma:\IR\to\IR$.
Define $F:\cM\to \IR^n$ by
\be{
F(x) = W \sigma\bclr{H(x)},
}
and assume $F\in L^2(\cM;\IR^n)$.
Let $G\in\rC(\cM;\IR^d)$ be a centered Gaussian random field with covariance function
\be{
C_{ij}(x,y):=\IE\bcls{F_i(x)F_j(y)}=\delta_{ij} c_w \IE\bcls{\sigma\bclr{H_1(x)}\sigma\bclr{H_1(y)}}.
}
Then for any $\zeta\in \cF$, we have
\ben{ \label{eq:mainbd.llem:stnmethit}
\babs{\IE\cls{\zeta(F)}-\IE\cls{\zeta(G)}}\leq  c_w^{3/2} B^{3/4} \IE\bcls{\norm{\sigma(H_1)}_\infty^3} \frac{n^{3/2}}{\sqrt{m}}.
}
\end{lemma}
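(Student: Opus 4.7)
The plan is to apply the Stein equation from Theorem~\ref{thm:stnsmoothbd}: for $\zeta\in\cF$, let $\eta=\eta_\zeta$ solve $\cA\eta = \zeta - \IE[\zeta(G)]$, so that
\[
\babs{\IE[\zeta(F)]-\IE[\zeta(G)]} = \babs{\IE[D^2\eta(F)[G,G]] - \IE[D\eta(F)[F]]}.
\]
By~\eqref{eq:etad2lip} and~\eqref{eq:etad2is}, $D\eta$ and $D^2\eta$ are bounded in operator norm and $D^2\eta$ is $\tfrac{1}{3}$-Lipschitz in $\sup$-norm. The strategy is a classical Stein ``leave-one-out'' second-order Taylor expansion of $D\eta(F)[F]$ combined with a covariance-matching identity for $\IE[D^2\eta(F)[G,G]]$.

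Writing $F=\sum_{j=1}^m W_{\cdot j}\sigma(H_j)$ with $W_{\cdot j}\in\IR^n$ the $j$-th column of $W$, and $F^{(j)}:=F-W_{\cdot j}\sigma(H_j)$, a second-order expansion of $D\eta(F)$ around $F^{(j)}$ applied to the increment $W_{\cdot j}\sigma(H_j)=F-F^{(j)}$ gives
\[
D\eta(F)[F] = \sum_j D\eta(F^{(j)})[W_{\cdot j}\sigma(H_j)] + \sum_j D^2\eta(F^{(j)})[W_{\cdot j}\sigma(H_j), W_{\cdot j}\sigma(H_j)] + \sum_j R_j,
\]
where $|R_j|\leq \tfrac{1}{6}\norm{W_{\cdot j}}_2^3\norm{\sigma(H_j)}_\infty^3$ by the Lipschitzness of $D^2\eta$. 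In expectation, the first sum vanishes because the entries of $W_{\cdot j}$ are centered and independent of $(F^{(j)},H_j)$; using $\IE[W_{ij}W_{kj}]=\delta_{ik}c_w/m$, the second sum collapses to $\tfrac{c_w}{m}\sum_j\sum_i\IE[D^2\eta(F^{(j)})[\mathbf{e}_i\sigma(H_j), \mathbf{e}_i\sigma(H_j)]]$.

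For the Gaussian side, since the coordinates $G_i$ are independent centered Gaussians with covariance $c_w\IE[\sigma(H_1(x))\sigma(H_1(y))]$, the fact that the expectation of a continuous bilinear form against a centered Gaussian field depends only on its covariance (seen via a Mercer/Karhunen--Lo\`eve decomposition) yields
\[
\IE[D^2\eta(F)[G,G]] = c_w\sum_{i=1}^n \IE[D^2\eta(F)[\mathbf{e}_i\sigma(\widetilde H_1), \mathbf{e}_i\sigma(\widetilde H_1)]],
\]
where $\widetilde H_1$ is an independent copy of $H_1$ taken independent of $F$. By exchangeability across $j$ and the independence of $H_j$ from $F^{(j)}$, the discrepancy between this and the candidate term from the leave-one-out expansion reduces to $\tfrac{c_w}{m}\sum_{j,i}\IE[(D^2\eta(F)-D^2\eta(F^{(j)}))[\mathbf{e}_i\sigma(\widetilde H_1), \mathbf{e}_i\sigma(\widetilde H_1)]]$, which by Lipschitzness of $D^2\eta$ and independence is bounded termwise by $\tfrac{1}{3}\IE\norm{W_{\cdot j}}_2 \cdot \IE\norm{\sigma(H_1)}_\infty \cdot \IE\norm{\sigma(\widetilde H_1)}_\infty^2$.

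To finish, the fourth-moment assumption and the Cauchy--Schwarz inequality yield $\IE\norm{W_{\cdot j}}_2^3\leq n^{3/2}B^{3/4}(c_w/m)^{3/2}$ and $\IE\norm{W_{\cdot j}}_2\leq\sqrt{nc_w/m}$, while Jensen's inequality gives $\IE\norm{\sigma(H_1)}_\infty^k \leq \IE[\norm{\sigma(H_1)}_\infty^3]^{k/3}$ for $k=1,2$. Combined with $B\geq 1$ (from Jensen applied to $W_{ij}^2$), the Taylor-remainder and covariance-matching contributions sum to the stated bound. The one step requiring genuine infinite-dimensional care is the Mercer--Karhunen--Lo\`eve identity used to rewrite $\IE[D^2\eta(F)[G,G]]$ in terms of $\sigma(\widetilde H_1)$; the remainder of the argument parallels the leave-one-out Stein's method proof of the finite-dimensional multivariate CLT.
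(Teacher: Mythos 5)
Your proof is correct and follows essentially the same leave-one-out Stein's-method strategy as the paper: solve the Stein equation via Theorem~\ref{thm:stnsmoothbd}, Taylor-expand $D\eta(F)[F]$ around the leave-one-out fields $F^{(j)}$, match the resulting second-order term to $\IE[D^2\eta(F)[G,G]]$ via a Mercer/Karhunen--Lo\`eve argument, and control the Taylor remainder and matching discrepancy using the Lipschitz bound~\eqref{eq:etad2is}. The only cosmetic difference is in the covariance-matching step: the paper first replaces $G$ by the centered field $\widehat W\sigma(\widehat H)$ and then exploits the centered independent entries of $\widehat W$ to collapse to a diagonal sum over $j$, whereas you go directly to $c_w\sum_i\IE[D^2\eta(F)[\mathbf{e}_i\sigma(\widetilde H_1),\mathbf{e}_i\sigma(\widetilde H_1)]]$ by first dropping cross terms using independence of the $G_i$ and then invoking moment-matching one coordinate at a time (which is valid even though $\sigma(\widetilde H_1)$ is not centered, since $\IE[A[\mathbf{e}_iY,\mathbf{e}_iY]]$ depends only on the second-moment function $\IE[Y(x)Y(y)]$); both routes give the same constants up to harmless overcounting.
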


\begin{proof}
We apply Theorem~\ref{thm:stnsmoothbd} with the Gaussian random field $G$ and $d=n$.  In particular, we obtain the bound \eqref{eq:mainbd.llem:stnmethit} by substituting $F$ for $f$ in \eqref{eq:process.Stein.equation}
and bounding the expectation of its right-hand side. 
Our first step is to derive a more useful representation for the second order term. We claim
\ben{ \label{eq:subD^2eta.for.A}
\IE\bcls{ D^2 \eta(f)[G, G]}= \IE\bcls{ D^2 \eta(f)[W\sigma(H),W\sigma(H)]}.
}
More generally, if the covariance of a 
centered Gaussian random field~$G\in\rC(\cM;\IR^d)$ satisfies
$C_{ij}(x,y)=\delta_{ij}\IE[G_i(x) G_i(y)]= \IE[R_i(x)R_j(y)]$ for  some centered $L^2(\cM;\IR^d)$ random field $R$ (not necessarily assumed Gaussian), then for any bilinear form $A$ with $\IE\bcls{A[G,G]}<\infty$, we have
$\IE\bcls{A[G,G]}=\IE\bcls{A[R,R]}$.

Equality~\eqref{eq:subD^2eta.for.A} is a consequence of the Karhunen-Loeve expansion; see e.g., \cite[Chapter~3]{adler2007random}, that states that there is an orthonormal basis $(\phi_k)_{k\geq1}$ of 
$L^2(\cM; \IR)$ and independent one dimensional centered Gaussian random variables  $(X_{ki})_{k\geq1, 1\leq i \leq d}$  with $\Var(X_{ki})=\lambda_{ki}>0$ such that 
$G_i = \sum_{k\geq1} X_{ki} \phi_k$, and the convergence is in~$L^2$. Since $R$ is also~$L^2$, we can expand $R_i= \sum_{k\geq1} Y_{ki} \phi_k$ with $Y_{ki} =\int R_i(x) \phi_k(x) \mathrm{d}x$, where $\mathrm{d}x$ is the volume measure associated to $\cM$, and the convergence is in~$L^2$.
Now, by linearity and that $\Cov(X_{ki}, X_{\ell j})=\delta_{ij} \delta_{k \ell} \lambda_{k i}$, we find 
\begin{align*}
\IE\bcls{A[G,G]}=\sum_{i=1}^{d}\sum_{k\geq 1} \lambda_{k i} A[\mathbf{e}_i\phi_{k i}, \mathbf{e}_i\phi_{k i}],
\end{align*} 
where $\mathbf{e}_i$ is the $d$-dimensional vector with a one in the $i^{\rm th}$ position, and zero elsewhere.

To show that we obtain the same quantity with $R$ replacing $G$, it is enough to show $\Cov(Y_{k i},Y_{\ell j})=\delta_{ij}\delta_{k\ell} \lambda_{k i}$. We use Mercer's theorem, which says that 
\be{
\IE[R_i(x)R_j(y)]=C_{ij}(x, y)=\delta_{ij} \sum_{m\geq1} \lambda_{m i} \phi_m(x) \phi_m(y),
}
where the convergence in the sum is uniform, and we obtain
\ba{
\IE[Y_{k i} Y_{\ell j}] &= \IE\bbbcls{\iint   R_i(x) R_j(y) \phi_k(x) \phi_\ell(y)\,\mathrm{d}x\,\mathrm{d}y} \\
	& = \iint  C_{ij}(x, y) \phi_k(x) \phi_\ell(y)\, \mathrm{d}x\,\mathrm{d}y =\delta_{ij} \sum_{m\geq1} \lambda_{m i} \iint    \phi_m(x) \phi_m(y) \phi_k(x)\phi_\ell(y)\, \mathrm{d}x\,\mathrm{d}y\\
	& = \delta_{ij}\sum_{m \geq 1}\lambda_{mi}\delta_{mk}\delta_{ml}
	=\delta_{ij}\delta_{k\ell} \lambda_{k i},
}
as $\phi_k, k \geq 1$ are orthonormal, thus proving claim \eqref{eq:subD^2eta.for.A}.

Let the pair $(\widehat W, \widehat H)$ be an independent copy of $(W, H)$. Clearly, the right-hand side of \eqref{eq:subD^2eta.for.A} is the same for both pairs and hence 
\ben{\label{eq:twobd}
|\IE \zeta(F) - \IE \zeta(G) |=\babs{\IE \bcls{D^2\eta(W\sigma (H) ) [\widehat W\sigma(\widehat H),\widehat W\sigma(\widehat H)] - D\eta(W \sigma(H)) [W\sigma(H)]}},
}
via \eqref{eq:process.Stein.equation} and independence. Hence, bounding the right-hand side of \eqref{eq:twobd} yields a bound on the left-hand side of \eqref{eq:process.Stein.equation}.

\ignore{By Theorem~\ref{thm:stnsmoothbd} and the fact that $\zeta\in\cF$, we have the key inequality 
\ben{\label{eq:etad2lip-delete.this.reference.in.previous.theorem}
\norm{D^2 \eta(f)-D^2 \eta(h)} \leq \tfrac{1}{3} \norm{f-h}_\infty.
}
} % end ignore
We first write
\be{
\widehat W\sigma(\widehat H) =\sum_{j=1}^{m}
\widehat{V}_j
\quad \mbox{where we set} \quad
\widehat{V}_j\coloneqq\sum_{i=1}^{n}\widehat W_{ij}\sigma(\widehat H_j)\mathbf{e}_i,
}
and adopt parallel notation to define $V_j$. Because $\widehat W_{ij}$ are independent of each other and of $W$, centered, and assumed to have common variance $c_w/m$, for the first term in \eq{eq:twobd} we have
\begin{align}\label{eq:covrep}
\IE \bcls{D^2\eta(W \sigma(H)) &[\widehat W\sigma(\widehat H),\widehat W \sigma(\widehat H)]}  = \sum_{j=1}^{m}  \IE\bbbclc{ D^2\eta(W \sigma(H)) [\widehat{V}_j,\widehat{V}_j]}.
\end{align}

%\ban{
%\IE \bcls{D^2\eta(W \sigma(H)) &[\widehat W\sigma(\widehat H),\widehat W \sigma(\widehat H)]}  \notag\\
	%&= \frac{c_w}{m} \sum_{i=1}^{n}\sum_{j=1}^{m} \IE\bcls{ D^2\eta(W \sigma(H)) [\sigma(\widehat H_j) \mathbf{e}_i,\sigma(\widehat H_j) \mathbf{e}_i]} \notag\\
	%&= \sum_{j=1}^{m}  \IE\bbbclc{ D^2\eta(W \sigma(H)) [\widehat{V}_j,\widehat{V}_j]}.
 %\bbbcls{\sum_{i=1}^{n} \widehat W_{ij} \sigma(\widehat H_j) \mathbf{e}_i,\sum_{i=1}^{n} \widehat W_{ij} \sigma(\widehat H_j) \mathbf{e}_i}}
	%\label{eq:covrep}
%}
Working now on the second term of \eq{eq:twobd},
\ben{ \label{eq:loo}
\bclr{W\sigma(H)}^{j}:=W\sigma(H)-V_j \quad \mbox{where} \quad V_j=\sum_{i=1}^{n}W_{ij}\sigma(H_j) \mathbf{e}_i,
}
and which is independent of $(W_{ij})_{i=1}^{n}$ and $H_j$. Using that independence to subtract a term with expectation zero in the second line below, followed by an application of a Taylor type argument, we have
\ban{
&\IE\bcls{D\eta(W\sigma(H)) [W \sigma(H)]} \notag \\
&= \sum_{j=1}^{m} \IE\bbbclc{D\eta(W \sigma(H))
 [V_j]-D\eta\bclr{(W \sigma(H))^{j}} [V_j]
 }
 \notag \\
	&=\sum_{j=1}^{m} \IE\bbbclc{ D^2\eta\bclr{(W \sigma(H))^{j}}[V_j, V_j]} 
 \notag \\ 
 &\quad+
 \sum_{j=1}^m\int_0^1 
 \IE\bbbclc{ D^2\eta\bclr{s W \sigma(H)+(1-s)(W \sigma(H))^{j}}
 [V_j, V_j]
 %\notag \\
  %  &\qquad\qquad\qquad\qquad 
    -D^2\eta\bclr{(W \sigma(H))^{j}}[V_j, V_j]
    }
    \mathrm{Leb}(\mathrm{d}s) \notag \\
	&=  \sum_{j=1}^{m} \IE\bbbclc{ D^2\eta\bclr{(W \sigma(H))^{j}}[\widehat{V}_j,\widehat{V}_j]}  \label{eq:2ordap} \\
 \begin{split}
	 &\quad+\sum_{j=1}^m\int_0^1 \IE\bbbclc{ D^2\eta\bclr{s W \sigma(H)+(1-s)(W \sigma(H))^{j}}[V_j,V_j] %\\
    %&\qquad\qquad\qquad\qquad 
    -D^2\eta\bclr{(W \sigma(H))^{j}}[V_j,V_j]
    } \mathrm{Leb}(\mathrm{d}s).
 \end{split}\label{eq:3ordap}
 }

\ignore{
\ban{
&\IE\bcls{D\eta(W\sigma(H)) [W \sigma(H)]} \notag \\
	&= \sum_{j=1}^{m} \IE\bbbclc{D\eta(W \sigma(H))\bbbcls{\sum_{i=1}^{n}W_{ij}\sigma(H_j) \mathbf{e}_i}-D\eta\bclr{(W \sigma(H))^{j}}\bbbcls{\sum_{i=1}^{n}W_{ij}\sigma(H_j) \mathbf{e}_i}} \notag \\
	&=\sum_{j=1}^{m} \IE\bbbclc{ D^2\eta\bclr{(W \sigma(H))^{j}}\bbbcls{\sum_{i=1}^{n}W_{ij}\sigma(H_j) \mathbf{e}_i, \sum_{i=1}^{n}W_{ij}\sigma(H_j) \mathbf{e}_i}} \notag \\
 &\quad+\sum_{j=1}^m\int_0^1 \IE\bbbclc{ D^2\eta\bclr{s W \sigma(H)+(1-s)(W \sigma(H))^{j}}\bbbcls{\sum_{i=1}^{n}W_{ij}\sigma(H_j) \mathbf{e}_i, \sum_{i=1}^{n}W_{ij}\sigma(H_j) \mathbf{e}_i} \notag \\
    &\qquad\qquad\qquad\qquad -D^2\eta\bclr{(W \sigma(H)){j}}\bbbcls{\sum_{i=1}^{n}W_{ij}\sigma(H_j) \mathbf{e}_i, \sum_{i=1}^{n}W_{ij}\sigma(H_j) \mathbf{e}_i}} \mathrm{Leb}(\mathrm{d}s) \notag \\
	&= \sum_{j=1}^{m} \IE\bbbclc{ D^2\eta\bclr{(W \sigma(H))^{j}}\bbbcls{\sum_{i=1}^{n}\widehat W_{ij}\sigma(\widehat H_j) \mathbf{e}_i, \sum_{i=1}^{n}\widehat W_{ij}\sigma(\widehat H_j) \mathbf{e}_i}}  \label{eq:2ordap} \\
 \begin{split}
	 &\quad+\sum_{j=1}^m\int_0^1 \IE\bbbclc{ D^2\eta\bclr{s W \sigma(H)+(1-s)(W \sigma(H))^{j}}\bbbcls{\sum_{i=1}^{n}W_{ij}\sigma(H_j) \mathbf{e}_i, \sum_{i=1}^{n}W_{ij}\sigma(H_j) \mathbf{e}_i} \\
    &\qquad\qquad\qquad\qquad -D^2\eta\bclr{(W \sigma(H))^{j}}\bbbcls{\sum_{i=1}^{n}W_{ij}\sigma(H_j) \mathbf{e}_i, \sum_{i=1}^{n}W_{ij}\sigma(H_j) \mathbf{e}_i}} \mathrm{Leb}(\mathrm{d}s).
 \end{split}\label{eq:3ordap}
 }
} % end ignore
To bound~\eq{eq:twobd}, we first subtract this expression from~\eq{eq:covrep} and, then  bound the absolute value.
In particular, we first bound the absolute difference between~\eq{eq:covrep} and~\eq{eq:2ordap}, and then the
absolute value of~\eq{eq:3ordap}. For the former, applying the second inequality of~\eq{eq:etad2lip}, which gives that the second derivative of $\eta$ is Lipschitz, followed by H\"older's inequality, yields that this difference is bounded by
%\ban{
\begin{multline}
\sum_{j=1}^{m}\bbbabs{\IE\bbbclc{ D^2\eta\bclr{(W \sigma(H))^{j}}[\widehat{V}_j,\widehat{V}_j]
- \IE
D^2\eta\bclr{W \sigma(H)}[\widehat{V}_j,\widehat{V}_j]}} \\
	\leq \frac{1}{3}\sum_{j=1}^{m} \IE\bbbcls{
 \bbnorm{V_j
 }_\infty \bbnorm{\widehat{V}_j
 }_\infty^2
 } 
 \leq  
 \frac{1}{3} \sum_{j=1}^m \IE \bbbcls{
 \bbnorm{V_j
 }_\infty^3
 }.
 \label{eq:fh}
\end{multline}
%}
Similarly, but more simply, the absolute value of~\eq{eq:3ordap} is bounded by one-half this same quantity.

\ignore{
\ban{
\sum_{j=1}^{m}&\bbbabs{ \IE\bbbclc{ D^2\eta\bclr{(W \sigma(H))^{j}}\bbbcls{\sum_{i=1}^{n}\widehat W_{ij}\sigma(\widehat H_j) \mathbf{e}_i, \sum_{i=1}^{n}\widehat W_{ij}\sigma(\widehat H_j) \mathbf{e}_i}} \notag \\
	&\hspace{3cm}-\sum_{j=1}^{m} \IE\bbbclc{ D^2\eta\bclr{W \sigma(H)}\bbbcls{\sum_{i=1}^{n}\widehat W_{ij}\sigma(\widehat H_j) \mathbf{e}_i, \sum_{i=1}^{n}\widehat W_{ij}\sigma(\widehat H_j) \mathbf{e}_i}}}\notag\\
	&\leq \frac{1}{3}\sum_{j=1}^{m} \IE\bbbcls{
 \sum_{i=1}^{n}\widehat W_{ij}\sigma(\widehat H_j) \mathbf{e}_i
 }_\infty^2 \bbnorm{\sum_{i=1}^{n} W_{ij}\sigma(H_j) \mathbf{e}_i}_\infty
 }. \label{eq:fh}
  } % end ignore

% To bound this further, note that for a bounded trilinear form $A$, we have
% \ba{
% \babs{A[x,x,y]} & = \tfrac{1}{6}\babs{A[x+y,x+y,x+y] - A[x-y,x-y,x-y] - 2 A[y,y,y]}  \\
% 	&\leq\frac{ \norm{A}}{6} \bclr{\norm{x+y}^3 + \norm{x-y}^3 + 2\norm{y}^3} \\
% 	&\leq \frac{\norm{A}}{6} \bclr{2\norm{x}^3 + 6 \norm{x}^2 \norm{y} + 6\norm{x}\norm{y}^2 +4 \norm{y}^3}.
% }
 
To bound~\eq{eq:fh}, we use the fact \nr{that $H_j$ is independent from} $W_{ij}, i=1,\ldots,n$, and again apply H\"older's inequality, to find that
\ba{
\frac{1}{3}\sum_{j=1}^{m} \IE\bbbcls{\bbnorm{\sum_{i=1}^{n} W_{ij}\sigma(H_j) \mathbf{e}_i}_\infty^3}
	& \leq \frac{1}{3} \sum_{j=1}^{m}\IE\bcls{\norm{\sigma(H_j)}_\infty^3}\IE \bbbcls{\bbnorm{\sum_{i=1}^{n} W_{ij}\mathbf{e}_i}^4}^{3/4} \\
	& = \frac{1}{3}\sum_{j=1}^{m}\IE\bcls{\norm{\sigma(H_j)}_\infty^3}\IE \bbbcls{\bbclr{\sum_{i=1}^{n} W_{ij}^2}^2}^{3/4} \\
	&\leq \frac{m}{3}\IE\bcls{\norm{\sigma(H_1)}_\infty^3} \bbbclr{\frac{n^2 B c_w^2}{m^2}}^{3/4} \\
	& =\frac{1}{3} c_w^{3/2} B^{3/4} \IE\bcls{\norm{\sigma(H_1)}_\infty^3} \frac{n^{3/2}}{\sqrt{m}},
	}
where we have used that 
 $\IE\bcls{W_{ij}^4}\leq B (c_w/m)^{2}$. Hence, we obtain the desired inequality,~\eqref{eq:mainbd.llem:stnmethit}. 
%\ba{
%\babs{\IE[\zeta& (F)] - \IE[\zeta (G)]}
% 	\leq    c_w^{3/2} B^{3/4} \IE\bcls{\norm{\sigma(H_1)}_\infty^3} \frac{n^{3/2}}{\sqrt{m}}.  %\qedhere
%}
%Thus, using that $1\leq B$(implied by Cauchy-Schwarz), 
\end{proof}

In Section~\ref{sec:smoothmetric}, Lemma~\ref{lem:stnmethit} is used to derive bounds on the difference between $G\sp\ell$ and $F\sp\ell$ in the smooth function metric for general $(\cM,\dist)$. For informative bounds in the Wasserstein metric when $\cM\equiv \cS^n$, we apply Theorem~\ref{thm:strongermetric}. The following lemma gives some moment bounds that are used along with Lemma~\ref{lem:momepsreg} to bound the terms $\IE\norm{F-F_\eps}_\infty$ and $\IE\norm{G-G_\eps}_\infty$ appearing in~\eq{eq:masterresult}.

\begin{lemma}\label{lem:mcmombd}
For fixed $p\in \IN$, assume  $H \in L^{2p}(\cM;\IR^m)$ is a
random field with identically distributed coordinate processes,  and let
 $W:\IR^{m}\to \IR$ be an
$1\times m$ random matrix that is independent of $H$ and has centered independent entries satisfying 
$\IE[W_{1j}^{2p}]\leq \tilde c /m^p$,
for some $\tilde c>0$.
Letting $\sigma:\IR\to\IR$  be Lipschitz with constant $\Lip_\sigma$, 
define $F:\cM\to \IR$ by
\be{
F(x) = W \sigma\bclr{H(x)},
}
and finally, letting $A_{m}\sp{2p}$ be the set of $(j_1,\ldots,j_{2p})\in \{1,\ldots, m\}^{2p}$ where the label of every coordinate appears at least twice, 
there is a constant $c$ depending only on  $p$ and $\tilde c$ such that
\ban{
\IE\bcls{(F(x) - F(y))^{2p}}&
\leq 
 \frac{\tilde c}{m^p} \sum_{(j_1,\ldots,j_{2p})\in A_{m}\sp{2p}}\prod_{\ell=1}^{2p} \IE\bbbcls{\bbclr{\sigma(H_{j_\ell}(x))-\sigma\clr{H_{j_\ell}(y)}}^{2p}}^{1/(2p)} \label{eq:intlipmombd}\\
 &\leq c \, \Lip_\sigma^{2p} \IE \bcls{\bclr{H_1(x)-H_1(y)}^{2p}}.\label{eq:lipmombd}
}

\end{lemma}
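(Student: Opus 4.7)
The plan is to expand $(F(x)-F(y))^{2p}$ as a multinomial sum and exploit the independence and centering of the $W_{1j}$'s to restrict to the index set $A_m^{(2p)}$.  Writing
\be{
F(x)-F(y)=\sum_{j=1}^{m}W_{1j}\Delta_j, \qquad \Delta_j:=\sigma(H_j(x))-\sigma(H_j(y)),
}
I would expand
\be{
\mathbb{E}\bcls{(F(x)-F(y))^{2p}}
=\sum_{(j_1,\ldots,j_{2p})\in\{1,\ldots,m\}^{2p}}
\mathbb{E}\bbcls{\prod_{\ell=1}^{2p}W_{1j_\ell}}\,
\mathbb{E}\bbcls{\prod_{\ell=1}^{2p}\Delta_{j_\ell}},
}
where the factorization uses the independence of $W$ and $H$.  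Because the entries $W_{1j}$ are independent and centered, any multi-index in which some coordinate label appears exactly once forces the $W$-expectation to vanish; the surviving tuples are exactly those in $A_m^{(2p)}$.

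For each surviving multi-index, group the indices by distinct label: if labels $i_1,\ldots,i_k$ appear with multiplicities $m_1,\ldots,m_k\geq 2$ with $\sum_{s}m_s=2p$, then by independence and Lyapunov's (or Jensen's) inequality,
\be{
\bbabs{\mathbb{E}\bbcls{\prod_{\ell=1}^{2p}W_{1j_\ell}}}
=\prod_{s=1}^{k}\bbabs{\mathbb{E}\bcls{W_{1i_s}^{m_s}}}
\leq\prod_{s=1}^{k}\mathbb{E}\bcls{W_{1i_s}^{2p}}^{m_s/(2p)}
\leq\frac{\tilde c}{m^{p}},
}
since $\sum_s m_s/(2p)=1$.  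For the $\Delta$-factor, the generalized Hölder inequality gives
\be{
\bbabs{\mathbb{E}\bbcls{\prod_{\ell=1}^{2p}\Delta_{j_\ell}}}
\leq\prod_{\ell=1}^{2p}\mathbb{E}\bcls{\Delta_{j_\ell}^{2p}}^{1/(2p)}.
}
Combining yields \eqref{eq:intlipmombd} directly.

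For \eqref{eq:lipmombd}, I would use that $\sigma$ is $\Lip_\sigma$-Lipschitz and that the coordinates $H_j$ are identically distributed, so each factor $\mathbb{E}[\Delta_{j_\ell}^{2p}]^{1/(2p)}\leq \Lip_\sigma\,\mathbb{E}[(H_1(x)-H_1(y))^{2p}]^{1/(2p)}$; taking the product of $2p$ factors produces $\Lip_\sigma^{2p}\,\mathbb{E}[(H_1(x)-H_1(y))^{2p}]$, uniformly in the multi-index.  It remains to show that $|A_m^{(2p)}|\leq c_p\,m^{p}$ for a constant $c_p$ depending only on $p$.  This is the one combinatorial step I expect to be the main (though still elementary) obstacle: any $(j_1,\ldots,j_{2p})\in A_m^{(2p)}$ induces a set partition of $\{1,\ldots,2p\}$ into $k\leq p$ blocks of size at least $2$, and then an injection of these $k$ blocks into $\{1,\ldots,m\}$.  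The total count is therefore $\sum_{k=1}^{p}S_{\geq 2}(2p,k)\,m(m-1)\cdots(m-k+1)\leq m^{p}\sum_{k=1}^{p}S_{\geq 2}(2p,k)$, where $S_{\geq 2}(2p,k)$ is the number of such partitions, a constant depending only on $p$.  Multiplying by the $\tilde c/m^{p}$ prefactor, the $m^{p}$ factors cancel and \eqref{eq:lipmombd} follows with $c=\tilde c\sum_{k=1}^{p}S_{\geq 2}(2p,k)$.
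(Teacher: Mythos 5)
Your proposal is correct and follows essentially the same route as the paper: expand the multinomial sum, use independence and centering of the $W_{1j}$ to restrict to $A_m^{(2p)}$, then Hölder/Jensen to bound the $W$-factor by $\tilde c/m^p$ and the $\Delta$-factor by the product of $L^{2p}$-norms, and finally combine identical distribution, Lipschitz continuity, and $|A_m^{(2p)}|=\bigo(m^p)$ for the second inequality. You simply spell out two details the paper asserts without elaboration, namely the Lyapunov step for the $W$-moment product and the explicit combinatorial count of $A_m^{(2p)}$ via set partitions into blocks of size at least two.
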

\begin{proof}
For the first inequality,  direct calculation gives
\bes{
\IE\bcls{\clr{F(x)-F(y)}^{2p}}
	&= \sum_{j_1,\ldots, j_{2p}=1}^{n_1} \IE\bbbcls{ \prod_{\ell=1}^{2p} W_{1,j_\ell}}\IE\bbbcls{\prod_{\ell=1}^{2p} \bbclr{\sigma(H_{j_\ell}(x))-\sigma\clr{H_{j_\ell}(y)}}} \\
	&= \sum_{(j_1,\ldots,j_{2p})\in A_{m}\sp{2p}} \IE\bbbcls{ \prod_{\ell=1}^{2p} W_{1,j_\ell} }\IE\bbbcls{\prod_{\ell=1}^{2p} \bbclr{\sigma(H_{j_\ell}(x))-\sigma\clr{H_{j_\ell}(y)}}},
}
which follows since $W_{1j}$ are independent and have mean zero. From this~\eq{eq:intlipmombd} easily follows by H\"older's inequality.

As $H$ has identically distributed entries, we see that~\eq{eq:intlipmombd} satisfies, 
 \bes{
 \frac{\tilde c}{m^p} \sum_{(j_1,\ldots,j_{2p})\in A_{m}\sp{2p}}\prod_{\ell=1}^{2p}& \IE\bbbcls{\bbclr{\sigma(H_{j_\ell}(x))-\sigma\clr{H_{j_\ell}(y)}}^{2p}}^{1/(2p)} \\
	&=\frac{\tilde c}{m^p} \, \babs{A_{m}\sp{2p}}  \IE\bbbcls{\bbclr{\sigma(H_{1}(x))-\sigma\clr{H_{1}(y)}}^{2p}}, \\
	&\leq  c \,  \IE\bbbcls{\bbclr{\sigma(H_1(x))-\sigma\clr{H_{1}(y)}}^{2p}},
}
where the last inequality follows because $\babs{A_{m}\sp{2p}}=\bigo(m^p)$, with a constant depending only on $p$. The upper bound~\eq{eq:lipmombd} now easily follows, since $\sigma$ is Lipschitz.
\end{proof}

\subsection{$W_1$ bounds for wide random neural networks: Proof of Theorem~\ref{thm:w1bound}}%\label{sec:w1bounds}

Combining the previous results, we can now prove our main theorem for wide random neural networks.
 \begin{proof}[\textbf{Proof of Theorem~\ref{thm:w1bound}}] 
 The proof proceeds by induction  on $\ell=2,\ldots, L$ for the hypotheses that there is a constant $c$ (which may change from line to line) depending only on $(c_w\sp{m},c_b\sp{m},B\sp{m})_{{m}=0}^L$, $n, p$ and $\sigma(0)$  such that
\besn{\label{eq:indhypdw}
&\mathsf{d}_\cW\bclr{F\sp{\ell},G\sp{\ell}}  \\
	&\leq c\nr{(1+ \Lip_\sigma^3)^{\ell-1}} \sum_{m=1}^{\ell-1}\bbbclr{n_{m+1}^{1/2}\bbbclr{\frac{n_{m+1}^4}{n_m}}^{(1-\frac{n}{p})/({8}(1-\frac{n}{p})+{6}(n+\iota))}\log(n_{m}/n_{m+1}^4)} \prod_{j=m+1}^{\ell-1} \IE \norm{W\sp j}_{\mathrm{op}}, 
}
and
\besn{\label{eq:fgmomind}
%\IE\bcls{\clr{F\spL(x)-F\spL(y)}^{2}} & \leq   c\, \dist(x,y)^{2}, \\
\IE\bcls{\clr{G_i\sp{\ell}(x)-G_i\sp{\ell}(y)}^{2p}} & \leq   c\, \nr{ \Lip_\sigma^{2p(\ell-1)}} \dist(x,y)^{2p}, \,\,\, i=1,\ldots, n_\ell,
}
and finally
\ben{\label{eq:siggellmombd}
\IE\bcls{\norm{\sigma(G_i\sp{\ell}- b_i\sp{\ell})}_\infty^3} \leq c\nr{\,(1 + \Lip_\sigma)^{3\ell}}, \,\,\, i=1,\ldots, n_\ell.
}

We first note that the bias $b^{(\ell)}$ plays no role in the bound and can be set to zero. The reduction is obvious for~\eqref{eq:fgmomind} and~\eqref{eq:siggellmombd}, since we can write $G^{(\ell)}=\widetilde G^{(\ell)} + b^{(\ell)}$, with $\widetilde{G}^{(\ell)}$ a Gaussian process independent of $b^{(\ell)}$, having covariance $\widetilde{C}^{(\ell)}(x,y)= C^{(\ell)}(x,y) - \rI_{n_2} c^{(\ell)}$.
 To see why we can also make this simplification for~\eqref{eq:indhypdw}, assume that this inequality holds for $F^{(\ell)}$ and $G^{(\ell)}$ when the biases are zero. Define ${\widetilde F}^{(\ell)}=F^{(\ell)}+b^{(\ell)}$ and ${\widetilde G}^{(\ell)}=G^{(\ell)}+b^{(\ell)}$, where the summands are independent. For any Lipschitz $\zeta:\rC(\cS^n;\IR^{n_2}) \to \IR$ we have, by independence, that 
 \begin{align*}
\babs{\IE\cls{\zeta(\widetilde{F}\sp \ell)}-\IE\cls{\zeta(\widetilde{G}\sp \ell)}}=
\babs{\IE\cls{\zeta(F^{(\ell)}+b^{(\ell)})-\zeta(G^{(\ell)}+b^{(\ell)})}}
=
\babs{\IE\cls{\widetilde \zeta(F\sp \ell)-\widetilde\zeta(G\sp \ell)}}
\end{align*}
where
\be{
\widetilde\zeta(f) = \IE\cls{\zeta(f+b\sp2)}, 
}
which is $1$-Lipschitz, since
\be{
\babs{\widetilde\zeta(f)-\widetilde\zeta(g)}= \babs{\IE\bcls{\zeta(f+ b\sp2)-\zeta(g+b\sp2)}} \leq \norm{f-g}_\infty.
}
Hence Wasserstein bounds in the case where the biases are non-zero are upper bounded by those in the zero bias case. 
Note that eliminating the biases $b\sp\ell$ in this manner requires them to be Gaussian, as otherwise the process $G\sp\ell$ may not be Gaussian.

\ignore{
 we want to bound
\be{%\label{eq:dwfg}
\babs{\IE\cls{\zeta(F\sp 2)}-\IE\cls{\zeta(G\sp 2)}},
}
where $\zeta:\rC(\cS^n;\IR^{n_2}) \to \IR$ is  such that $\abs{\zeta(f)-\zeta(g)}\leq \norm{f-g}_\infty$.
 Similar to $G\sp2$, we can decompose $F\sp{2}=\widetilde F\sp{2} + b\sp{2}$, where the summands are independent.
We can then define
\be{
\tilde\zeta(f) = \IE\cls{\zeta(f+b\sp2)},
}
which is $1$-Lipschitz, inherited from $\zeta$, since
\be{
\babs{\tilde\zeta(f)-\tilde\zeta(g)}= \babs{\IE\bcls{\zeta(f+ b\sp2)-\zeta(g+b\sp2)}} \leq \norm{f-g}_\infty.
}
Moreover, by independence, we have
\be{%\label{eq:bellzero}
\IE\cls{\zeta(F\sp2)} - \IE\cls{\zeta(G\sp2)} =\IE\cls{\tilde \zeta(\widetilde F\sp2)} - \IE\cls{\tilde \zeta(\widetilde G\sp2)}.
}
Thus it  is enough to bound the right hand for $1$-Lipschitz  $\tilde \zeta $, which is the same as assuming
$c_b\sp2=0$ in the original formulation, which we now assume. 
}

We now begin the proof of the base case, $\ell=2$.  We first show~\eqref{eq:fgmomind}, as well as some other related moment bounds used to show~\eqref{eq:indhypdw}. We start by applying~\eqref{eq:lipmombd} from Lemma~\ref{lem:mcmombd} with $W=W\sp1_{1, \cdot}, H=F\sp1$ and $m=n_1$, to find
\bes{
\IE\bbcls{\bclr{F\sp2_1(x)-F_1\sp2(y)}^{2p}}
    &\leq c \, \nr{\Lip_\sigma^{2p} }\IE\bbcls{\bclr{F\sp1_1(x)-F\sp1_1(y)}^{2p}}.
}
Applying~\eqref{eq:intlipmombd} from Lemma~\ref{lem:mcmombd} with $W=W\sp0_{1,\cdot}, H(x)=x$ and $m=n_0$, and \nr{$\sigma$ there equal to the identity}, we obtain
\ba{
 \IE\bbcls{\bclr{F\sp1_1(x)-F\sp1_1(y)}^{2p}} 
    \leq &~c \sum_{(j_1,\ldots,j_{2p})\in A_{n_0}\sp{2p}} \prod_{\ell=1}^{2p} \abs{x_{j_\ell}-y_{j_\ell}}	\leq  c \sum_{j_1,\ldots,j_{2p}=1}^{n_0}\prod_{\ell=1}^{2p} \abs{x_{j_\ell}- y_{j_\ell}} \\
	=&~c  \bbbclr{\sum_{j=1}^{n_0} \abs{x_{j}- y_{j}}}^{2p}	\leq  c  \norm{x-y}_2^{2p}  \leq c \, \dist(x,y)^{2p},
}
where the last inequality holds as $\norm{x-y}_2\leq \dist(x,y)$. Thus, we have shown
\ben{\label{eq:F2mcmombd}
\IE\bbcls{\bclr{F\sp2_1(x)-F_1\sp2(y)}^{2p}}\leq c \, \nr{\Lip_\sigma^{2p} }\dist(x,y)^{2p}.
}
Letting the variance of $F_1^{(2)}(x)-F_2^{(2)}(y)$ be denoted $\tau^2$, as the first and second moments of $G_1^{(2)}$ match those of $F_1^{(2)}$, we have in particular that $G_1^{(2)}(x)-G_2^{(2)}(y) \sim \mathcal{N}(0,\tau^2)$, and with $c_p=(2p-1) \times (2p-3) \ldots \times 3 \times 1$, using Jensen's inequality and \eqref{eq:F2mcmombd} we obtain
\begin{multline}\label{eq:G2mcmombd}
E\left[ (G_1^{(2)}(x)-G_2^{(2)}(y))^{2p}\right] = c_p \tau^{2p} \\=c_p\left(E[(F_1^{(2)}(x)-F_2^{(2)}(y))^2] \right)^p
\leq c_pE[(F_1^{(2)}(x)-F_2^{(2)}(y))^{2p}] \leq c \, \nr{\Lip_\sigma^{2p} }
 \dist(x,y)^{2p}.
\end{multline}
%and by Gaussianity and that the covariance of $F\sp2$ and $G\sp2$ are the same, we also have
%\ben{\label{eq:G2mcmombd}
%\IE\bbcls{\bclr{G\sp2_1(x)-G_1\sp2(y)}^{2p}}\leq c \, \dist(x,y)^{2p}.
%}
The same inequalities holds for all indices $i=1,\ldots, n_2$ since the coordinates all have the same distribution. Thus, we have established~\eqref{eq:fgmomind} for $\ell=2$. 

Now turning to~\eqref{eq:indhypdw}, 
 we bound
$\norm{F\sp2-F\sp2_\eps}_\infty, \norm{G\sp2-G\sp2_\eps}_\infty$, and $\mathsf{d}_{\cF}(F\sp2,G\sp2)$
and then invoke Theorem~\ref{thm:strongermetric}. 
Using \eqref{eq:F2mcmombd} and~\eqref{eq:G2mcmombd} in Lemma~\ref{lem:momepsreg} \nr{applied to $F\sp2/\Lip_\sigma$ and $G\sp2/\Lip_\sigma$} implies that 
\ben{\label{eq:f2g2epsbd}
\max\bclc{\IE\norm{F\sp2-F\sp2_\eps}_\infty, \IE\norm{G\sp2-G\sp2_\eps}_\infty }
\leq c \,\nr{\Lip_\sigma} \sqrt{n_2} \, \eps^{\frac{1}{2}\clr{1-\frac{n}{p}}} \sqrt{\log\clr{1/\eps}}.
}
The right-hand side of inequality~\eq{eq:regzetderiv} of Theorem~\ref{thm:regzet} with $k=2$ and $d=n_2$ gives an upper bound on the amount by which $\zeta_{\eps,\delta}$ needs to be scaled in order to satisfy the second derivative condition in 
\eqref{def:calF} and be an element of $\mathcal{F}$. Noting that $G\sp1$ is continuous with i.i.d.\ coordinate processes, we can apply 
Lemma~\ref{lem:stnmethit} with $F=F\sp2$, $H=G\sp1$, $n=n_2$ and $m=n_1$  to find
 \ben{\label{eq:f2g2smoothbd1}
 \babs{\IE\cls{\zeta_{\eps,\delta}(F\sp2)}-\IE\cls{\zeta_{\eps,\delta}(G\sp2)}}
  	\leq  c  \, \delta^{-2} \eps^{- 2 (n+\iota)}   \bclr{c_w\sp1}^{3/2} \bclr{B\sp1}^{3/4} \IE\bcls{\norm{\sigma(G_1\sp{1})}_\infty^3} \frac{n_{2}^{5/2}}{\sqrt{n_1}}.
	}
To bound $\IE\bcls{\norm{\sigma(G_1\sp{1})}_\infty^3}$, since $\sigma$ is Lipschitz, for a fixed $y\in \cS^n$ and any $x\in \cS^n$, we have
\bes{
\babs{\sigma(G_1\sp1(x))}
	&\leq \babs{\sigma(G_1\sp1(x))-\sigma(G_1\sp1(y))}+\abs{\sigma(G_1\sp1(y))-\sigma(0)} + \abs{\sigma(0)} \\
	&\leq \Lip_\sigma \bclr{\omega_{G_1\sp1}(\pi) + \abs{G_1\sp1(y)}} + \abs{\sigma(0)},
}
where $\omega_{G_1\sp1}(\theta)$ denotes the modulus of continuity of $G_1\sp1$ at level $\theta$; see Definition~\ref{defn:modulus}. Taking the supremum over $x$ implies
\besn{\label{eq:bdsupg1}
\bnorm{\sigma(G_1\sp{1})}_\infty \leq \nr{(\Lip_\sigma+1)}  \bclr{\omega_{G_1\sp1}(\pi) + \abs{G_1\sp1(y)} + \abs{\sigma(0)}}.
}
Because $G\sp1(y)=W\sp0 y$, it is easy to see that
\ben{\label{eq:3momnormbd}
\IE \bbcls{\bnorm{\sigma(G_1\sp{1})}_\infty^3} \leq \nr{(\Lip_\sigma+1)^3} .
}
Substituting this upper bound into~\eqref{eq:f2g2smoothbd1} and combining with~\eqref{eq:f2g2epsbd} in 
Theorem~\ref{thm:strongermetric} implies
\be{
\mathsf{d}_\cW(F\sp2, G\sp2)
    \leq c \nr{(\Lip_\sigma+1)^3}\sqrt{n_2}\bbclr{  \eps^{\frac{1}{2}\clr{1-\frac{n}{p}}} \sqrt{\log\clr{1/\eps}} + \delta +  \delta^{-2} \eps^{- 2(n+\iota)}    \frac{n_{2}^{2}}{\sqrt{n_1}}}.
}
Choosing 
\be{
\delta=\eps^{-\frac{2}{3}(n+\iota)}\bbbclr{\frac{ n_2^{4}}{n_1}}^{1/6} \,\, \mbox{ and } \,\,
\eps=\bbbclr{\frac{n_2^4}{n_1}}^{1/(3(1-\frac{n}{p})+4(n+\iota))} 
}
 we have shown that 
\be{
\mathsf{d}_\cW(F\sp2, G\sp2)
    \leq c \nr{(\Lip_\sigma+1)^3}\sqrt{n_2}\bbbclr{\frac{n_2^4}{n_1}}^{(1-\frac{n}{p})/(6(1-\frac{n}{p})+8(n+\iota))}\sqrt{\log(n_1/n_2^4)}.
}

For~\eqref{eq:siggellmombd}, in exactly the same way as~\eqref{eq:bdsupg1}, we have 
for any $y\in\cS^n$, 
\ben{\label{eq:g2sup3mom}
\babs{\sigma(G_1\sp2(x))}\leq \nr{(\Lip_\sigma+1)} \bclr{\omega_{G_1\sp2}(\pi) + \abs{G_1\sp2(y)} + \abs{\sigma(0)}}.
}
But~\eq{eq:fgmomind} and Proposition~\ref{prop:modcontbd} together  imply  \nr{(scaling by $\Lip_\sigma$)}
that
$\IE\cls{\omega_{G_1\sp2}(\pi) ^3}\leq c\nr{(\Lip_\sigma+1)^3}$. Because $G_1\sp2$ is Gaussian, we have that
\be{
\IE\bcls{\abs{G_1\sp2(y)}^3} = 2 \sqrt{2/\pi} \Var(G_1\sp2(y))^{3/2},
}
and, by definition and using~\eq{eq:3momnormbd}, 
\be{
\Var(G_1\sp2(y)) = c_w\sp{1} \IE\bcls{\sigma\bclr{G_1\sp{1}(y)}^2}+c_b\sp{1}\leq c \nr{(\Lip_\sigma+1)^2}.
}
Thus
\be{
\IE\bcls{\norm{\sigma(G_1\sp{2})}^3}\leq c \nr{(\Lip_\sigma+1)^6},
}
and the base case is established.

For the induction step, assume~\eqref{eq:indhypdw},~\eqref{eq:fgmomind}, and~\eqref{eq:siggellmombd} for some $\ell \geq 2$; we show these three conditions are satisfied when $\ell$ is replaced by $\ell+1$. For~\eqref{eq:fgmomind}, we have from the definition of the covariance $C\sp{\ell+1}$ of~$G\sp{\ell+1}$ that 
\bes{
\IE\bbcls{\bclr{G_1\sp{\ell+1}(x)-G_1\sp{\ell+1}(y)}^{2}}
     &= c \IE\bbbcls{\bbclr{\sigma\bclr{G_1\sp\ell(x)}-\sigma\bclr{G_1\sp\ell(y)}}^{2}} \\
    &\leq c \nr{\Lip_\sigma^2}\IE\bbcls{\bclr{G_1\sp\ell(x)-G_1\sp\ell(y)}^{2}}  \\
    &\leq c\, \nr{\Lip_\sigma^{2\ell}} \dist(x,y)^2,
}
where the first inequality uses that $\sigma$ is Lipschitz, and the second step the induction hypothesis. As $G^{(\ell+1)}$ is Gaussian, we now also have that
\ben{\label{eq:gellp1mombd}
\IE\bbcls{\bclr{G_1\sp{\ell+1}(x)-G_1\sp{\ell+1}(y)}^{2p}}\leq c \, \nr{\Lip_\sigma^{2\ell p}}\dist(x,y)^{2p},
}
thus advancing the induction hypothesis for~\eqref{eq:fgmomind}.

Now turning to~\eqref{eq:indhypdw}, we first define an intermediate random field 
\ben{\label{eq:intermediate}
\widehat F\sp{\ell+1}:=W\sp\ell \sigma \bclr{G\sp\ell},
}
where we take $G\sp\ell$ to be independent of $W\sp\ell$.
By the triangle inequality,  we have 
\ben{\label{eq:2terms}
\mathsf{d}_\cW\bclr{F\sp{\ell+1},G\sp{\ell+1})} \leq \mathsf{d}_\cW\bclr{F\sp{\ell+1},\widehat F\sp{\ell+1}}+ \mathsf{d}_\cW\bclr{\widehat F\sp{\ell+1}, G\sp{\ell+1}}.
}
By definition, for the first term
\besn{\label{eq:ter1}
\mathsf{d}_\cW\bclr{F\sp{\ell+1},\widehat F\sp{\ell+1}} 
	&= \mathsf{d}_\cW\bclr{W\sp\ell \sigma \clr{F\sp\ell}, W\sp\ell \sigma \clr{G\sp\ell}}.
}
The function $\widetilde \zeta (f) =\IE\bcls{ \zeta(W\sp\ell \sigma \clr{f})}$ satisfies 
\be{
\babs{\widetilde \zeta(f)- \widetilde \zeta(g)} \leq \IE\bcls{ \norm{W\sp\ell}_{\mathrm{op}} } \, \Lip_\sigma \norm{f-g}_\infty,
}  
and so the independence of $W\sp\ell$ from $F\sp\ell$ and $G\sp\ell$ implies~\eq{eq:ter1} is upper bounded as
\ben{\label{eq:t1ltbd}
\mathsf{d}_\cW\bclr{F\sp{\ell+1},\widehat F\sp{\ell+1}}\leq \IE\bcls{ \norm{W\sp\ell}_{\mathrm{op}} } \, \Lip_\sigma \mathsf{d}_\cW\bclr{F\sp{\ell}, G\sp{\ell}}.
}

Now working on the second term of~\eq{eq:2terms}, we apply Theorem~\ref{thm:strongermetric} and bound 
$\norm{\widehat F\sp{\ell+1}-\widehat F_\eps\sp{\ell+1}}_\infty$, $\norm{G\sp{\ell+1}-G_\eps\sp{\ell+1}}_\infty$, and $\mathsf{d}_{\cF}(F\sp{\ell+1}, G\sp{\ell+1})$.
By~\eqref{eq:lipmombd} of Lemma~\ref{lem:mcmombd} with $W=W_{1,\cdot}\sp\ell$, $H=G\sp\ell$ and $m=n_\ell$, we have
\be{
\IE\bbcls{\bclr{F_1\sp{\ell+1}(x)-F_1\sp{\ell+1}(y)}^{2p}}\leq c\, \nr{\Lip_\sigma^{2p}}
\IE\bbcls{\bclr{G_1\sp{\ell}(x)-G_1\sp{\ell}(y)}^{2p}}\leq c \,\nr{ \Lip_\sigma^{2p\ell}} \dist(x,y)^{2p},
}
where the last inequality holds via the induction hypothesis~\eqref{eq:fgmomind}.
In conjunction with inequality~\eq{eq:gellp1mombd} for $G\sp{\ell+1}$, 
Lemma~\ref{lem:momepsreg} \nr{(applied after scaling by $\Lip_\sigma^{\ell}$)} now implies that
\ben{\label{eq:fgellp1inf}
\max\bclc{\IE\norm{F\sp{\ell+1}-F\sp{\ell+1}_\eps}_\infty, \IE\norm{G\sp{\ell+1}-G\sp{\ell+1}_\eps}_\infty }
\leq c \, \nr{\Lip_\sigma^\ell}\sqrt{n_{\ell+1}} \, \eps^{\frac{1}{2}\clr{1-\frac{n}{p}}} \sqrt{\log\clr{1/\eps}}.
}
Now, Lemma~\ref{lem:stnmethit} with $F=\widehat F\sp{\ell+1}$ and $H=G\sp{\ell}$, noting that $G\sp{\ell}$ is continuous with i.i.d.\ coordinate processes,  
implies
\bes{
\mathsf{d}_{\cF}\bclr{\widehat F\sp{\ell+1}, G\sp{\ell+1}} 
    &\leq
\bclr{c_w\sp\ell}^{3/2} \bclr{B\sp\ell}^{3/4} \IE\bcls{\norm{\sigma(G_1\sp{\ell})}^3} \frac{n_{\ell+1}^{3/2}}{\sqrt{n_\ell}} \\
    &\leq\nr{(1+\Lip_\sigma)^{3\ell}} \bclr{c_w\sp\ell}^{3/2} \bclr{B\sp\ell}^{3/4}\frac{n_{\ell+1}^{3/2}}{\sqrt{n_\ell}},
}
where we have used the induction hypothesis~\eqref{eq:siggellmombd} in the final inequality. Applying this inequality along with~\eqref{eq:fgellp1inf} in Theorem~\ref{thm:strongermetric} yields
\besn{\label{eq:rateimprov1}
\mathsf{d}_\cW(\widehat F\sp{\ell+1}, G\sp{\ell+1}) &\leq c\nr{(1+\Lip_\sigma)^{3\ell}}  \sqrt{n_{\ell+1}} \bbbclr{
\delta^{-2} \eps^{- 2(n+\iota)}\frac{n_{\ell+1}^{2}}{\sqrt{n_\ell}} 
+\eps^{\frac{1}{2}\clr{1-\frac{n}{p}}} \sqrt{\log\clr{1/\eps}}+\delta},
}
and choosing 
\be{
\delta=\eps^{-\frac{2}{3}(n+\iota)}\bbbclr{\frac{ n_{\ell+1}^{4}}{n_\ell}}^{1/6} \,\, \mbox{ and } \,\,
\eps=\bbbclr{\frac{n_{\ell+1}^4}{n_\ell}}^{1/(3(1-\frac{n}{p})+4(n+\iota))} 
}
gives
\be{
\mathsf{d}_\cW(\widehat F\sp{\ell+1}, G\sp{\ell+1})
    \leq c\nr{(1+\Lip_\sigma)^{3\ell}} \sqrt{n_{\ell+1}}\bbbclr{\frac{n_{\ell+1}^4}{n_\ell}}^{(1-\frac{n}{p})/({6}(1-\frac{n}{p})+{8}(n+\iota))}\sqrt{\log(n_\ell/n_{\ell+1}^4)}.
}
Using this bound and~\eqref{eq:t1ltbd} in~\eqref{eq:2terms}, and applying the induction hypothesis~\eqref{eq:indhypdw} advances the induction for~\eqref{eq:indhypdw}.

Finally, advancing the induction for~\eqref{eq:siggellmombd}, i.e., bounding $\IE\bcls{\norm{\sigma(G_1\sp{\ell+1})}^3}\leq c\nr{\, (\Lip_\sigma+1)^{3(\ell+1)}}$, follows in exactly the same way as for the base case, starting at~\eqref{eq:g2sup3mom}.
 \end{proof}

\subsection{Improved $W_1$ bounds: Proof of Theorem~\ref{thm:rateimprovment}}\label{sec:smoothmetric}
This subsection proves  Theorem~\ref{thm:rateimprovment}, showing the rate improvement under the additional assumption that $\sigma$ has three bounded derivatives. The rate improvement illustrated in Remark~\ref{rem:rate.improvement} comes from the fact that for the induction steps, we work with the smooth metric $\mathsf{d}_{\cF}$, and only smooth at the final layer, rather than at each layer of the induction in the $\mathsf{d}_{\cW}$ metric; compare~\eqref{eq:rateimprov1} and~\eqref{eq:rateimprov2}.

\begin{theorem}\label{thm:nnsmoothbd}
%In the notation just above, define for $\ell=2,\ldots, L$,
%\ba{
%\cF\sp\ell&:=\bclc{\zeta:\rC(\cM; \IR^{n_\ell})\to \IR, \sup_f\norm{D^k \zeta(f)} <\infty, k=1,2,3}, \\
%\cF\sp\ell&:= \bclc{\zeta\in \cF\sp\ell: \sup_{f} \norm{D^3 \zeta(f)}\leq1}.
%}
%Writing $B_4\sp\ell:=\IE\bcls{\abs{W_{ij}\sp\ell}^4}/(c_w\sp\ell/n_\ell)^{2}$, 

Assume that $\sigma$ has three bounded derivatives, and let the weights satisfy the moment condition in~\eqref{eq:wellmomcond}. Recalling the definition of $\beta_L$ from~\eqref{eq:betaanda},  
%\begin{align*}
%    \beta \coloneqq  \sum_{\ell=1}^{L-1}\bclr{c_w\sp\ell}^{3/2} \bclr{ B_4\sp\ell }^{3/4} %\IE\bcls{\norm{\sigma(G_1\sp\ell)}_\infty^3}
%    \frac{n_{\ell+1}^{3/2}}{\sqrt{n_\ell}}\prod_{j=\ell+1}^{L-1} a_j
%\end{align*}
%where
%\be{
%a_j:=\IE\bcls{\norm{W\sp{j}}_{\mathrm{op}}^3} \norm{\sigma\sp{3}}_\infty,
%}
%$\norm{\cdot}_{\mathrm{op}}$ denotes the operator norm with respect to Euclidean distance and $\sigma\sp{3}$ is the third derivative of $\sigma$, 
for any $L\geq2$, there exists a positive constant $c$, depending only on $(c_w\sp\ell,c_b\sp\ell,B\sp\ell)_{\ell=0}^L, n, p$, and $\norm{\sigma\sp{k}}_\infty$, $k=1,2,3$, such that $\mathsf{d}_\cF(F\sp{L},G\sp{L}) \leq c \, \beta_L$.

\end{theorem}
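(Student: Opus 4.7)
The plan is to proceed by induction on $L$, leveraging the recursive structure of the network. The key reason for the improved rate, as emphasized in Remark~\ref{rem:rate.improvement}, is that throughout the induction I will stay entirely inside the smooth metric $\mathsf{d}_\cF$, so that no smoothing prefactor $\delta^{-2}\eps^{-2(n+\iota)}$ is paid at each layer. (The single passage to $\mathsf{d}_\cW$ is done only once, at the end, in the derivation of Theorem~\ref{thm:rateimprovment} via Theorem~\ref{thm:strongermetric}.) As a preliminary reduction, the biases $b\sp\ell$ can be absorbed into the test function, exactly as at the beginning of the proof of Theorem~\ref{thm:w1bound}: given $\zeta\in\cF$, the function $\zeta'(f):=\IE[\zeta(f+b\sp{\ell})]$ inherits the derivative and Lipschitz bounds in the definition~\eqref{def:calF} of $\cF$, so one may assume $b\sp\ell\equiv0$.

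For the base case $L=2$, since $G\sp{1}=F\sp{1}$ has continuous i.i.d.\ coordinate processes, Lemma~\ref{lem:stnmethit} applies with $H=F\sp1$, $W=W\sp1$, $m=n_1$, $n=n_2$, giving a bound of order $(c_w\sp1)^{3/2}(B\sp1)^{3/4}\IE[\norm{\sigma(F_1\sp1)}_\infty^3]\,n_2^{3/2}/\sqrt{n_1}$. The moment $\IE[\norm{\sigma(F_1\sp1)}_\infty^3]$ is uniformly bounded by an absolute constant by the argument around~\eqref{eq:bdsupg1}--\eqref{eq:3momnormbd}. Since $\beta_2=n_2^{3/2}/\sqrt{n_1}$, the base case matches.

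For the induction step, assuming $\mathsf{d}_\cF(F\sp{L},G\sp{L})\leq c\beta_L$, I introduce the intermediate field $\widehat F\sp{L+1}:=W\sp{L}\sigma(G\sp{L})+b\sp{L}$, with $G\sp{L}$ independent of $W\sp{L}$ and $b\sp{L}$, in exact parallel to~\eqref{eq:intermediate}, and split via the triangle inequality
\be{
\mathsf{d}_\cF(F\sp{L+1},G\sp{L+1})\leq \mathsf{d}_\cF(F\sp{L+1},\widehat F\sp{L+1})+\mathsf{d}_\cF(\widehat F\sp{L+1},G\sp{L+1}).
}
The second term is handled directly by Lemma~\ref{lem:stnmethit} with $H=G\sp{L}$, yielding a bound of order $n_{L+1}^{3/2}/\sqrt{n_L}$, where the constant absorbs $\IE[\norm{\sigma(G_1\sp{L})}_\infty^3]$. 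This latter moment is controlled by the same modulus-of-continuity chain used in the proof of Theorem~\ref{thm:w1bound} (see the advancement of~\eqref{eq:siggellmombd}), which carries over essentially verbatim.

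For the first term, for $\zeta\in\cF$, define $\widetilde\zeta(f):=\IE[\zeta(W\sp{L}\sigma(f)+b\sp{L})]$; independence of $(W\sp L,b\sp L)$ from $(F\sp L,G\sp L)$ gives $|\IE\zeta(F\sp{L+1})-\IE\zeta(\widehat F\sp{L+1})|=|\IE\widetilde\zeta(F\sp L)-\IE\widetilde\zeta(G\sp L)|$. The crux of the argument, and the main technical obstacle, is to show that $\widetilde\zeta/K\in\cF$ for some $K\leq c\max\{1,\IE[\norm{W\sp L}_{\mathrm{op}}^3]\}$, with $c$ depending only on $\norm{\sigma\sp k}_\infty$ for $k=1,2,3$. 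Chain rule computations give
\be{
D\widetilde\zeta(f)[g]=\IE\bcls{D\zeta(W\sp{L}\sigma(f)+b\sp L)[W\sp L(\sigma'(f)\odot g)]},
}
where $\odot$ is coordinate-wise product, and an analogous but longer expression for $D^2\widetilde\zeta$ combining a $D^2\zeta$-term carrying two copies of $W\sp L$ and a $D\zeta$-term carrying $\sigma''$ and one copy of $W\sp L$. Taking operator norms yields immediately $\norm{D^k\widetilde\zeta}\lesssim \sum_{j=1}^k\IE[\norm{W\sp L}_{\mathrm{op}}^j]\,\max_i\norm{\sigma\sp i}_\infty^{j}$ for $k=1,2$. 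The delicate step is the Lipschitz estimate on $D^2\widetilde\zeta$: differencing the two pieces in $f$ and $h$ leaves contributions where $D^2\zeta$ is Lipschitz (from~\eqref{def:calF}) composed with $\sigma'$ (Lipschitz since $\sigma''$ is bounded), plus contributions where $D\zeta$ has bounded operator norm and is composed with $\sigma''$ (Lipschitz since $\sigma'''$ is bounded)---this is precisely where the assumption on three bounded derivatives is consumed. Using Jensen's inequality $\IE[\norm{W\sp L}_{\mathrm{op}}^j]\leq \IE[\norm{W\sp L}_{\mathrm{op}}^3]^{j/3}$ for $j\leq 3$, every moment is absorbed into $\max\{1,\IE[\norm{W\sp L}_{\mathrm{op}}^3]\}$. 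The induction hypothesis then gives $\mathsf{d}_\cF(F\sp{L+1},\widehat F\sp{L+1})\leq c\max\{1,\IE[\norm{W\sp L}_{\mathrm{op}}^3]\}\beta_L$, and the elementary identity $\beta_{L+1}=n_{L+1}^{3/2}/\sqrt{n_L}+\max\{1,\IE[\norm{W\sp L}_{\mathrm{op}}^3]\}\beta_L$ closes the induction.
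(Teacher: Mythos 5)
Your proof is correct and follows essentially the same route as the paper's: induction in the $\mathsf{d}_\cF$ metric, the same intermediate field $\widehat F\sp{\ell+1}=W\sp\ell\sigma(G\sp\ell)$ with triangle inequality, an application of Lemma~\ref{lem:stnmethit} for one piece, and the key step of verifying that $\widetilde\zeta(f)=\IE[\zeta(W\sp\ell\sigma(f)+b\sp\ell)]$ lies in a constant multiple of $\cF$ via chain-rule bounds in which $\sigma'''$ controls the Lipschitz constant of $D^2\widetilde\zeta$. The recursion $\beta_{L+1}=n_{L+1}^{3/2}/\sqrt{n_L}+\max\{1,\IE[\norm{W\sp L}_{\mathrm{op}}^3]\}\beta_L$ you invoke is exactly what closes the induction as in the paper.
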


\begin{proof}
The proof follows by an induction similar to that in the proof of Theorem~\ref{thm:w1bound}.
For the base case $L=2$, first note we can again set $b\sp2 =0$,
since if $\zeta\in \cF$, then straightforward considerations imply
$\widetilde \zeta(f):=\IE\cls{\zeta(f+b\sp2)}\in \cF$. Thus, for $\widetilde G\sp2-b\sp2$ and $\widetilde F\sp2=F\sp2-b\sp2$ we have 
\be{
\babs{\IE\cls{\zeta(F\sp2)}-\IE\bcls{\zeta(G\sp2)}}=\babs{\IE\cls{\widetilde \zeta(\widetilde F\sp2)}-\IE\bcls{\widetilde \zeta(\widetilde G\sp2)}},
}
and so it is enough to bound the right-hand side for generic $\widetilde \zeta \in \cF$.
With this simplification, we can apply 
Lemma~\ref{lem:stnmethit} with $m=n_1$ and $n=n_2$ to find
\be{
\mathsf{d}_{\cF}\bclr{F\sp2, G\sp2}\leq (c_w\sp1)^{3/2} (B\sp1)^{3/4} \IE\bcls{\norm{\sigma(G_1\sp1)}_\infty^3} \frac{n_2^{3/2}}{\sqrt{n_1}} \leq c  \frac{n_2^{3/2}}{\sqrt{n_1}},
}
where the last inequality follows from~\eqref{eq:3momnormbd}, which states  $\IE\bcls{\norm{\sigma(G_1\sp1)}_\infty^3}\leq c$.

To advance the induction, assume the bound on $\mathsf{d}_\cF(F\sp\ell, G\sp\ell)$.
In exactly the same way as above, we can assume $b\sp\ell=0$. 
Now, recall that in~\eqref{eq:intermediate}, we defined the intermediate random field 
\be{
\widehat F\sp{\ell+1}:=W\sp\ell \sigma \bclr{G\sp\ell},
}
where $G\sp\ell$ is independent of $W\sp\ell$.
The triangle inequality, as before, yields
\be{%\label{eq:2terms1}
\mathsf{d}_\cF\bclr{F\sp{\ell+1},G\sp{\ell+1})} \leq \mathsf{d}_\cF\bclr{F\sp{\ell+1},\widehat F\sp{\ell+1}}+ \mathsf{d}_\cF\bclr{\widehat F\sp{\ell+1}, G\sp{\ell+1}}.
}
%Working on the first term, we then have 
%\besn{\label{eq:ter11}
%\mathsf{d}_\cW\bclr{F\sp{\ell+1},\widehat F\sp{\ell+1}} 
%&= \mathsf{d}_\cW\bclr{W\sp\ell \sigma \clr{F\sp\ell}, W\sp\ell \sigma \clr{G\sp\ell}}.
%}
and we again define the function $\widetilde \zeta (f) =\IE\bcls{ \zeta(W\sp\ell \sigma \clr{f})}$.
We need to argue that up to a constant factor, $\widetilde \zeta\in\cF$. Starting with the first derivative and denoting component-wise (Hadamard) multiplication by $\circ$, we first have
\ba{
\babs{\widetilde \zeta(f+g)-\IE\bcls{ \zeta(W\sp\ell\bclr{ \sigma \clr{f}+\sigma'(f)\circ g}}} 
    &\leq \sup_{h} \norm{D\zeta(h)} \IE \bnorm{W\sp\ell\bclr{\sigma(f+g)-\sigma(f)-\sigma'(f)\circ g}}\\
    &\leq \sup_{h} \norm{D\zeta(h)} \IE \norm{W\sp\ell}_{\mathrm{op}} \norm{\sigma''}_\infty \norm{g}_\infty^2.
}
Combining the above display with a direct Taylor-like computation, we next have that
\ba{
\widetilde \zeta (f+&g)- \widetilde \zeta(f) \\
   & =\IE \int_{[0,1]^2} D^2\zeta \bclr{W\sp\ell(\sigma(f)+s \, t \,\sigma'(f)\circ g)}\bcls{W\sp\ell \bclr{\sigma'(f)\circ g},W\sp\ell \bclr{\sigma'(f)\circ g}} \mathrm{Leb}(\mathrm{d}s,\mathrm{d}t) \\
   &\qquad + \IE \bbcls{D\zeta(W\sp\ell \sigma \clr{f})\bcls{W\sp\ell \bclr{\sigma'(f)\circ g}}} +\bigo\bclr{\norm{g}^2_\infty} \\
    &=\IE \bbcls{D\zeta(W\sp\ell \sigma \clr{f})\bcls{W\sp\ell \bclr{\sigma'(f)\circ g}}} +\bigo\bclr{\norm{g}^2_\infty},
}
so that
\be{
D\widetilde \zeta (f)[g]=\IE \bbcls{D\zeta(W\sp\ell \sigma \clr{f})\bcls{W\sp\ell \bclr{\sigma'(f)\circ g}}}.
}
Since $\sup_h \norm{D \zeta (h)}\leq 1$, it follows that
\be{
\sup_f \norm{D \widetilde \zeta(f)}\leq  \norm{\sigma'}_\infty \IE \norm{W\sp\ell}_{\mathrm{op}}.
}
Similar but more onerous computations show
\ba{
D^2 \widetilde \zeta(f)[g\sp1, g\sp2] 
    &= \IE \bbcls{D^2 \zeta(W\sp\ell\sigma(f))\bcls{W\sp\ell \bclr{\sigma'(f)\circ g\sp1},W\sp\ell \bclr{\sigma'(f)\circ g\sp2}} }\\
    &\qquad+ \IE \bbcls{D\zeta(W\sp\ell \sigma \clr{f})\bcls{W\sp\ell \bclr{\sigma''(f)\circ g\sp1 \circ g\sp2}}},
}
so that 
\be{
\sup_f \norm{D^2 \widetilde \zeta(f)}\leq  \norm{\sigma'}^2_\infty \IE \bbcls{ \norm{W\sp\ell}^2_{\mathrm{op}}}+\norm{\sigma''}_\infty \IE \norm{W\sp\ell}_{\mathrm{op}} <\infty.
}
Finally, some straightforward but space-consuming manipulations, using in particular that
\be{
\abs{D^2 \zeta(h)[g\sp1, g\sp2]}\leq 3 \norm{g\sp1}_\infty\norm{g\sp2}_\infty \norm{D^2 \zeta(h)},
}
from \cite[Lemma~2.4]{Barbour2021a}, imply that
\be{
\frac{\bnorm {D^2 \widetilde \zeta(h)-D^2 \widetilde \zeta(f)}}{\norm{f-h}}\leq c\, \max\bclc{1, \IE\bcls{\norm{W\sp\ell}_{\mathrm{op}}^3}}.
}
Hence, using the independence of $W\sp\ell$ with $F\sp\ell$ and $G\sp\ell$ we have
\be{%\label{eq:t1ltbd}
\mathsf{d}_\cF\bclr{F\sp{\ell+1},\widehat F\sp{\ell+1}}\leq c\, \max\bclc{1, \IE\bcls{\norm{W\sp\ell}_{\mathrm{op}}^3}}  \mathsf{d}_\cF\bclr{F\sp{\ell}, G\sp{\ell}},
}
and the proof now follows as that for Theorem~\ref{thm:w1bound},
mutatis mutandis.
\end{proof}

We are now ready to prove Theorem~\ref{thm:rateimprovment}, using Theorem~\ref{thm:strongermetric}. Compared to the proof of Theorem~\ref{thm:w1bound}, the specific choice of the smoothing and regularization terms, $\eps$ and $\delta$ are different, resulting in the required rate improvement.
\begin{proof}[\textbf{Proof of Theorem~\ref{thm:rateimprovment}}] 
    We apply Theorem~\ref{thm:strongermetric}, with $F=F\sp{L}$ and $W=G\sp{L}$, and hence with $d=n_L$.
    Applying Lemma~\ref{lem:momepsreg}, using induction with~\eqref{eq:lipmombd} and~\eqref{eq:F2mcmombd}, we have that
\bes{%\label{eq:epsstsmooth}
\IE \norm{F\sp{L}- F\sp{L}_\eps}_\infty &\leq  c  \, \sqrt{n_{L}} \, \eps^{\frac{1}{2}\clr{1-\frac{n}{p}}} \sqrt{\log\clr{1/\eps}},
}
and the same bound also holds for $\IE \norm{G\sp{L}- G\sp{L}_\eps}_\infty$. 
From Theorem~\ref{thm:nnsmoothbd}, we have
\be{
\mathsf{d}_\cF(F\sp{L},G\sp{L})\leq c \, \beta_L.
}

Putting everything together, we have
\begin{align}\label{eq:rateimprov2}
\mathsf{d}_\cW(F\sp{L},G\sp{L})\leq    c \sqrt{n_L} \left( \sqrt{n_L}\, \beta_L \,\delta^{-2} \eps^{-2 (n+\iota)}  +  \eps^{\frac{1}{2}\clr{1-\frac{n}{p}}} \sqrt{\log\clr{1/\eps}}   + \delta \right).
\end{align}
%Defining 
%\begin{align*}
 %   \beta \coloneqq  \left(\sum_{\ell=1}^{L-1}\bclr{c_w\sp\ell}^{3/2} \bclr{ B_4\sp\ell }^{3/4} \IE\bcls{\norm{\sigma(G_1\sp\ell)}_\infty^3} \frac{n_{\ell+1}^{3/2}}{\sqrt{n_\ell}}\prod_{j=\ell+1}^{L-1} a_j\right)^2
%\end{align*}
Picking $\eps$ and $\delta$ as
\begin{align*}
   \delta =\eps^{-2(n+\iota)/3} (n_L\beta_L^2)^{1/6} \qquad \eps = (n_L\beta_L^2)^{1/(3(1-\frac{n}{p})+4(n+\iota))},  
\end{align*}
we obtain the desired result.  
\end{proof}

\bibliographystyle{abbrvnat}
\bibliography{ref}

\begin{thebibliography}{64}
\providecommand{\natexlab}[1]{#1}
\providecommand{\url}[1]{\texttt{#1}}
\expandafter\ifx\csname urlstyle\endcsname\relax
  \providecommand{\doi}[1]{doi: #1}\else
  \providecommand{\doi}{doi: \begingroup \urlstyle{rm}\Url}\fi

\bibitem[Abbe et~al.(2022)Abbe, Adsera, and Misiakiewicz]{abbe2022merged}
E.~Abbe, E.~B. Adsera, and T.~Misiakiewicz.
\newblock The merged-staircase property: {A} necessary and nearly sufficient
  condition for {SGD} learning of sparse functions on two-layer neural
  networks.
\newblock In \emph{Conference on Learning Theory}, pages 4782--4887. PMLR,
  2022.

\bibitem[Adler and Taylor(2007)]{adler2007random}
R.~J. Adler and J.~E. Taylor.
\newblock \emph{Random fields and geometry}, volume~80.
\newblock Springer, 2007.

\bibitem[Arras and Houdr{\'e}(2019)]{arras2022somea}
B.~Arras and C.~Houdr{\'e}.
\newblock {On Stein's method for multivariate self-decomposable laws}.
\newblock \emph{Electron. J. Probab.}, 2019.

\bibitem[Arras and Houdr{\'e}(2022)]{arras2022someb}
B.~Arras and C.~Houdr{\'e}.
\newblock On some operators associated with non-degenerate symmetric
  $\alpha$-stable probability measures.
\newblock \emph{Potential Anal.}, pages 1--52, 2022.

\bibitem[Ba et~al.(2022)Ba, Erdogdu, Suzuki, Wang, Wu, and
  Yang]{ba2022highdimensional}
J.~Ba, M.~A. Erdogdu, T.~Suzuki, Z.~Wang, D.~Wu, and G.~Yang.
\newblock {High-dimensional asymptotics of feature learning: How one gradient
  step improves the representation}.
\newblock In \emph{Advances in Neural Information Processing Systems}, 2022.

\bibitem[Bahri and Hanin(2023)]{bahri2023houches}
Y.~Bahri and B.~Hanin.
\newblock {Les Houches Lectures on Deep Learning at Large \& Infinite Width}.
\newblock \emph{arXiv preprint arXiv:2309.01592}, 2023.

\bibitem[Bakry et~al.(2014)Bakry, Gentil, and Ledoux]{bakry2014analysis}
D.~Bakry, I.~Gentil, and M.~Ledoux.
\newblock \emph{Analysis and geometry of {M}arkov diffusion operators}, volume
  103.
\newblock Springer, 2014.

\bibitem[Balasubramanian et~al.(2023)Balasubramanian, Ghosal, and
  He]{balasubramanian2023high}
K.~Balasubramanian, P.~Ghosal, and Y.~He.
\newblock High-dimensional scaling limits and fluctuations of online
  least-squares {SGD} with smooth covariance.
\newblock \emph{arXiv preprint arXiv:2304.00707}, 2023.

\bibitem[Barbour(1990)]{barbour1990stein}
A.~D. Barbour.
\newblock Stein's method for diffusion approximations.
\newblock \emph{Probab. Theory Relat. Fields}, 84\penalty0 (3):\penalty0
  297--322, 1990.

\bibitem[Barbour et~al.(2023)Barbour, Ross, and Zheng]{Barbour2021a}
A.~D. Barbour, N.~Ross, and G.~Zheng.
\newblock Stein's method, {G}aussian processes and {P}alm measures, with
  applications to queueing.
\newblock \emph{Ann. Appl. Probab.}, 33\penalty0 (5), 2023.

\bibitem[Barbour et~al.(2024)Barbour, Ross, and Zheng]{Barbour2021}
A.~D. Barbour, N.~Ross, and G.~Zheng.
\newblock Stein's method, smoothing and functional approximation.
\newblock \emph{Electron. J. Probab.}, 29:\penalty0 Paper No. 20, 29 pp, 2024.

\bibitem[Basteri and Trevisan(2022)]{Basteri2022}
A.~Basteri and D.~Trevisan.
\newblock Quantitative {G}aussian approximation of randomly initialized deep
  neural networks.
\newblock \emph{arXiv preprint arXiv:2203.07379}, 2022.

\bibitem[Benveniste et~al.(2012)Benveniste, M{\'e}tivier, and
  Priouret]{benveniste2012adaptive}
A.~Benveniste, M.~M{\'e}tivier, and P.~Priouret.
\newblock \emph{Adaptive algorithms and stochastic approximations}, volume~22.
\newblock Springer Science \& Business Media, 2012.

\bibitem[Bordino et~al.(2023{\natexlab{a}})Bordino, Favaro, and
  Fortini]{Bordino2022}
A.~Bordino, S.~Favaro, and S.~Fortini.
\newblock Infinitely wide limits for deep stable neural networks: sub-linear,
  linear and super-linear activation functions.
\newblock \emph{Transactions on Machine Learning Research}, 2023{\natexlab{a}}.

\bibitem[Bordino et~al.(2023{\natexlab{b}})Bordino, Favaro, and
  Fortini]{bordino2023}
A.~Bordino, S.~Favaro, and S.~Fortini.
\newblock {Non-asymptotic approximations of Gaussian neural networks via
  second-order Poincar\'e inequalities}.
\newblock \emph{arXiv preprint arXiv:2304.04010}, 2023{\natexlab{b}}.

\bibitem[Bourguin and Campese(2020)]{bourguin2020approximation}
S.~Bourguin and S.~Campese.
\newblock {Approximation of Hilbert-valued {G}aussians on Dirichlet
  structures}.
\newblock \emph{Electron. J. Probab.}, 25:\penalty0 30, 2020.

\bibitem[Burago et~al.(2019)Burago, Ivanov, and Kurylev]{burago2019spectral}
D.~Burago, S.~Ivanov, and Y.~Kurylev.
\newblock Spectral stability of metric-measure {L}aplacians.
\newblock \emph{Isr. J. Math.}, 232\penalty0 (1):\penalty0 125--158, 2019.

\bibitem[Chatterjee(2009)]{chatterjee2009fluctuations}
S.~Chatterjee.
\newblock Fluctuations of eigenvalues and second order {P}oincar\'e
  inequalities.
\newblock \emph{Probab. Theory Relat. Fields}, 143\penalty0 (1-2):\penalty0
  1--40, 2009.

\bibitem[Chen et~al.(2011)Chen, Goldstein, and Shao]{chen2011normal}
L.~H. Chen, L.~Goldstein, and Q.-M. Shao.
\newblock \emph{Normal approximation by {S}tein's method}.
\newblock Springer, 2011.

\bibitem[Chen et~al.(2023)Chen, Nourdin, Xu, and Yang]{chen2023multivariate}
P.~Chen, I.~Nourdin, L.~Xu, and X.~Yang.
\newblock {Multivariate stable approximation by Stein's Method}.
\newblock \emph{J. Theor. Probab.}, pages 1--43, 2023.

\bibitem[Chen et~al.(2020)Chen, Rotskoff, Bruna, and
  Vanden-Eijnden]{chen2020dynamical}
Z.~Chen, G.~Rotskoff, J.~Bruna, and E.~Vanden-Eijnden.
\newblock A dynamical central limit theorem for shallow neural networks.
\newblock In \emph{Advances in Neural Information Processing Systems},
  volume~33, 2020.

\bibitem[Coutin and Decreusefond(2013)]{coutin2013stein}
L.~Coutin and L.~Decreusefond.
\newblock Stein's method for {B}rownian approximations.
\newblock \emph{Communications on Stochastic Analysis}, 7\penalty0
  (3):\penalty0 1, 2013.

\bibitem[Coutin and Decreusefond(2020)]{coutin2020stein}
L.~Coutin and L.~Decreusefond.
\newblock Stein's method for rough paths.
\newblock \emph{Potential Anal.}, 53\penalty0 (2):\penalty0 387--406, 2020.

\bibitem[Dai and Xu(2013)]{dai2013approximation}
F.~Dai and Y.~Xu.
\newblock \emph{Approximation theory and harmonic analysis on spheres and
  balls}, volume~23.
\newblock Springer, 2013.

\bibitem[Damian et~al.(2022)Damian, Lee, and Soltanolkotabi]{damian2022neural}
A.~Damian, J.~Lee, and M.~Soltanolkotabi.
\newblock Neural networks can learn representations with gradient descent.
\newblock In \emph{Conference on Learning Theory}, pages 5413--5452. PMLR,
  2022.

\bibitem[de~G.~Matthews et~al.(2018)de~G.~Matthews, Hron, Rowland, Turner, and
  Ghahramani]{Matthews2018}
A.~G. de~G.~Matthews, J.~Hron, M.~Rowland, R.~E. Turner, and Z.~Ghahramani.
\newblock Gaussian process behaviour in wide deep neural networks.
\newblock In \emph{International Conference on Learning Representations}, 2018.

\bibitem[Der and Lee(2005)]{der2005beyond}
R.~Der and D.~Lee.
\newblock {Beyond {G}aussian processes: On the distributions of infinite
  networks}.
\newblock In \emph{Advances in Neural Information Processing Systems},
  volume~18, 2005.

\bibitem[Dobler and Kasprzak(2021)]{dobler2021stein}
C.~Dobler and M.~J. Kasprzak.
\newblock Stein's method of exchangeable pairs in multivariate functional
  approximations.
\newblock \emph{Electron. J. Probab.}, 26:\penalty0 1--50, 2021.

\bibitem[Dudley(2018)]{Dudley2002}
R.~M. Dudley.
\newblock \emph{Real analysis and probability}.
\newblock CRC Press, 2018.

\bibitem[Eldan et~al.(2021)Eldan, Mikulincer, and Schramm]{Eldan2021}
R.~Eldan, D.~Mikulincer, and T.~Schramm.
\newblock Non-asymptotic approximations of neural networks by {G}aussian
  processes.
\newblock In \emph{Conference on Learning Theory}, pages 1754--1775. PMLR,
  2021.

\bibitem[Favaro et~al.(2023{\natexlab{a}})Favaro, Fortini, and
  Peluchetti]{favaro2023deep}
S.~Favaro, S.~Fortini, and S.~Peluchetti.
\newblock Deep stable neural networks: {L}arge-width asymptotics and
  convergence rates.
\newblock \emph{Bernoulli}, 29\penalty0 (3):\penalty0 2574--2597,
  2023{\natexlab{a}}.

\bibitem[Favaro et~al.(2023{\natexlab{b}})Favaro, Hanin, Marinucci, Nourdin,
  and Peccati]{favaro2023quantitative}
S.~Favaro, B.~Hanin, D.~Marinucci, I.~Nourdin, and G.~Peccati.
\newblock Quantitative {CLT}s in deep neural networks.
\newblock \emph{arXiv preprint arXiv:2307.06092}, 2023{\natexlab{b}}.

\bibitem[Fernique(1970)]{fernique1970integrabilite}
X.~Fernique.
\newblock Int{\'e}grabilit{\'e} des vecteurs {G}aussiens.
\newblock \emph{CR Acad. Sci. Paris Serie A}, 270:\penalty0 1698--1699, 1970.

\bibitem[Fortuin et~al.(2022)Fortuin, Garriga-Alonso, Ober, Wenzel, Ratsch,
  Turner, van~der Wilk, and Aitchison]{fortuin2022bayesian}
V.~Fortuin, A.~Garriga-Alonso, S.~W. Ober, F.~Wenzel, G.~Ratsch, R.~E. Turner,
  M.~van~der Wilk, and L.~Aitchison.
\newblock {B}ayesian neural network priors revisited.
\newblock In \emph{International Conference on Learning Representations}, 2022.

\bibitem[Gan and Ross(2021)]{Gan2021}
H.~L. Gan and N.~Ross.
\newblock Stein's method for the {P}oisson-{D}irichlet distribution and the
  {E}wens {S}ampling {F}ormula, with applications to {W}right-{F}isher models.
\newblock \emph{Ann. Appl. Probab.}, 31\penalty0 (2):\penalty0 625 -- 667,
  2021.

\bibitem[Golikov and Yang(2022)]{golikov2022nongaussian}
E.~Golikov and G.~Yang.
\newblock {Non-Gaussian tensor programs}.
\newblock In \emph{Advances in Neural Information Processing Systems},
  volume~35, 2022.

\bibitem[Grieser(2002)]{Grieser2002}
D.~Grieser.
\newblock Uniform bounds for eigenfunctions of the {L}aplacian on manifolds
  with boundary.
\newblock \emph{Comm. Partial Differential Equations}, 27\penalty0
  (7-8):\penalty0 1283--1299, 2002.

\bibitem[Grigor'yan(2009)]{Grigoryan2009}
A.~Grigor'yan.
\newblock \emph{Heat kernel and analysis on manifolds}, volume~47 of
  \emph{AMS/IP Studies in Advanced Mathematics}.
\newblock American Mathematical Society, Providence, RI; International Press,
  Boston, MA, 2009.

\bibitem[Hanin(2022)]{Hanin2022}
B.~Hanin.
\newblock Correlation functions in random fully connected neural networks at
  finite width.
\newblock \emph{arXiv preprint arXiv:2204.01058v1}, 2022.

\bibitem[Hanin(2023)]{Hanin2021}
B.~Hanin.
\newblock Random neural networks in the infinite width limit as {G}aussian
  processes.
\newblock \emph{Ann. Appl. Probab.}, 33\penalty0 (6A):\penalty0 4798--4819,
  2023.

\bibitem[Jung et~al.(2023)Jung, Lee, Lee, and Yang]{jung2021alpha}
P.~Jung, H.~Lee, J.~Lee, and H.~Yang.
\newblock {$\alpha$}-stable convergence of heavy-/light-tailed infinitely wide
  neural networks.
\newblock \emph{Adv. in Appl. Probab.}, 55\penalty0 (4):\penalty0 1415--1441,
  2023.

\bibitem[Kakutani(1948)]{kakutani1948equivalence}
S.~Kakutani.
\newblock On equivalence of infinite product measures.
\newblock \emph{Ann. Math.}, pages 214--224, 1948.

\bibitem[Kasprzak(2020{\natexlab{a}})]{kasprzak2020steina}
M.~J. Kasprzak.
\newblock Stein's method for multivariate {B}rownian approximations of sums
  under dependence.
\newblock \emph{Stochastic Processes Appl.}, 130\penalty0 (8):\penalty0
  4927--4967, 2020{\natexlab{a}}.

\bibitem[Kasprzak(2020{\natexlab{b}})]{kasprzak2020steinb}
M.~J. Kasprzak.
\newblock Functional approximations via {S}tein's method of exchangeable pairs.
\newblock \emph{Ann. Inst. Henri Poincar\'{e} Probab. Stat.}, 56\penalty0
  (4):\penalty0 2540--2564, 2020{\natexlab{b}}.

\bibitem[Kasprzak et~al.(2017)Kasprzak, Duncan, and Vollmer]{Kasprzak2017}
M.~J. Kasprzak, A.~B. Duncan, and S.~J. Vollmer.
\newblock Note on {A}. {B}arbour's paper on {S}tein's method for diffusion
  approximations.
\newblock \emph{Electron. Commun. Probab.}, 22, 2017.

\bibitem[Klukowski(2022)]{Klukowski2022}
A.~Klukowski.
\newblock Rate of convergence of polynomial networks to {G}aussian processes.
\newblock In \emph{Conference on Learning Theory}, pages 701--722. PMLR, 2022.

\bibitem[Lee et~al.(2023)Lee, Ayed, Jung, Lee, Yang, and Caron]{lee2022deep}
H.~Lee, F.~Ayed, P.~Jung, J.~Lee, H.~Yang, and F.~Caron.
\newblock Deep neural networks with dependent weights: {G}aussian process
  mixture limit, heavy tails, sparsity and compressibility.
\newblock \emph{Journal of Machine Learning Research}, 24\penalty0
  (289):\penalty0 1--78, 2023.

\bibitem[{L}ee et~al.(2018){L}ee, {S}ohl {D}ickstein, Pennington, Novak,
  Schoenholz, and Bahri]{lee2018deep}
J.~{L}ee, J.~{S}ohl {D}ickstein, J.~Pennington, R.~Novak, S.~Schoenholz, and
  Y.~Bahri.
\newblock Deep neural networks as {G}aussian processes.
\newblock In \emph{International Conference on Learning Representations}, 2018.

\bibitem[Li et~al.(2022)Li, Nica, and Roy]{Li2022}
M.~Li, M.~Nica, and D.~Roy.
\newblock The neural covariance {SDE}: Shaped infinite depth-and-width networks
  at initialization.
\newblock In \emph{Advances in Neural Information Processing Systems},
  volume~35, 2022.

\bibitem[Neal(1996)]{Neal1996}
R.~M. Neal.
\newblock \emph{Bayesian learning for neural networks}, volume 118.
\newblock Springer Science \& Business Media, 1996.

\bibitem[Nourdin and Peccati(2012)]{nourdin2012normal}
I.~Nourdin and G.~Peccati.
\newblock \emph{Normal approximations with {M}alliavin calculus: {F}rom
  {S}tein's method to universality}, volume 192.
\newblock Cambridge University Press, 2012.

\bibitem[Nowak et~al.(2019)Nowak, Sj{\"o}gren, and Szarek]{Nowak2019}
A.~Nowak, P.~Sj{\"o}gren, and T.~Z. Szarek.
\newblock Sharp estimates of the spherical heat kernel.
\newblock \emph{J. Math. Pures Appl.}, 129:\penalty0 23--33, 2019.

\bibitem[Pollard(1984)]{pollard2012convergence}
D.~Pollard.
\newblock \emph{Convergence of stochastic processes}.
\newblock Springer Science \& Business Media, 1984.

\bibitem[Rai{\v{c}}(2018)]{raivc2018multivariate}
M.~Rai{\v{c}}.
\newblock A multivariate central limit theorem for {L}ipschitz and smooth test
  functions.
\newblock \emph{arXiv preprint arXiv:1812.08268}, 2018.

\bibitem[Ross(2011)]{ross2011fundamentals}
N.~Ross.
\newblock Fundamentals of {S}tein's method.
\newblock \emph{Probab. Surv.}, 8:\penalty0 210--293, 2011.

\bibitem[Rotskoff and Vanden-Eijnden(2022)]{rotskoff2022trainability}
G.~Rotskoff and E.~Vanden-Eijnden.
\newblock Trainability and accuracy of artificial neural networks: An
  interacting particle system approach.
\newblock \emph{Commun. Pure Appl. Math.}, 75\penalty0 (9):\penalty0
  1889--1935, 2022.

\bibitem[Shih(2011)]{shih2011steins}
H.-H. Shih.
\newblock On {S}tein's method for infinite-dimensional {G}aussian approximation
  in abstract {W}iener spaces.
\newblock \emph{J. Funct. Anal.}, 261\penalty0 (5):\penalty0 1236--1283, 2011.

\bibitem[Sirignano and Spiliopoulos(2020)]{sirignano2020mean}
J.~Sirignano and K.~Spiliopoulos.
\newblock Mean field analysis of neural networks: {A} central limit theorem.
\newblock \emph{Stochastic Processes Appl.}, 130\penalty0 (3):\penalty0
  1820--1852, 2020.

\bibitem[Sturm(1998)]{sturm1998diffusion}
K.-T. Sturm.
\newblock Diffusion processes and heat kernels on metric spaces.
\newblock \emph{Ann. Probab.}, 26\penalty0 (1):\penalty0 1--55, 1998.

\bibitem[Vershynin(2018)]{Vershynin2018}
R.~Vershynin.
\newblock \emph{High-dimensional probability: An introduction with applications
  in data science}, volume~47.
\newblock Cambridge university press, 2018.

\bibitem[Vidotto(2020)]{vidotto2020improved}
A.~Vidotto.
\newblock {An improved second-order Poincar\'e inequality for functionals of
  Gaussian fields}.
\newblock \emph{J. Theor. Probab.}, 33\penalty0 (1):\penalty0 396--427, 2020.

\bibitem[Xu(2019)]{xu2019stable}
L.~Xu.
\newblock {Approximation of stable law in Wasserstein-1 distance by Stein's
  method}.
\newblock \emph{Ann. Appl. Probab}, 29\penalty0 (1):\penalty0 458--504, 2019.

\bibitem[Yang(2019)]{yang2019wide}
G.~Yang.
\newblock Wide feedforward or recurrent neural networks of any architecture are
  {G}aussian processes.
\newblock In \emph{Advances in Neural Information Processing Systems},
  volume~32, 2019.

\bibitem[Zelditch(2017)]{zelditch2017eigenfunctions}
S.~Zelditch.
\newblock \emph{{Eigenfunctions of the Laplacian on a Riemannian manifold}},
  volume 125.
\newblock American Mathematical Soc., 2017.

\end{thebibliography}
\end{document}